\def\ds{\displaystyle}
\def\nin{\not \in}
\def\n{\noindent}
\def\R{\mathbb{R}}
\def\Rpq{\mathbb{R}^{p,q}}
\def\v{\varphi}
\def\e{\varepsilon}
\def\s{\sigma}
\def\O{\Omega}
\def\d{\delta}
\def\T{\mathcal{T}}
\def\P{\mathcal{P}}
\def\l{\lambda}
\def\X{\mathcal{X}}
\def\Y{\mathcal{Y}}
\def\V{\mathcal{V}}
\def\S{\mathcal{S}}
\def\E{\mathbb{E}}
\def\calZ{\mathcal{Z}}
\def\a{\alpha}
\def\b{\beta}
\def\g{\gamma}
\def\N{\mathcal{N}}
\def\la{\langle}
\def\ra{\rangle}
\def\dx{d_{\X}}
\def\dy{d_{\Y}}
\def\calN{\mathcal{N}}
\def\scrU{\mathscr{U}}
\def\scrV{\mathscr{V}}
\def\scrW{\mathscr{W}}
\def\G{\Gamma}
\def\dn{d_{\mathcal{N}}}
\def\D{\mathscr{D}}
\newtheorem{theorem}{Theorem}
\newtheorem{lemma}[theorem]{Lemma}
\newtheorem{corollary}[theorem]{Corollary}
\newtheorem{prop}[theorem]{Proposition}
\newtheorem*{thm}{Main Theorem}
\newtheorem*{them}{Theorem}
\theoremstyle{definition}
\newtheorem{remark}{Remark}
\newtheorem*{notation}{Notation}
\theoremstyle{remark}
\numberwithin{equation}{section}
\begin{document}

\title{On the isometric embedding problem for length metric spaces}

\author{Barry Minemyer}
\address{Department of Mathematics, The Ohio State University, Columbus, Ohio 43210}
\email{minemyer.1@osu.edu}



\subjclass[2010]{Primary 51F99, 53B21, 53B30; Secondary 52A38, 52B11, 57Q35}

\date{\today.}


\keywords{length metric space, Isometric embedding, Lorentzian space, Euclidean polyhedra, energy functional, geodesic}

\begin{abstract}
We prove that every proper $n$-dimensional length metric space admits an ``approximate isometric embedding" into Lorentzian space $\R^{3n+6,1}$.  
By an ``approximate isometric embedding" we mean an embedding which preserves the energy functional on a prescribed set of geodesics connecting a dense set of points.  
\end{abstract}

\maketitle



\section{Introduction}\label{section 1}

In the 1950's John Nash famously proved that every Riemannian manifold admits an isometric embedding into Euclidean space (\cite{Nash1}, \cite{Nash2}).  
Since then, many mathematicians have attempted to improve on Nash's work in many different ways.  
Maybe the most obvious direction to improve on Nash's results is to attempt to lower the dimensionality of the target Euclidean Space.  
This has been done in several settings, most notably by Kuiper \cite{Kuiper}, Greene \cite{Greene}, Gromov \cite{GR}, \cite{Gromov PDR}, and G\"{u}nther \cite{Gunther}.  

The next natural avenue for generalization would be to look at a larger class of source spaces.  
That is, to look at spaces which are more general than that of a Riemannian manifold.  
The first results in this direction, proved independently by Greene \cite{Greene} and Gromov and Rokhlin \cite{GR}, dealt with isometric embeddings of differentiable manifolds endowed with indefinite metric tensors.
Of course, in this setting one needs to enlarge the class of target spaces to that of Minkowski space $\R^{p,q}$ to allow for metrics which are not positive definite (or possibly even degenerate).
The study of generalizing Nash's results to the class of Finsler manifolds (and with Banach spaces as the target) was taken up by Burago and Ivanov in \cite{BI}.

Another class of ``generalizations" that have developed have been in the setting of ``(indefinite) metric polyhedra".  
Define an {\it indefinite metric polyhedron} to be a triple $(\X, \T, G)$ where $\X$ is a topological space, $\T$ is a locally-finite (simplicial) triangulation of $\X$, and for all $\s \in \T$ a $k$-dimensional simplex, $G(\s)$ is a quadratic form on $\R^k$ which restricts to $G(\s')$ for all $\s' < \s$.  
Call $\X := (\X, \T, G)$ a {\it Euclidean polyhedron} if $G(\s)$ is positive-definite for all $\s \in \T$, so that Euclidean polyhedra are in some sense a combinatorial analogue to Riemannian manifolds.
There are various results concerning continuous and piecewise-linear isometries and isometric embeddings of Euclidean polyhedra into Euclidean space, as well as piecewise-linear and simplicial isometric embeddings of indefinite metric polyhedra into Minkowski space.  
These results show a surprising similarity to the theorems about differentiable manifolds mentioned above, and are due to Zalgaller \cite{Zalgaller}, Burago and Zalgaller \cite{BZ}, Krat \cite{Krat}, Akopyan \cite{Akopyan}, the author \cite{Minemyer1}, \cite{Minemyer2}, and Galashin and Zolotov \cite{GZ}.  
Of course though, these results are not really generalizations of the Nash isometric embedding theorems since clearly not all Riemannian metrics will be piecewise linear on some simplicial triangulation of the manifold.  

A {\it length metric space} is a metric space where the distance between any two points is equal to the infimum of the lengths of paths between those two points.
Every Riemannian manifold has a natural length metric associated to it via calculating the lengths of paths with the intrinsic Riemannian metric.
But there are many families of length metric spaces that are not (necessarily) Riemannian manifolds, some examples of which are Finsler manifolds, CAT(0) spaces, Alexandrov spaces with curvature bounded below, and the aforementioned Euclidean polyhedra.
The original motivation of this research was to generalize Nash's isometric embedding results to the much larger class of (proper and finite dimensional) length metric spaces. 
Unfortunately, in \cite{Le Donne} Le Donne shows that there is not much hope for such a result with the following Theorem.

\vskip 10pt

\begin{them}[Le Donne \cite{Le Donne}]\label{theorem:  Le Donne}
Any Finsler manifold which is not Riemannian does not admit a path isometric embedding into Euclidean space.  
\end{them}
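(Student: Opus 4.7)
The plan is to combine classical regularity theorems for Lipschitz maps with a pointwise linear-algebra observation. Let $(M,F)$ be a Finsler manifold with induced length metric $d_F$, and suppose $f:(M,d_F)\to\R^N$ is a path isometric embedding. I first claim that $f$ is locally Lipschitz. Indeed, for any path $\g$ from $p$ to $q$ in $M$,
\[ |f(p)-f(q)|\le \operatorname{length}(f\circ\g) = \operatorname{length}_F(\g), \]
and taking the infimum over $\g$ gives $|f(p)-f(q)|\le d_F(p,q)$. In a coordinate chart on $M$, $d_F$ is bi-Lipschitz equivalent to the ambient Euclidean distance, so $f$ is locally Lipschitz as a map between open subsets of Euclidean spaces. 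By Rademacher's theorem, $f$ is then differentiable at almost every point of $M$.

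Next I would fix a point $p$ at which $f$ is differentiable and any tangent vector $v\in T_pM$. Writing $\g_s(t)=p+tv$ in a chart for $t\in[0,s]$, continuity of the Finsler norm gives $\operatorname{length}_F(\g_s)=sF_p(v)+o(s)$, while differentiability of $f$ at $p$ combined with its local Lipschitz bound gives $\operatorname{length}(f\circ\g_s)=s|df_p(v)|+o(s)$. The path isometry hypothesis forces these to be equal, so $|df_p(v)|=F_p(v)$ for every $v$. Hence $F_p$ agrees with the restriction of the Euclidean norm of $\R^N$ to the linear subspace $\operatorname{Im}(df_p)$. Since any restriction of an inner-product norm to a linear subspace is itself induced by an inner product, $F_p$ is Euclidean. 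Because this holds on a dense subset of $M$ and $F$ depends continuously on the basepoint, $F_p$ is Euclidean for every $p$; that is, $(M,F)$ is Riemannian, contrary to hypothesis.

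The main obstacle is the length asymptotic used in the second paragraph. The Finsler-length estimate $\operatorname{length}_F(\g_s)=sF_p(v)+o(s)$ follows cleanly from the continuity of $F$ in the basepoint, but the Euclidean-length estimate $\operatorname{length}(f\circ\g_s)=s|df_p(v)|+o(s)$ requires care: because $f$ is only Lipschitz (not $C^1$), the composition $f\circ\g_s$ need not be smooth, only rectifiable. One must combine the Lipschitz bound on $f$ with the one-point expansion $f(p+tv)=f(p)+t\,df_p(v)+o(t)$ at $p$ to control total variation along the segment; this passage from pointwise differentiability to first-order length control is the technical heart of the argument.
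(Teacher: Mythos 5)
Your proposal follows the same route the paper itself sketches for this theorem (1-Lipschitz $\Rightarrow$ locally Lipschitz $\Rightarrow$ Rademacher $\Rightarrow$ the Finsler norm is Euclidean almost everywhere $\Rightarrow$ everywhere by continuity), and the final two steps of your argument (pullback of an inner-product norm under the injective linear map $df_p$, then a closed-and-dense argument using continuity of $F$) are fine. The one step that is a genuine gap is exactly the one you flag, and as stated it would fail: differentiability of $f$ at the \emph{single} point $p$ controls only the chords, i.e.\ $f(p+tv)-f(p)=t\,df_p(v)+o(t)$. This yields the lower bound
\[
\operatorname{length}(f\circ\g_s)\;\ge\;|f(p+sv)-f(p)|\;=\;s\,|df_p(v)|+o(s),
\]
but it cannot bound the total variation of $f\circ\g_s$ from above, because in $\sup\sum_k|f(\g(t_k))-f(\g(t_{k-1}))|$ the individual errors $o(t_k)$ at the $n$ partition points need not sum to $o(s)$; the image curve could a priori oscillate and be much longer than its chord. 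Combining the chord bound with the $1$-Lipschitz upper bound $\operatorname{length}(f\circ\g_s)\le \operatorname{length}_F(\g_s)=sF_p(v)+o(s)$ and the path-isometry hypothesis therefore only gives the one-sided inequality $|df_p(v)|\le F_p(v)$.

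The standard way to close this (and essentially what Le Donne does) is to work almost everywhere \emph{along} the segment rather than at its endpoint. For a Lipschitz curve $\b$ one has $\operatorname{length}(\b)=\int v_\b(t)\,dt$ (the paper's equation for $\ell$ in Section~2); since $f$ preserves the length of \emph{every} subsegment of $\g$, the two length measures on $[0,s]$ coincide, so the metric speed of $f\circ\g$ equals $F_{\g(t)}(v)$ for a.e.\ $t$. By Rademacher together with Fubini, for a.e.\ line in the direction $v$ the map $f$ is (fully) differentiable at a.e.\ point of that line, and at such points the metric speed of $f\circ\g$ is $|df_{\g(t)}(v)|$. Hence $|df_q(v)|=F_q(v)$ for a.e.\ $q$, first for a fixed $v$, then for a countable dense set of directions, then for all $v$ since both sides are continuous (indeed Lipschitz) in $v$. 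From that point on your argument goes through unchanged.
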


\vskip 2pt

This result was also stated by Petrunin in \cite{Petrunin}, and possibly known by Burago and Ivanov in \cite{BI}.
 
The argument to prove Le Donne's Theorem is pretty straightforward and goes as follows.  
Any path isometry $f$ into Euclidean space must be $1$-Lipschitz, and thus locally Lipschitz.  
Then by Rademacher's Theorem (Theorem \ref{Rademacher} in Section 8), $f$ must be differentiable almost everywhere.  
Thus the Finsler norm is induced by an inner product almost everywhere, and hence everywhere by the continuity of the Finsler structure.  
Therefore, the original Finsler manifold was in fact Riemannian.

In light of this result, the attention of this study turned to considering isometric embeddings of length metric spaces into {\it Minkowski} space $\R^{p,q}$.  
Le Donne's result above does not quite rule out such a possibility, but it does give a restrictive necessary condition stated as the following Proposition.

\vskip 10pt

\begin{prop}\label{Proposition}
Let $\X$ be a Finsler manifold which is not Riemannian, and suppose that a (path) isometric embedding $f: \X \to \R^{p,q}$ exists.  
Let $\pi^+$ and $\pi^-$ denote the natural projections from $\R^{p,q}$ onto $\R^{p,0}$ and $\R^{0,q}$, respectively.  
Then it must be the case that $p > 0$, $q > 0$, and both maps $\pi^+ \circ f$ and $\pi^- \circ f$ are not locally Lipschitz.
\end{prop}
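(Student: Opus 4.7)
The plan is to apply Le Donne's Theorem in two ways: directly to rule out $p=0$ and $q=0$, and then, after propagating local Lipschitz regularity from one projection to the other, to rule out either projection being locally Lipschitz.

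First I would handle the dimensional cases. When $q=0$ the inner product on $\R^{p,0}$ is positive definite, so the length of any curve in $\R^{p,0}$ is just its Euclidean length in $\R^p$; a path isometric embedding $\X\to \R^{p,0}$ would then contradict Le Donne's Theorem. The case $p=0$ is symmetric, since the curve length $\int |\langle \gamma',\gamma'\rangle|^{1/2}\,dt$ in $\R^{0,q}$ coincides with Euclidean length in $\R^q$.

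For the main claim, suppose for contradiction that one of the projections is locally Lipschitz; by symmetry we may assume $\pi^+\circ f$ is $K$-Lipschitz on a neighborhood $U$. The core estimate is the elementary pointwise inequality
\[ \|v^-\|\ \le\ \|v^+\| + \sqrt{\bigl|\|v^+\|^2-\|v^-\|^2\bigr|}, \qquad (v^+,v^-)\in \R^p\times\R^q, \]
which is immediate by squaring in either sign case. Applying this to the increments $f(\gamma(t_{i+1}))-f(\gamma(t_i))$ along a partition of a rectifiable path $\gamma$ in $U$ and passing to the supremum over partitions gives
\[ L_{\R^q}(\pi^-\circ f\circ\gamma)\ \le\ L_{\R^p}(\pi^+\circ f\circ\gamma)+L_{\Rpq}(f\circ\gamma)\ \le\ (K+1)\,L_\X(\gamma). \]
Since $\X$ is a length space, the infimum over paths from $x$ to $y$ is $d_\X(x,y)$, so $\pi^-\circ f$ is $(K+1)$-Lipschitz and hence $f$ itself is locally Lipschitz.

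The remainder is exactly Le Donne's argument. By Rademacher's Theorem (Theorem \ref{Rademacher}) $f$ is differentiable almost everywhere, and at any such point $x$, evaluating the path isometry condition on short curves tangent to a direction $v\in T_x\X$ forces
\[ \|v\|_F^2\ =\ \bigl|\langle df_x v,\,df_x v\rangle\bigr|\ =\ \bigl|\|df_x^+ v\|^2-\|df_x^- v\|^2\bigr|. \]
Because the Finsler norm is positive on nonzero vectors, the quadratic form $Q(v):=\langle df_x v,df_x v\rangle$ cannot vanish on any $v\neq 0$, so $Q$ is definite; consequently $|Q|$ is a positive definite quadratic form and the Finsler norm at $x$ is induced by an inner product. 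This holds on a set of full measure and, by continuity of the Finsler structure, on all of $\X$, contradicting the hypothesis that $\X$ is not Riemannian. I expect the main obstacle to be the Lipschitz-propagation step: the pointwise inequality above together with the care needed in passing between partition sums and curve lengths is precisely what makes the argument symmetric in the two projections, after which Le Donne's argument applies verbatim.
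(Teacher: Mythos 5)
Your proposal is correct and follows the same overall skeleton as the paper's proof (reduce to the case that $f$ itself is locally Lipschitz, then run Le Donne's Rademacher argument to force the Finsler structure to be Riemannian), but it differs in two substantive ways, both to your credit. First, the propagation step: the paper simply asserts that ``clearly, if one of these maps is locally Lipschitz but the other is not, then the map $f$ is not an isometry,'' whereas you actually prove this via the pointwise inequality $\|v^-\| \le \|v^+\| + \sqrt{\bigl|\|v^+\|^2 - \|v^-\|^2\bigr|}$ and a partition-sum estimate; this is the genuinely nontrivial content of the Proposition and your argument for it is sound. Note only that it relies on taking the length of a curve in $\Rpq$ to be the supremum over partitions of the chord pseudo-lengths $\sqrt{|\la \Delta,\Delta\ra|}$ (so that each subinterval's chord is dominated by the length of the corresponding subarc, which by the path-isometry hypothesis equals its length in $\X$); since the paper never pins down what a path isometry into $\Rpq$ means, you should state this convention explicitly, as the bound $\sum_i\sqrt{|\la\Delta_i,\Delta_i\ra|}\le L_{\X}(\gamma)$ can fail under an integral definition of pseudo-length (timelike chords can exceed the integrated length of the arc they subtend). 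Second, the endgame: the paper writes the pullback form as $Q^+ + Q^-$ and invokes the parallelogram law twice to conclude that the Finsler norm squared satisfies it; you instead observe that the pullback quadratic form $Q$ cannot vanish on nonzero vectors, hence is definite by connectedness of the unit sphere, so $|Q|$ is a positive definite quadratic form inducing the Finsler norm. Both closings are valid; yours is slightly more careful about the sign ambiguity inherent in an indefinite target, while the paper's parallelogram computation is what generalizes to its later splitting $G = G^+ + G^-$ of induced forms.
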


\vskip 2pt

The proof of this Proposition is straightforward and can be found with the preliminaries in Section 8.
What this Proposition says is that any function which is a candidate to be a path isometric embedding cannot be locally Lipschitz in either of its positive or negative components.
Then to be a path isometry the energy of the image of any path with respect to the positive and negative components would need to ``cancel'' in a manner which preserves the intrinsic metric.
At first glance it may seem that such a map surely cannot exist.  
But the Main Theorem of this paper proves that this reasoning is incorrect, at least in a discrete sense.

\vskip 10pt

\begin{thm}
Let $\X$ be a proper $n$-dimensional length metric space, and let $\mathscr{D} \subseteq \X$ be any countable dense subset.  
Then there exists a collection of geodesics $\G$ associated to $\D$ and an embedding $f: \X \to \R^{3n+6,1}$ such that $E(\g) = E(f \circ \g)$ for all $\g \in \G$.  
Moreover, if desired, the map $f$ can be constructed so that its projection onto $\R^{3n+6,0}$ is not locally Lipschitz.
\end{thm}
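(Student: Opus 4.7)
My plan is to construct $f$ as the limit of an inductively defined sequence $f_k : \X \to \R^{3n+6,1}$. First, enumerate the countable set $\D \times \D$ as $\{(a_k,b_k)\}_{k \ge 1}$ and, using the Hopf--Rinow--Cohn-Vossen theorem for proper length spaces, choose for each $k$ a minimizing geodesic $\g_k$ from $a_k$ to $b_k$ parameterized by arc length; set $\G = \{\g_k\}$. By the Menger--N\"obeling embedding theorem (applicable since $\X$ is separable and of topological dimension $n$), there is a topological embedding $\iota: \X \to \R^{2n+1}$, which I include into the first $2n+1$ positive coordinates of $\R^{3n+6,1}$ to obtain the initial map $f_0$. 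The remaining $n+5$ positive axes and the single negative axis are reserved for the energy-correcting perturbations below.

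The heart of the argument is an inductive construction $f_k = f_{k-1} + \psi_k$ which preserves $E(f_{k-1}\circ\g_j)=E(\g_j)$ for $j<k$ while also achieving $E(f_k\circ\g_k) = E(\g_k)$. Let $\e_k := E(\g_k) - E(f_{k-1}\circ\g_k)$ be the current deficit. I would take $\psi_k$ valued in the two-plane spanned by a positive unit vector $e^+_{(k)}$ (chosen from the reserved positive axes, with controlled re-use) and the negative unit vector $e^-$, writing $\psi_k = \a_k\, e^+_{(k)} + \b_k\, e^-$ with $\a_k,\b_k:\X\to\R$ supported in a thin tubular neighborhood of $\g_k$. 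The induced contribution along any curve $\eta$ is
\[
\int \bigl[(\a_k\circ\eta)'(t)\bigr]^2 - \bigl[(\b_k\circ\eta)'(t)\bigr]^2\, dt,
\]
so the Lorentzian signature permits pointwise cancellation between the positive and negative pieces. I would design $\a_k,\b_k$ so that along each earlier geodesic $\g_j$ ($j<k$) this integrand vanishes identically---achieved by setting $\a_k = \pm\b_k$ on a neighborhood of $\bigcup_{j<k}\g_j$---while along an \emph{exclusive} subarc of $\g_k$ it integrates to exactly $\e_k$ via tunable high-frequency oscillations in which $\a_k$ and $\b_k$ differ appropriately.

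The principal obstacle I expect is guaranteeing the existence of such an exclusive subarc: if $\g_k$ were contained pointwise in $\bigcup_{j<k}\g_j$, the cancellation constraint would force the contribution on $\g_k$ to vanish, conflicting with a nonzero $\e_k$. I would handle this by (i) allowing a generic choice of minimizing geodesic between $a_k$ and $b_k$, (ii) using the $n$-dimensionality of $\X$ to show that a countable union of $1$-dimensional geodesics cannot fill a neighborhood of a generic point of $\g_k$, and (iii) localizing the perturbation to a subarc of $\g_k$ disjoint from that union. A secondary obstacle is the passage to the limit $f = \lim f_k$: imposing $\|\psi_k\|_\infty < 2^{-k}$ makes the sequence uniformly Cauchy and so convergent to a continuous $f$, and the uniformly small supports of the $\psi_k$ together with the properness of $\X$ should let one show by an open-mapping argument that $f$ remains a topological embedding.

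For the ``moreover'' clause---whose necessity in the Finsler case is precisely Proposition~\ref{Proposition}---I would modify the construction so that the oscillation frequencies $N_k$ in the positive component of $\psi_k$ grow rapidly, say $N_k \ge 2^k / \|\a_k\|_\infty$. The matched Lorentzian cancellation in the negative component keeps every energy $E(f\circ\g_j)$ intact, but the Lipschitz constants of $\pi^+\circ\psi_k$ are forced to blow up. Arranging the supports of the $\psi_k$ to accumulate near every point of $\D$, and hence by density everywhere, then forces $\pi^+\circ f$ to fail to be locally Lipschitz on every open set.
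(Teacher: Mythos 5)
Your proposal is a genuinely different strategy from the paper's (which builds nerves of finer and finer open covers, metrizes them as Euclidean polyhedra, and takes a limit of piecewise linear isometric embeddings of those nerves), but as written it has several gaps, two of which are fatal in their current form. First, the quantity $\e_k = E(\g_k) - E(f_{k-1}\circ\g_k)$ that drives the induction need not be defined. A Menger--N\"obeling embedding $\iota$ is merely topological, so $\iota\circ\g_k$ has no rectifiability and the velocity of $f_0\circ\g_k$ need not exist anywhere; the supremum defining $E(f_0\circ\g_k)$ can be $+\infty$, and in the indefinite setting the partition sums are not even monotone, so there is no well-defined finite deficit to correct. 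Relatedly, your formula for the contribution of $\psi_k$ along a curve omits the cross terms: $E\bigl((f_{k-1}+\psi_k)\circ\eta\bigr)$ differs from $E(f_{k-1}\circ\eta)$ by $\int 2\langle (f_{k-1}\circ\eta)', (\psi_k\circ\eta)'\rangle + \langle(\psi_k\circ\eta)',(\psi_k\circ\eta)'\rangle$, and since every stage shares the single negative axis (and re-uses positive axes), the cross terms do not vanish along the earlier geodesics $\g_j$ even where $\a_k=\pm\b_k$. The paper avoids both problems by never leaving the piecewise linear world: every $f_i$ factors through a finite-dimensional Euclidean polyhedron, energies are finite sums, and the corrections are made by an exact edge-by-edge computation (Lemma \ref{technical lemma}) rather than by cancelling a pre-existing deficit.

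The second fatal gap is the passage to the limit. As the paper notes explicitly, the Lorentzian energy functional is neither upper nor lower semicontinuous under uniform convergence, so $E(f_k\circ\g_j)=E(\g_j)$ for all $k$ does \emph{not} imply $E(f\circ\g_j)=E(\g_j)$: uniformly small high-frequency oscillations in the negative coordinate can change the limiting energy by a fixed negative amount. The paper's mechanism for controlling this is Remark \ref{negative direction does not change} --- after finitely many stages the negative coordinate of $f_k$ is \emph{frozen} along each fixed geodesic, which yields lower semicontinuity, and the reverse inequality is then obtained by applying the perturbation estimate (Theorem \ref{perturbed energy theorem} / Corollary \ref{perturbed energy corollary}) to the purely positive part $\pi^+\circ f_k$, with the $\rho_k$ chosen against the $\d$ from that theorem. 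Your construction keeps a nonzero negative component $\b_k e^-$ near $\bigcup_{j<k}\g_j$ at every later stage, so no such freezing occurs and you have no tool to interchange the limit with $E$. Two smaller but real issues: injectivity of the limit does not follow from $\|\psi_k\|_\infty<2^{-k}$ alone (a uniform limit of embeddings need not be injective); you need the Nash-type argument of tying $\|\psi_k\|_\infty$ to the separation modulus of $f_{k-1}$ on the compact sets $\{(x,y): d(x,y)\ge 2^{-m}\}$, which is what the paper does in Subsection 7.1. And the "exclusive subarc" step needs care, since distinct geodesics in your $\G$ can intersect in bad ways; the paper pre-processes $\G$ so that all intersections are points or intervals precisely to control this.
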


\vskip 2pt

Some remarks about the Main Theorem are as follows:

\vskip 10pt

	
\n (1)  In this paper the term ``dimension" will always mean ``covering dimension".

\vskip 10pt

\n (2)  Given a subset $\D \subseteq \X$, a {\it collection of geodesics associated to $\D$} is a set $\G$ of the form
	\[
	\G := \left\{ \g: [a,b] \to \X \, | \, \g \text{ is a geodesic} \right\}
	\]
which satisfies that for all $x, y \in \D$, there exists a geodesic $\g \in \G$ such that $\g(a) = x$ and $\g(b) = y$ (or vice versa).  
Let us note that we allow the domain $[a,b]$ to differ for the different geodesics in $\G$.  

\vskip 10pt

\n (3)  It is well known that every proper length metric space is a geodesic space (see \cite{BBI} and/or \cite{BH}), meaning that between any two points $x, y \in \X$ there exists a path $\a: [0,1] \to \X$ such that $\a(0) = x$, $\a(1) = y$, and $|s-t| \cdot d(x,y) = d(\a(s), \a(t))$ for all $s, t \in [0,1]$.  
So we can upgrade the space $\X$ in the Main Theorem to being a geodesic metric space.

\vskip 10pt

\n (4)  Recall that for a Riemannian manifold $(M, g)$ and a smooth path $\g: [a,b] \to M$, the {\it length} and {\it energy} functionals are defined as
		\begin{equation*}
		\ell(\g) = \int_a^b \sqrt{g(\g'(t), \g'(t))} \, dt 	\qquad	\text{and}	\qquad	E(\g) = \int_a^b g(\g'(t), \g'(t)) \, dt.
		\end{equation*}
	In a pseudo-Riemannian manifold (such as Minkowski space $\R^{p,q}$ when $q > 0$) the length functional is not always well-defined over the reals since the quadratic form is not positive-definite.
	That is why we consider the energy functional $E()$ in the Main Theorem instead of the length functional.  
	
\vskip 10pt
	
\n (5)  Let $f: \X \to \R^{3n+6,1}$ be the map guaranteed by the Main Theorem, and let $\g \in \G$.  
	Note that $E(\g)$ depends on the parameterization of $\g$ whereas $\ell(\g)$ does not.  
	So even though $E(f \circ \g) = E(\g)$, in theory it could be the case that $f$ increases the energy of certain parts of $\g$ while decreasing the energy in other parts in a manner which evens out to preserve the intrinsic energy.
	But an inspection of the proof of the Main Theorem shows that this is not the case here.
	If $\a$ is any geodesic segment of $\g$, then the map $f$ from the Main Theorem satisfies that $E(f \circ \a) = E(\a)$.  
	ie, $E(f \circ \g) = E(\g)$ with respect to {\it any} parameterization of $\g$.  
	So we see that the map $f$ really does preserve the intrinsic metric of $\X$ over the entirety of the geodesic paths in $\G$, which justifies the terminology in the abstract (and below) that this map is an ``approximate isometric embedding''.
	
\vskip 10pt
	
\n (6)  Without the one ``negative direction'', $\R^{3n+6,0} \cong \E^{3n+6}$ would be a metric space.  
	Then by the triangle inequality the map guaranteed by the Main Theorem would be an isometric embedding.  
	So in terms of the negative dimensions involved, the Main Theorem is the best that one can hope for.  
	Of course, one could hope to reduce the positive dimensions to as low as $2n$ and, in particular, any improvements to the dimensions in Theorem \ref{Minemyer} in Section 3 would directly reduce the positive dimension requirement of $3n+6$.  
	
\vskip 10pt

\n (7)  In \cite{OS}, Otsu and Shioya prove that every Alexandrov space with curvature bounded below admits an almost everywhere $C^0$-differential structure which induces the intrinsic metric almost everywhere.
While there are many differences between this result and our Main Theorem (such as no curvature restrictions here, and the vast difference between ``almost everywhere" and ``countable dense"), these results share the similarity that they extend notions from Riemannian geometry to the singular case, but not quite over the entire space.

\vskip 10pt
	
\n (8)  The space $\R^{N,1}$ is typically referred to as {\it Lorentzian space}.  
	If we define an {\it approximate isometric embedding} to be an embedding which is an isometry when restricted to a dense set of points, then the following Corollary is essentially a restatement of the Main Theorem.
	

\vskip 10pt

\begin{corollary}
Every proper length metric space with finite covering dimension admits an approximate isometric embedding into Lorentzian space of an appropriate dimension.
\end{corollary}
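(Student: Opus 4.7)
The plan is to read the Corollary as a direct translation of the Main Theorem into the vocabulary of Remark (8). Since every proper metric space is separable, one may fix any countable dense subset $\D \subseteq \X$. Applying the Main Theorem with this $\D$ then produces a collection $\G$ of geodesics associated to $\D$ together with an embedding $f \colon \X \to \R^{3n+6,1}$, where $n$ is the (finite) covering dimension of $\X$, such that $E(\g) = E(f \circ \g)$ for every $\g \in \G$; note that $3n+7$ is finite and hence ``of an appropriate dimension'' in the sense of the Corollary.

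Next one verifies that this $f$ qualifies as an isometry on $\D$ in the sense of Remark (8). For any $x, y \in \D$, the definition of a collection of geodesics associated to $\D$ supplies some $\g \in \G$ joining $x$ to $y$ on an interval $[a,b]$, and the constant-speed condition on a geodesic forces $E(\g) = d_{\X}(x,y)^2/(b-a)$. The Main Theorem then gives $E(f \circ \g) = d_{\X}(x,y)^2/(b-a)$, and by Remark (5) this identity passes to every subsegment of $\g$. This segment-wise preservation of the intrinsic energy along distinguished paths between points of $\D$ is the natural substitute for an isometric identity when the target is Lorentzian rather than Euclidean, since $\R^{3n+6,1}$ itself does not carry a genuine metric. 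Combined with the density of $\D$ in $\X$, this is exactly the statement that $f$ is an approximate isometric embedding.

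The only ``obstacle'' in the argument is interpretive rather than technical: one must fix the correct reading of ``isometry on a dense set'' in a pseudo-Riemannian target. Once that reading is identified with segment-wise energy preservation along the geodesics of $\G$ supplied by the Main Theorem, the Corollary becomes a straightforward relabeling of that theorem, and no further substantive argument is required.
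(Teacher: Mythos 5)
Your proposal is correct and takes essentially the same route as the paper: the paper offers no separate proof, stating in Remark (8) that once ``approximate isometric embedding'' is defined as an embedding that is an isometry (in the energy sense) on a dense set, the Corollary is just a restatement of the Main Theorem. Your added observations --- that properness gives $\sigma$-compactness and hence separability, so a countable dense $\D$ exists, and that energy preservation on the geodesics of $\G$ (together with Remark (5) for subsegments) is the right reading of ``isometry on $\D$'' when the target is Lorentzian --- are exactly the implicit content of that remark.
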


\vskip 2pt


This paper is organized as follows.  
In order to prove the Main Theorem, we need to understand the energy functional on general metric spaces and how it behaves under perturbations of maps.  
So this is studied in Section 2, and many of the results are analogous to those related to the length functional developed by Petrunin in \cite{Petrunin}.
In Section 3 we discuss the theory of {\it indefinite metric polyhedra} and prove a technical Lemma (Lemma \ref{technical lemma}).  
Lemma \ref{technical lemma} is arguably the main construction which makes the proof of the Main Theorem work.

Sections 4 through 7 are dedicated to proving the Main Theorem.  
It should be noted that, for simplicity, we both assume that $\X$ is compact and we ignore the last sentence of the Main Theorem in Sections 4 through 6.
In Section 7 we finish the proof and discuss how to relax the previous two assumptions.
Finally, in Section 8, we outline a few of the more ``well known" preliminaries and prove Proposition \ref{Proposition}.

\vskip 10pt

An outline of the proof of the Main Theorem is as follows.

\vskip 10pt

{\it Step 1:}  We write $\D$ as the increasing union of finite subsets.  
That is, $\D = \bigcup_{i=1}^\infty \D_i$, $\D_i$ finite, and $\D_i \subset \D_{i+1}$ for all $i$.
For each $\D_i$ we construct an associated collection of ``allowable" geodesics $\G_i$ in such a way that $\G_i \subset \G_{i+1}$ for all $i$.  
Then $\G = \bigcup_{i=1}^\infty \G_i$.
Step 1 is covered in Section 4.

\vskip 10pt

{\it Step 2:}  We prove the following Lemma, which may be of its own independent interest.

\vskip 10pt

\begin{lemma}\label{Key Lemma}
Let $(\X, d)$ be an $n$-dimensional proper length metric space, let $\D \subseteq \X$ be any finite subset of $\X$, and let $\d > 0$.  
Then there exists a map $f: \X \rightarrow \mathbb{E}^{2n+5}$ which satisfies:
	\begin{enumerate}
	\item The map $f$ is $\d$-close to being an embedding.  That is, $f$ satisfies
	\[
	f(x) = f(x') \qquad \Longrightarrow \qquad d(x, x') < \d 
	\]
	\item The map $f$ is an isometry when restricted to $\D$.  That is, 
		\[
		d_{f(\X)}(f(x), f(x')) = d_{\X}(x, x')
		\]
		for all $x, x' \in \D$.
	\end{enumerate}
\end{lemma}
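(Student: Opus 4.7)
The plan is to approximate $\X$ by a finite $n$-dimensional Euclidean polyhedron $\P$ whose intrinsic metric agrees with $d_\X$ on $\D$, and then to apply Theorem \ref{Minemyer} to obtain a PL isometric embedding $g\colon \P \to \mathbb{E}^{2n+5}$. The desired map will be the composition $f = g \circ \pi$ of $g$ with an approximating quotient map $\pi\colon \X \to \P$ which is the identity on $\D$.

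To build $\P$, first use that every proper length metric space is a geodesic space to fix, for each unordered pair $\{x,x'\} \subset \D$, a geodesic $\g_{xx'}$ connecting $x$ and $x'$. Using the assumption that $\X$ has covering dimension $n$, choose a finite open cover $\scrU = \{U_1,\ldots,U_N\}$ of $\X$ of mesh less than $\d/3$ whose nerve $\N(\scrU)$ has dimension at most $n$, and then (subdividing further if necessary) select a vertex $v_i \in U_i$ so that every point of $\D$ appears as some $v_i$ and each $\g_{xx'}$ is traversed by a sufficiently dense sequence of these vertices. Define $\P$ to be $\N(\scrU)$ with each simplex endowed with the Euclidean metric determined by the edge lengths $d_\X(v_i, v_j)$; refining $\scrU$ if necessary ensures the required metric conditions hold on each simplex, so that $\P$ is an $n$-dimensional Euclidean polyhedron. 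By construction each $\g_{xx'}$ lifts to an edge-path in $\P$ of polyhedral length exactly $d_\X(x,x')$, which combined with a $1$-Lipschitz projection $\pi\colon \X \to \P$ built from a partition of unity subordinate to $\scrU$ forces $d_\P(x,x') = d_\X(x,x')$ for every $x,x' \in \D$. Setting $f := g \circ \pi$, condition (2) then follows from the chain
\[
d_\X(x,x') = d_\P(x,x') = d_{g(\P)}(g(x),g(x')) = d_{f(\X)}(f(x),f(x')),
\]
while condition (1) follows because each fiber of $\pi$ lies in a single open star of $\N(\scrU)$ and hence has diameter less than $\d$.

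The main obstacle I expect is the polyhedral approximation step: constructing $\P$ so that it \emph{simultaneously} has covering dimension at most $n$, has edge lengths realizing the $\X$-distances between neighboring vertices, and contains a distinguished edge-path of length exactly $d_\X(x,x')$ for every pair in $\D$. Naively enriching the nerve with combinatorial copies of each geodesic $\g_{xx'}$ risks raising the dimension, so the refinement of $\scrU$ and the placement of the auxiliary vertices $v_i$ must be coordinated with some care. Once $\P$ has been built, the application of Theorem \ref{Minemyer} and the verification that $\pi$ is close to injective should be comparatively routine, and the dimension $2n+5$ of the target appears only through the dimension count in Theorem \ref{Minemyer}.
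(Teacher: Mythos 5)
Your outline (nerve of a fine cover, metrize it as a Euclidean polyhedron, map $\X$ in by a partition of unity, then apply a polyhedral isometric embedding theorem) is the same skeleton as the paper's proof, but two of your steps have genuine gaps. The central one is the claimed equality $d_\P(x,x') = d_\X(x,x')$. Both facts you offer in its support --- that each $\g_{xx'}$ lifts to an edge-path of length exactly $d_\X(x,x')$, and that $\pi$ is $1$-Lipschitz --- only give the upper bound $d_\P(x,x') \le d_\X(x,x')$. Nothing in your construction rules out a path in $\P$ that cuts through simplices away from the distinguished edge-paths and is strictly shorter than $d_\X(x,x')$; with edge lengths $d_\X(v_i,v_j)$ everywhere such shortcuts can occur. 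The paper's proof spends most of its effort on exactly this lower bound: it uses a three-layer cover $\scrU_1\cup\scrV_1\cup\scrW_1$ in which small balls isolate the intersection points of the chosen geodesics, chains of overlapping balls ``protect'' the geodesic segments, and every edge of the nerve touching the residual layer $\scrW_1$ is assigned a huge length $M_1$; Bridson's Theorem \ref{Bridson} on taut chains then shows that any path leaving the star of a protected geodesic is long, so the only short paths between $u_x$ and $u_y$ are the protected edge-paths. (Note also that the paper's map $\psi$ to the nerve is explicitly \emph{not} $1$-Lipschitz, and the argument does not need it to be.)

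The second gap is the dimension bookkeeping, and it is a symptom of the first. You want the nerve to have dimension at most $n$ and simultaneously to contain combinatorial copies of all the geodesics; you correctly flag the tension but do not resolve it. The paper resolves it by paying for the protection: $\scrU_1\cup\scrV_1$ has order $3$ and $\scrW_1$ has order $n+1$, so the full cover has order $n+3$ and the nerve has dimension $n+2$, and then Theorem \ref{Minemyer1} (the $2n+1$ version, not Theorem \ref{Minemyer}) gives the target dimension $2(n+2)+1 = 2n+5$. With an $n$-dimensional nerve, Theorem \ref{Minemyer} would give $\E^{3n}$ and Theorem \ref{Minemyer1} would give $\E^{2n+1}$; neither matches $2n+5$, which signals that the nerve cannot be kept $n$-dimensional once the geodesics are protected.
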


\vskip 2pt

In Lemma \ref{Key Lemma}, the notation $d_{f(\X)}(,)$ means the infimum of the lengths of all paths in $f(\X)$ between the given points.
For the purposes of proving the Main Theorem, the finite set $\D$ in Lemma \ref{Key Lemma} will be the initial set $\D_1$. 
The map $f = f_1$ from the Lemma will preserve the lengths of the geodesics in $\G_1$ and will ``blow up" the lengths of all other paths.  

The proof of the Main Theorem depends on a recursive construction.  
The proof of Lemma \ref{Key Lemma} is essentially the base case (when $i=1$) of this construction, and goes as follows.
Given $\D_1$ and $\G_1$, we construct an open cover $\O_1$ which ``protects" the geodesics in $\G_1$.  
Let $\N_1$ denote the nerve of $\O_1$.  
We endow $\N_1$ with a metric $g_1$ which closely resembles the geometry of $\X$ near the image of $\G_1$ while blowing up the metric everywhere else.
Theorem \ref{Minemyer1} then allows us to find an isometric embedding $h_1: \N_1 \to \E^{2n+5}$.  
We then use a partition of unity to construct a map $\psi: \X \to \N_1$, and the composition $(h_1 \circ \psi)$ of the two preceding functions is the map $f$ in Lemma \ref{Key Lemma}.

This step will be completed in Section 5.

\vskip 10pt

{\it Step 3:}  Following the ideas above, we construct open covers $\O_i$ corresponding to the pair $(\D_i , \G_i)$ in such a way that the mesh of $\O_{i+1}$ is strictly less than one-third of the Lebesgue number for $\O_i$ for each $i$.  
We let $\N_i$ be the nerve of $\O_i$, and we use a result of Isbell \cite{Isbell} to construct maps $\psi_i : \X \to \N_i$ and piecewise linear maps $\v_{i+1,i} :\N_{i+1} \to \N_i$ for each $i$.  
The space $\X$ is then homeomorphic to the inverse limit of the system $\{\N_i, \v_{j,i} \}$ (see Figure \ref{fig1}).  

Using both the geometry of $\X$ and the maps $\left( \v_{i+1,i} \right)$ we construct Euclidean metrics on each nerve $\N_i$.  
The map $\v_{i+1,i}$ will be $1$-Lipschitz over ``most" of $\N_{i+1}$, but there will be a small controlled region where this map is not 1-Lipschitz.  
We use the map $\v_{i+1,i}$ to recursively construct piecewise linear isometric embeddings $h_{i+1} :\N_{i+1} \to \R^{3n+6,1}$.  
The map $h_{i+1}$ will be an approximation of $h_i$, and we need the one negative dimension to ``fix" the regions where $\v_{i+1,i}$ is not $1$-Lipschitz.
All of this is done in Section 6, which constitutes the bulk of this paper.

\vskip 10pt

{\it Step 4:}  We let $f_i := h_i \circ \psi_i$, and then $f := \lim_{i \to \infty} f_i$.  
This limit will converge uniformly, ensuring that $f$ is continuous.  
We then prove all of the necessary properties of $f$:  that it is injective and that it preserves the energy of all paths contained in $\G$.  
Finally, we discuss how to alter the proof to guarantee that $f$ is not locally Lipschitz (anywhere) when projected onto the first $3n+6$ positive coordinates, and how to deal with spaces which are proper instead of compact.
This is the content of Section 7, and completes the proof of the Main Theorem.



\begin{figure}
\begin{center}
\begin{tikzpicture}[scale=0.8]

\draw (0,2.5)node{$\X$};
\draw (0,0)node{$\N_i$};
\draw (-2.5,0)node{$\N_{i-1}$};
\draw (2.5,0)node{$\N_{i+1}$};
\draw (0,-2.5)node{$\R^{3n+6,1}$};

\draw[->] (-0.3,0) -- (-2,0);

\draw[<-] (0.3,0) -- (2,0);

\draw[<-] (-4.7,0) -- (-3,0);

\draw[->] (4.7,0) -- (3,0);

\draw[->] (0,2.2) -- (0,0.3);

\draw[<-] (0,-2.2) -- (0,-0.3);

\draw[<-] (2.25,0.35) -- (0.25,2.25);

\draw[<-] (-2.25,0.35) -- (-0.25,2.25);

\draw[->] (2.25,-0.25) -- (0.25,-2.15);

\draw[->] (-2.25,-0.25) -- (-0.25,-2.15);

\draw[fill=black!] (-4.9,0) circle (0.1ex);
\draw[fill=black!] (-5.1,0) circle (0.1ex);
\draw[fill=black!] (-5.3,0) circle (0.1ex);
\draw[fill=black!] (4.9,0) circle (0.1ex);
\draw[fill=black!] (5.1,0) circle (0.1ex);
\draw[fill=black!] (5.3,0) circle (0.1ex);

\draw (0,1.25)node[right]{$\psi_i$};
\draw (1.45,1.25)node[right]{$\psi_{i+1}$};
\draw (-1.45,1.25)node[left]{$\psi_{i-1}$};
\draw (-1.15,0)node[above]{$\v_{i, i-1}$};
\draw (1.15,0)node[above]{$\v_{i+1, i}$};
\draw (0,-1.25)node[right]{$h_i$};
\draw (1.25,-1.25)node[right]{$h_{i+1}$};
\draw (-1.25,-1.25)node[left]{$h_{i-1}$};

\draw (3,-2)node[right]{$f_i := h_i \circ \psi_i$ for all $i$};

\end{tikzpicture}
\end{center}
\caption{Diagram of the maps and spaces involved in the proof of the Main Theorem.}
\label{fig1}
\end{figure}
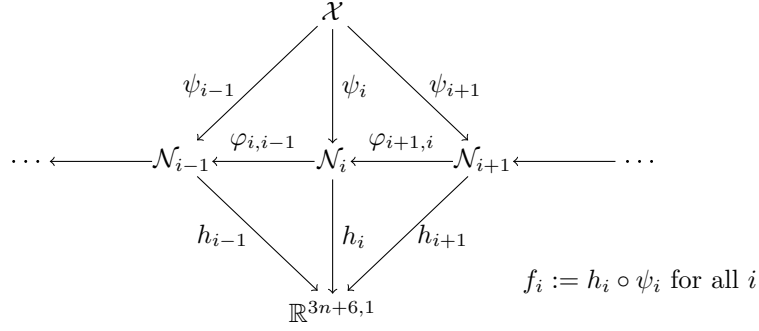

\vskip 20pt

\section{The length and energy functionals on paths in metric spaces}
Let $(\X, d)$ denote an arbitrary metric space, and let $\a : [a,b] \to \X$ be a continuous path.  
Define the {\it length} of $\a$, denoted $\ell(\a)$, by
	\begin{equation}\label{definition of length}
	\ell(\a) = \text{sup} \sum_{k=1}^n d(\a(t_{k-1}),\a(t_k))
	\end{equation}
where the supremum is taken over all subdivisions $a = t_0 < t_1 < \hdots < t_n = b$ with no bound on $n$.  
From the triangle inequality it is clear that finer subdivisions increase the above sum (or, at least, do not decrease it).
So one could instead only consider equidistant subdivisions where for general $k$, $t_k = a + k \Delta t$ and $\Delta t = \frac{b-a}{n}$.  
It is also clear from equation \eqref{definition of length} that $\ell(\a)$ does not depend on the parameterization of $\a$.  

Define the {\it velocity} $v_\a(t)$ by
	\begin{equation*}
	v_\a (t) = \lim_{\e \to 0} \frac{d(\a(t), \a(t+\e))}{| \e |}
	\end{equation*}
provided the limit exists.  
If the path $\a$ is Lipschitz then, by Rademacher's Theorem \ref{Rademacher}, $v_\a (t)$ exists almost everywhere.  
Moreover, it is proved in \cite{BBI} that for a Lipschitz path $\a$
	\begin{equation}\label{Lebesgue integral}
	\ell(\a) = \int_a^b v_\a (t) \, dt
	\end{equation}
where the integral in equation \eqref{Lebesgue integral} denotes the Lebesgue integral.  
Equation \eqref{Lebesgue integral} becomes very intuitive if one rewrites equation \eqref{definition of length} as
	\begin{equation}\label{definition of length 2}
	\ell(\a) = \text{sup} \sum_{k=1}^n \left[ \frac{d(\a(t_{k-1}),\a(t_k))}{t_k - t_{k-1}} \left( t_k - t_{k-1} \right) \right].
	\end{equation}
	
Since $v_\a (t)$ is defined almost everywhere on $[a,b]$ and is Lebesgue integrable (for a Lipschitz path $\a$), so to is $v_\a^2(t)$.
Then motivated by both classical Riemannian geometry and equation \eqref{Lebesgue integral}, we define the {\it energy} of a path $\a$ by
	\begin{equation}\label{integral for energy}
	E(\a) = \int_a^b v_\a^2 (t) \, dt.
	\end{equation}
	
It follows directly from the argument in \cite{BBI} for equation \eqref{Lebesgue integral} that
	\begin{equation}\label{definition of energy}
	E(\a) = \text{sup} \sum_{k=1}^n \left[ \frac{d^2(\a(t_{k-1}),\a(t_k))}{(t_k - t_{k-1})^2} \left( t_k - t_{k-1} \right) \right].
	\end{equation}
Of course, equation \eqref{definition of energy} simplifies to
	\begin{equation}\label{definition of energy 2}
	E(\a) = \text{sup} \sum_{k=1}^n \frac{d^2(\a(t_{k-1}),\a(t_k))}{t_k - t_{k-1}}.
	\end{equation}
In equation \eqref{definition of energy} it may not be so clear that this sum is nondecreasing with respect to finer and finer subdivisions.  
But this is the case, and is the content of the following Lemma.

\begin{lemma}\label{triangle inequality for energy}
The summands of the energy functional satisfy the triangle inequality.
That is, for all $p, q, r \in \X$ and $\d, \e > 0$ with $0 < \d < \e$ we have that
	\begin{equation*}
	\frac{d^2(p, r)}{\e} \leq \frac{d^2 (p, q)}{\d} + \frac{d^2(q, r)}{\e - \d}.
	\end{equation*}
Moreover, if equality holds then
	\[
	d(p,r) = d(p,q) + d(q,r)
	\]
\end{lemma}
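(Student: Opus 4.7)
The plan is to reduce the claim to a clean algebraic inequality, prove it via Cauchy--Schwarz (or an equivalent direct manipulation), and then combine with the triangle inequality in $\X$.

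Set $a = d(p,q)$, $b = d(q,r)$, and $c = d(p,r)$, so that $a, b, c \geq 0$ and, by the triangle inequality in $(\X,d)$, $c \leq a + b$. First I would prove the purely algebraic statement
\[
\frac{(a+b)^2}{\e} \;\leq\; \frac{a^2}{\d} + \frac{b^2}{\e - \d} \qquad \text{for } 0 < \d < \e.
\]
The cleanest way is Cauchy--Schwarz: writing $a = \sqrt{\d}\cdot\tfrac{a}{\sqrt{\d}}$ and $b = \sqrt{\e-\d}\cdot\tfrac{b}{\sqrt{\e-\d}}$,
\[
(a+b)^2 \;\leq\; \bigl(\d + (\e-\d)\bigr)\left(\frac{a^2}{\d} + \frac{b^2}{\e-\d}\right) \;=\; \e\left(\frac{a^2}{\d} + \frac{b^2}{\e-\d}\right).
\]
Dividing by $\e$ gives the desired inequality. (Alternatively one can clear denominators and observe that the difference reduces to $\tfrac{1}{\d(\e-\d)\e}\bigl((\e-\d)a - \d b\bigr)^2 \geq 0$, which makes the equality condition transparent.)

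Next, since $c \leq a+b$ and both are nonnegative, $c^2 \leq (a+b)^2$, so combining with the algebraic inequality gives
\[
\frac{c^2}{\e} \;\leq\; \frac{(a+b)^2}{\e} \;\leq\; \frac{a^2}{\d} + \frac{b^2}{\e-\d},
\]
which is precisely the stated inequality for $d(p,r), d(p,q), d(q,r)$.

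For the equality case, if the outer inequality is an equality then in particular $c^2 = (a+b)^2$. Since $c, a+b \geq 0$, this forces $c = a+b$, i.e.\ $d(p,r) = d(p,q) + d(q,r)$, as claimed. I don't expect any serious obstacle here: the only mild subtlety is that equality in the full chain also forces equality in Cauchy--Schwarz (which would give the additional relation $(\e-\d)a = \d b$), but the statement only asserts the triangle equality in $\X$, so this extra information need not be recorded.
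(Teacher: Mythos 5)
Your proof is correct, and it takes a somewhat different route from the paper's. You first isolate the purely algebraic inequality $\frac{(a+b)^2}{\e} \leq \frac{a^2}{\d} + \frac{b^2}{\e-\d}$ and dispatch it with Cauchy--Schwarz, then feed in the metric triangle inequality $c \leq a+b$ via monotonicity of squaring on nonnegatives. The paper instead works with the velocities $v = d(p,r)/\e$, $v_1 = d(p,q)/\d$, $v_2 = d(q,r)/(\e-\d)$, writes $v_1 = v + a_1$, $v_2 = v + a_2$, expands, and reduces everything to the sign of the cross term $a_1\d + a_2(\e-\d)$, which is where the triangle inequality enters; the two arguments are of course the same convexity fact underneath, but yours cleanly decouples the algebra from the metric input. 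Your treatment of the equality case is also slightly slicker: since equality in the full chain forces $c^2 = (a+b)^2$ directly, you get $d(p,r) = d(p,q)+d(q,r)$ without ever analyzing when Cauchy--Schwarz is tight, whereas the paper has to argue that equality forces $a_1 = a_2 = 0$ and then recover the conclusion by cross-multiplying. Both approaches are complete and correct.
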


The idea for this proof is virtually identical to that of Lemma 3.1.2 in \cite{Salviano}

\begin{proof}
Let
	\begin{equation*}
	v = \frac{d(p,r)}{\e}  \qquad
	v_1 = \frac{d(p,q)}{\d}  \qquad
	v_2 = \frac{d(q,r)}{\e - \d}
	\end{equation*}
So what we need to prove is that
	\begin{equation}\label{equation in energy lemma}
	\e v^2  \leq \d v_1^2 + (\e - \d) v_2^2.
	\end{equation}

Let $a_1, a_2 \in \R$ be such that $v_1 = v + a_1$ and $v_2 = v + a_2$.  
Then
	\begin{align}
	&\e v^2  \leq \d v_1^2 + (\e - \d) v_2^2  \nonumber  \\
	\Longleftrightarrow \qquad &\e v^2 \leq \d (v+a_1)^2 + (\e - \d)(v+a_2)^2  \nonumber  \\
	\Longleftrightarrow \qquad  &\e v^2 \leq \d (v^2 + 2a_1v + a_1^2) + (\e - \d)(v^2 + 2a_2v + a_2^2)  \nonumber  \\
	\Longleftrightarrow \qquad  &0 \leq 2v(a_1 \d + a_2(\e - \d)) + \d a_1^2 + (\e - \d)a_2^2.  \label{0 inequality}
	\end{align}
Since both $2v$ and $\d a_1^2 + (\e - \d)a_2^2$ are nonnegative, we complete the proof by showing that $a_1 \d + a_2(\e - \d) \geq 0$.  

From the triangle inequality in $\X$
	\begin{align*}
	&d(p,r) \leq d(p,q) + d(q,r)  \\
	\Longrightarrow \qquad &\e v \leq \d v_1 + (\e - \d) v_2 = \d(v+a_1) + (\e - \d)(v + a_2)   \\
	\Longrightarrow \qquad &0 \leq a_1 \d + (\e - \d) a_2.
	\end{align*}
	
For the last part of the Lemma, just note that inequality \eqref{equation in energy lemma} is an equality if and only if inequality \eqref{0 inequality} is an equality.  
But that is true if and only if $a_1 = a_2 = 0$, and thus $v = v_1 = v_2$.
This gives that
	\[
	\frac{d(p,r)}{\e} = \frac{d(p,q)}{\d}	\qquad	\text{and}	\qquad	\frac{d(p,r)}{\e} = \frac{d(q,r)}{\e - \d}.
	\]
Cross multiplying both equations yields
	\[
	\d d(p,r) = \e d(p,q)	\qquad 	\text{and}	\qquad	(\e - \d) d(p,r) = \e d(q,r).
	\]
Then adding these two equations and dividing by $\e$ gives the desired equality.  
\end{proof}

\begin{remark}
Since the summands of the energy functional satisfy the triangle inequality, we can rewrite equation \eqref{definition of energy 2} as
	\begin{equation*}
	E(\a) = \lim_{n \to \infty} \sum_{k=1}^n \frac{d^2(\a(t_{k-1}),\a(t_k))}{t_k - t_{k-1}}
	\end{equation*}
where the subdivision is of $n$ equidistant subintervals (and, of course, provided that the limit exists).  
That is, where $t_k = a + k \Delta t$ and $\Delta t = \frac{b-a}{n}$.  
\end{remark}

In this paper we are mainly concerned with computing the energy of geodesic paths, in which case equations \eqref{definition of length} through \eqref{definition of energy 2} simplify greatly.  
Let $\a: [a,b] \to \X$ be a geodesic between the points $p = \a(a)$ and $q = \a(b)$.  
The definition that $\a$ is a {\it geodesic} is that for all $s, t \in [a,b]$ with $s < t$ we have that
	\[
	\ell \left( \a \bigr|_{[s,t]} \right) = \frac{d(\a(s),\a(t))}{t-s}.
	\]
Comparing with the definition of the velocity and equation \eqref{Lebesgue integral} we see that $v_\a(t)$ is constant for any geodesic $\a$ and moreover
	\[
	v_\a(t) := v_\a = \frac{d(\a(a),\a(b))}{b-a} = \frac{d(p,q)}{b-a}.
	\]
Plugging this into equations \eqref{Lebesgue integral} and \eqref{integral for energy} we have that, for $\a$ a geodesic from $p$ to $q$ defined on the interval $[a,b]$:
	\begin{align}
	\ell(\a) &= d(p,q) = v_\a (b-a)  \label{length of a geodesic}  \\
	E(\a) &= \frac{d^2(p,q)}{b-a} = v_\a^2 (b-a)  \label{energy of a geodesic}.
	\end{align}
	
\begin{remark}\label{energy is a scalar multiple of length}
Notice from equations \eqref{length of a geodesic} and \eqref{energy of a geodesic} that for a geodesic $\a$ defined on the interval $[a,b]$
	\begin{equation*}
	E(\a) = v_\a  \ell(\a).
	\end{equation*}
So the energy of a geodesic is just a scalar multiple of the length (where the scalar depends on $[a,b]$).
\end{remark}

\vskip 20pt

\subsection{Maps which preserve energy}
Let $(\X, \dx)$ and $(\Y, \dy)$ be two metric spaces, and let $f: \X \to \Y$ be a continuous map.
For any path $\a: [a,b] \to \X$, define $f^*E(\a) := E(f \circ \a)$ to be the {\it pull-back energy functional} with respect to $f$.  
We would like to study how the pull-back energy functional behaves under perturbations of maps.  
More precisely, if $f, g: \X \to \Y$ are two maps such that $\dy (f(x), g(x)) < \d$ for some small $\d > 0$, we would like to know if there is any relationship between $f^* E$ and $g^* E$.  

The energy functional, being intimately related to the length functional, should share a lot of similar properties.  
The following Lemma \ref{variational energy lemma}, Theorem \ref{perturbed energy theorem}, and Corollary \ref{perturbed energy corollary} are analogous to Lemma 2.3 and Proposition 2.2 in \cite{Petrunin}.
Due to the dependence of the energy functional on the parameterization of the given path, these results are a little weaker than their counterparts proved by Petrunin.  
But they are sufficient for what will be needed to prove the Main Theorem.

\vskip 10pt

\begin{lemma}\label{variational energy lemma}
Let $f:\X \to \Y$ be a continuous function between two metric spaces and let $\d > 0$.  
Fix a positive integer $n$ and an interval $[a,b]$.
Then for all $C_1 > 1$ there exists a constant $C_2 = C_2(C_1,n,[a,b]) > 0$ such that 
	\begin{equation}\label{energy inequality}
	\sum_{k=1}^n \frac{\dy^2(f \a(t_{k-1}),f \a(t_k))}{t_k - t_{k-1}} \leq C_1 \sum_{k=1}^n \frac{\dy^2(g \a(t_{k-1}),g \a(t_k))}{t_k - t_{k-1}} + C_2 \d^2
	\end{equation}
for all $g: \X \to \Y$ which satisfies that $\dy(f(x), g(x)) < \d$ for all $x \in \X$, and for all paths $\a :[a,b] \to \X$.
In equation \eqref{energy inequality}, $t_k = a + k \Delta t$, $\Delta t = \frac{b-a}{n}$, and $f \a$ and $g \a$ denote $f \circ \a$ and $g \circ \a$, respectively.
\end{lemma}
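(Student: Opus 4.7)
The plan is to bound each summand on the left-hand side by the corresponding summand on the right, up to an additive error controlled only by $\delta$, $n$, and $[a,b]$. The hypothesis $\dy(f(x),g(x)) < \delta$, applied at the two endpoints of each subinterval together with the triangle inequality in $\Y$, immediately yields
\begin{equation*}
\dy(f\alpha(t_{k-1}),f\alpha(t_k)) \leq \dy(g\alpha(t_{k-1}),g\alpha(t_k)) + 2\delta
\end{equation*}
for every $k$, independently of the path $\alpha$.

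Next I would square this estimate. The naive bound $(u+v)^2 \leq 2u^2 + 2v^2$ would force $C_1 = 2$, which is too restrictive. Instead, I would apply the weighted Young inequality $(u+v)^2 \leq (1+\epsilon)u^2 + (1+1/\epsilon)v^2$, valid for $u,v \geq 0$ and $\epsilon > 0$, with $\epsilon := C_1 - 1$, $u := \dy(g\alpha(t_{k-1}),g\alpha(t_k))$, and $v := 2\delta$. This gives
\begin{equation*}
\dy^2(f\alpha(t_{k-1}),f\alpha(t_k)) \leq C_1 \, \dy^2(g\alpha(t_{k-1}),g\alpha(t_k)) + \tfrac{4 C_1}{C_1 - 1} \delta^2.
\end{equation*}

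Dividing both sides by $t_k - t_{k-1} = (b-a)/n$ and summing over $k = 1,\dots,n$ then yields the stated inequality with
\begin{equation*}
C_2 = \frac{4 n^2 C_1}{(C_1 - 1)(b-a)},
\end{equation*}
which depends only on $C_1$, $n$, and $[a,b]$, as required. There is no serious obstacle here; the one subtlety is recognizing that the weighted Young inequality (rather than the unweighted one) must be used in order to make $C_1$ an adjustable parameter larger than but arbitrarily close to $1$. That flexibility, at the expense of $C_2 \to \infty$ as $C_1 \to 1^+$, is precisely what one needs for Theorem \ref{perturbed energy theorem} and Corollary \ref{perturbed energy corollary}, where the ``error'' term $C_2 \delta^2$ will be driven to zero by shrinking $\delta$ before $C_1$ is allowed to tend to $1$.
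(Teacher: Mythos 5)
Your proof is correct and is essentially the paper's argument: the paper applies its Lemma \ref{triangle inequality for energy} (which is exactly the weighted Young inequality in disguise) with a three-way split of each subinterval into relative lengths $\frac{C_1-1}{2C_1}$, $\frac{1}{C_1}$, $\frac{C_1-1}{2C_1}$, whereas you first collapse the two endpoint errors into a single $2\delta$ and then invoke the two-term weighted inequality directly. Both routes produce the identical constant $C_2 = \frac{4n^2C_1}{(C_1-1)(b-a)}$.
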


\vskip 2pt

\begin{proof}
By Lemma \ref{triangle inequality for energy}, on each interval $[t_{k-1},t_k]$ we have that
	\begin{align*}
	&\frac{\dy^2(f \a(t_{k-1}),f \a(t_k))}{t_k - t_{k-1}}  \\
	&\leq \frac{\dy^2(f\a(t_{k-1}), g\a(t_{k-1}))}{\frac{C_1-1}{2C_1}(t_k - t_{k-1})} + \frac{\dy^2(g\a(t_{k-1}),g\a(t_k))}{\frac{1}{C_1}(t_k - t_{k-1})} + \frac{\dy^2(g\a(t_k), f\a(t_k))}{\frac{C_1-1}{2C_1}(t_k - t_{k-1})}  \\
	&\leq \frac{C_1 \dy^2(g\a(t_{k-1}),g\a(t_k))}{t_k - t_{k-1}} + \frac{4 \d^2 C_1}{(C_1 - 1)(t_k - t_{k-1})}  \\
	&= C_1 \frac{\dy^2(g\a(t_{k-1}),g\a(t_k))}{t_k - t_{k-1}} + \frac{4 n C_1}{(C_1 - 1)(b-a)} \d^2.
	\end{align*}
Therefore,
	\begin{align*}
	\sum_{k=1}^n \frac{\dy^2(f \a(t_{k-1}),f \a(t_k))}{t_k - t_{k-1}} &\leq \sum_{k=1}^n C_1 \frac{\dy^2(g\a(t_{k-1}),g\a(t_k))}{t_k - t_{k-1}} + \sum_{k=1}^n \frac{4 n C_1}{(C_1 - 1)(b-a)} \d^2  \\
	&= C_1 \sum_{k=1}^n \left( \frac{\dy^2(g\a(t_{k-1}),g\a(t_k))}{t_k - t_{k-1}} \right) + \frac{4 n^2 C_1}{(C_1 - 1)(b-a)} \d^2  \\
	&= C_1 \sum_{k=1}^n \frac{\dy^2(g\a(t_{k-1}),g\a(t_k))}{t_k - t_{k-1}} + C_2 \d^2
	\end{align*}
where $\ds{C_2 = \frac{4n^2 C_1}{(C_1 - 1)(b-a)}}$.

\end{proof}

\vskip 10pt

\begin{theorem}\label{perturbed energy theorem}
Let $f: \X \to \Y$ be a continuous function between two metric spaces and let $\a: [a,b] \to \X$ be any path such that $f^*E(\a) < \infty$.
Then given $C > 1$ and $\l > 0$, there exists $\d = \d(f, \l, C, \a) > 0$ such that for any continuous map $g: \X \to \Y$ satisfying
	\begin{equation*}
	\dy(f(x), g(x)) < \d \qquad \text{for all }x \in \X
	\end{equation*}
we have that
	\begin{equation*}
	f^*E(\a) < C g^*E(\a) + \l.
	\end{equation*}
\end{theorem}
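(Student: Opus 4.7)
The plan is to exploit Lemma \ref{variational energy lemma} on a single, carefully chosen finite subdivision of $[a,b]$, rather than attempting to control all subdivisions simultaneously. The lemma provides, for each fixed $n$ and $C_1 > 1$, a constant $C_2(C_1, n, [a,b])$ such that the equidistant $n$-partition sum for $f \circ \a$ is bounded by $C_1$ times the corresponding sum for $g \circ \a$ plus $C_2 \d^2$. Since $C_2$ grows with $n$ (the formula in the proof gives roughly an $n^2$ dependence), one cannot naively let $n \to \infty$ before choosing $\d$; the main obstacle the argument must navigate is this order-of-quantifiers issue, and the resolution is to freeze $n$ first based on $\l$ and only afterward pick $\d$.

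First I would choose $C_1$ strictly between $1$ and $C$, say $C_1 := (1+C)/2$. Because $f^*E(\a) < \infty$, the Remark following Lemma \ref{triangle inequality for energy} lets me fix an integer $n$ large enough, together with the associated equidistant subdivision $t_k = a + k(b-a)/n$, so that
\[
\sum_{k=1}^n \frac{\dy^2(f \a(t_{k-1}), f \a(t_k))}{t_k - t_{k-1}} > f^*E(\a) - \frac{\l}{4}.
\]
With this $n$ now fixed, I would invoke Lemma \ref{variational energy lemma} to extract the corresponding $C_2$, and then declare $\d := \min\bigl\{1, \sqrt{\l/(4 C_2)}\bigr\}$, so that automatically $C_2 \d^2 \leq \l/4$. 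Observe that this $\d$ depends only on $f$, $C$, $\l$, and $\a$ (the latter through the choice of $n$), as the theorem requires.

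The verification is then routine. For any continuous $g \colon \X \to \Y$ with $\dy(f(x), g(x)) < \d$ uniformly in $x$, Lemma \ref{variational energy lemma} gives
\[
\sum_{k=1}^n \frac{\dy^2(f \a(t_{k-1}), f \a(t_k))}{t_k - t_{k-1}} \leq C_1 \sum_{k=1}^n \frac{\dy^2(g \a(t_{k-1}), g \a(t_k))}{t_k - t_{k-1}} + C_2 \d^2.
\]
Bounding the $g$-sum from above by $g^*E(\a)$ using equation \eqref{definition of energy 2}, chaining with the approximation of $f^*E(\a)$ from the previous paragraph, and using $C_1 < C$ together with $g^*E(\a) \geq 0$ (so that $C_1 \, g^*E(\a) \leq C \, g^*E(\a)$), I obtain
\[
f^*E(\a) < C_1 \, g^*E(\a) + C_2 \d^2 + \frac{\l}{4} \leq C \, g^*E(\a) + \frac{\l}{2} < C \, g^*E(\a) + \l,
\]
completing the proof.
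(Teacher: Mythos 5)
Your proposal is correct and follows essentially the same route as the paper's proof: fix the equidistant subdivision size $n$ first using $f^*E(\a) < \infty$, then extract $C_2$ from Lemma \ref{variational energy lemma} for that fixed $n$, and only then choose $\d$ so that $C_2\d^2$ is absorbed into $\l$. Your use of an intermediate $C_1 = (1+C)/2 < C$ is a minor cosmetic refinement (it cleanly yields the strict inequality), but the structure and key quantifier-ordering are identical to the paper's argument.
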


\vskip 2pt

\begin{proof}
Let $C = C_1$ from Lemma \ref{variational energy lemma}, and let $C_2 > 0$ be the constant guaranteed to exist by the same Lemma.  
Then since $f^*E(\a) < \infty$, there exists an integer $n$ so that
	\begin{equation*}
	f^*E(\a) \leq \sum_{k=1}^n \frac{\dy^2(f \a(t_{k-1}),f \a(t_k))}{t_k - t_{k-1}} + \frac{\l}{2}
	\end{equation*}
where, as in Lemma \ref{variational energy lemma}, $t_k = a + k \Delta t$ and $\Delta t = \frac{b-a}{n}$.  
Choose $\d = \sqrt{\frac{\l}{2 C_2}}$.  
Then
	\begin{align*}
	\sum_{k=1}^n \frac{\dy^2(f \a(t_{k-1}),f \a(t_k))}{t_k - t_{k-1}} + \frac{\l}{2}
	&\leq C \sum_{k=1}^n \frac{\dy^2(g \a(t_{k-1}),g \a(t_k))}{t_k - t_{k-1}} + C_2 \d^2 + \frac{\l}{2}  \\
	&= C \sum_{k=1}^n \frac{\dy^2(g \a(t_{k-1}),g \a(t_k))}{t_k - t_{k-1}} + \l  \\
	&\leq C g^*E(\a) + \l.
	\end{align*}

\end{proof}

In the proof of the Main Theorem we need to simultaneously apply Theorem \ref{perturbed energy theorem} to a finite number of paths.  
So for convenience we note the following direct Corollary.

\vskip 10pt

\begin{corollary}\label{perturbed energy corollary}
Let $f: \X \to \Y$ be a continuous function between two metric spaces and let $\a_1, \hdots, \a_m$ be a finite number of paths in $\X$ such that $f^*E(\a_k) < \infty$ for all $1 \leq k \leq m$.  
Then, given $C > 1$ and $\l > 0$, there exists $\d = \d(f, \l, C, \a_1, \hdots, \a_m) > 0$ such that for any continuous map $g: \X \to \Y$ satisfying
	\begin{equation*}
	\dy(f(x), g(x)) < \d \qquad \text{for all }x \in \X
	\end{equation*}
we have that
	\begin{equation*}
	f^*E(\a_k) < C g^*E(\a_k) + \l
	\end{equation*}
for all $1 \leq k \leq m$.  
\end{corollary}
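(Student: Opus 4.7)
The plan is to reduce this Corollary directly to Theorem \ref{perturbed energy theorem} by applying that result to each path separately and then taking the minimum of the resulting tolerances. Since only finitely many paths $\a_1, \ldots, \a_m$ are involved, this minimum will still be strictly positive, which is the whole reason the statement is phrased for a finite collection rather than an arbitrary family.

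Concretely, I would proceed as follows. For each $k \in \{1, \ldots, m\}$, the hypothesis $f^*E(\a_k) < \infty$ together with Theorem \ref{perturbed energy theorem} (applied with the same constants $C > 1$ and $\l > 0$) produces a $\d_k = \d(f, \l, C, \a_k) > 0$ with the property that for every continuous $g : \X \to \Y$ with $\dy(f(x), g(x)) < \d_k$ for all $x \in \X$, one has
\[
f^*E(\a_k) < C\, g^*E(\a_k) + \l.
\]
Now set
\[
\d := \min\{\d_1, \d_2, \ldots, \d_m\} > 0.
\]
If $g : \X \to \Y$ is continuous and satisfies $\dy(f(x), g(x)) < \d$ for all $x \in \X$, then in particular $\dy(f(x), g(x)) < \d_k$ for every $k$, so the inequality above holds simultaneously for all $1 \leq k \leq m$. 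This is exactly the conclusion of the Corollary, and the dependence of $\d$ is $\d = \d(f, \l, C, \a_1, \ldots, \a_m)$ as claimed.

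There is really no obstacle to overcome here beyond organizing the bookkeeping: the content of the result is entirely carried by Theorem \ref{perturbed energy theorem}, and the only observation being used is that the minimum of finitely many positive numbers is positive. The only place one might stumble is if one tried to generalize to a countably infinite collection of paths, in which case the minimum could fail to be positive; this is precisely why the statement restricts to a finite collection and why the proof of the Main Theorem will invoke this Corollary with a finite subset of geodesics at each inductive stage.
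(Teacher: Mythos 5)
Your proof is correct and is exactly the intended argument: the paper states this as a ``direct Corollary'' of Theorem \ref{perturbed energy theorem} without further detail, and applying that theorem to each $\a_k$ and taking the minimum of the finitely many positive tolerances $\d_k$ is precisely what is meant.
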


\vskip 20pt

\section{Piecewise flat (indefinite) metrics on polyhedra}

\subsection{Quadratic forms associated to metric polyhedra}

Many parts of this Subsection are also contained in \cite{Minemyer4}.

Let $(\X, \T, g)$ be an {\it indefinite metric polyhedron}.  
This just means that $\X$ is a topological space, $\T$ is a locally finite simplicial triangulation of $\X$, and $g$ is a function which assigns a real number to each edge of $\T$.
This function $g$ defines a unique indefinite metric over each simplex $\s \in \T$, and thus over all of $\X$, as follows.

Let $\s = \langle v_0, v_1, ..., v_k \rangle \in \T$ be a $k$-dimensional simplex.
Embed $\s$ into $\R^k$ by identifying $v_0$ with the origin, and for $1 \leq i \leq k$ identifying $v_i$ with the terminal point of the $i^{th}$ standard basis vector.
Let $\vec{w}_i := v_i - v_0$ denote the $i^{th}$ standard basis vector, and let $e_{ij}$ denote the edge in $\s$ between the vertices $v_i$ and $v_j$.  

The indefinite metric $g$ (and our choice of ordering of the vertices of $\s$) defines a quadratic form $G$ on $\R^k$ as follows.  Define
	\begin{align*}
	G(w_i) &= s(g (e_{0i}))  \\
	G(w_i - w_j) &= s(g(e_{ij}))
	\end{align*}
where
	\begin{equation*}
	\ds{s(x) = \left\{ \begin{array}{rl} x^2 & \quad \text{if } x \geq 0 \\ -x^2 & \quad \text{if } x < 0 \end{array} \right.  }
	\end{equation*} 
is the {\it signed squared} function.
Let $\la , \ra_g$ denote the symmetric bilinear form associated to $G$.  
A simple calculation, worked out in \cite{Minemyer2} and \cite{Minemyer Geoghegan}, shows that
	\begin{equation}\label{definition of quadratic form}
	\langle \vec{w}_i , \vec{w}_j \rangle_g \, = \frac{1}{2} \left( G(\vec{w}_i) + G(\vec{w}_j) - G(\vec{w}_i - \vec{w}_j) \right).   
	\end{equation}
So $G$ is completely determined by the above definition, which is sometimes called the {\it polarization identity} of $G$.  
We will abuse notation and refer to $G$ as a quadratic form on $\s$, when rigorously $G$ is really a quadratic form on $\R^k$.  

Given a quadratic form $G$ on $\s$ as above, define the {\it energy} of an edge $e$ to simply be $G(e)$.  
Equation \eqref{definition of quadratic form} shows that a quadratic form is uniquely determined by the energy that it assigns to each edge.  
Thus, the set of quadratic forms on a $k$-dimensional simplex $\s$ can naturally be identified with $\R^n$ where $n = $  $k+1 \choose 2$.  
Each coordinate in $\R^n$ is parameterized by the energy of the corresponding edge of $\s$.  

Now let $f: \X \rightarrow \R^{p,q}$ be any continuous function, where $\R^{p,q}$ denotes Minkowski space of signature $(p,q)$.  
Let $\s$ be as above.  
The map $f$ determines a unique indefinite metric $g_f$ on $(\X, \T)$ by defining
	\begin{equation}\label{definition of induced metric}
	g_f (e_{ij}) := \la f(v_i) - f(v_j) , f(v_i) - f(v_j) \ra
	\end{equation}
where $v_i$ and $v_j$ are the vertices incident with $e_{ij}$, and where $\la , \ra$ is the Minkowski bilinear form on $\Rpq$.  
The indefinite metric $g_f$ induces a quadratic form $G_f$ on $\R^k$ just as above, called the {\it induced quadratic form} of $f$.  
We say that $f$ is a {\it piecewise linear isometry} (or {\it pl isometry}) of $\X$ into $\Rpq$ if $f$ is piecewise linear (meaning that it is simplicial on some subdivision of $\T$) and if $G = G_f$ on all simplices in a subdivision of $\T$ on which $f$ is simplicial.  
The map $f$ is a {\it pl isometric embedding} if in addition to being a pl isometry it is also an embedding.

We say that an indefinite metric polyhedron $(\X, \T, g)$ is {\it Euclidean} if the quadratic form $G(\s)$ induced by $g$ on $\s \in \T$ is positive definite for all $\s \in \T$.  
So Euclidean polyhedra can be viewed as combinatorial analogues to Riemannian manifolds.
It is well known that the collection of positive definite quadratic forms is closed under addition and positive scalar multiplication.  
Thus, they form an open cone within the collection of all indefinite metric polyhedra, an observation which was also pointed out by Rivin in \cite{Rivin}.




\vskip 20pt

\subsection{1-Lipschitz maps and an approximation Lemma.}

Let $(\X, \T, g)$ be an indefinite metric polyhedron, let $f: \X \to \R^{p,q}$ be a pl map, and let $\T'$ be a subdivision of $\T$ on which $f$ is simplicial.
We say that $f$ is {\it 1-Lipschitz}, or {\it short}, if $G(\s) - G_f(\s) \geq 0$ for all $\s \in \T'$.  
Note that if $\X$ is a Euclidean polyhedron and if $q=0$, then this definition is equivalent to the usual definition of a map being 1-Lipschitz.

The following technical Lemma will be needed in the construction of the maps $(h_i)$ is Subsection 6.5.

\vskip 10pt

\begin{lemma}\label{technical lemma}
Let $\s^k = \langle v_0, v_1, \hdots, v_k \rangle$ be a $k$-dimensional simplex, and for each $i$ and $j$ let $e_{ij}$ denote the edge between $v_i$ and $v_j$.  
Let $G_M$ be the quadratic form on $\s$ defined by 
	\begin{align*}
	&G_M(e_{01}) = \a^2  \\
	&G_M(e_{ij}) = M^2 \quad \text{for all } \{ i,j \} \neq \{ 0,1 \}
	\end{align*}
where $\a, M \in \R^{> 0}$.  
Let $f: \s \to \R^{k,1}$ be a simplicial map such that $G_f(e_{01}) = \b^2$ for some positive real $\b > \a$.  
Then for all $\e > 0$ there exists a constant $M' > 0$ satisfying the following.  
If $M > M'$ then there exists a pl map $f': \s \to \R^{k,1}$ satisfying
	\begin{enumerate}
	\item  For any segment $e$ of $e_{01}$ on which $f'$ is simplicial, we have that $G_{f'}(e) = G_M(e)$.  So $f'$ preserves the ``length" of $e_{01}$.
	\item  The map $f'$ is $1$-Lipschitz with respect to $G_M$.
	\item  The map $f'$ is an $\e$-approximation of $f$.
	\end{enumerate}
\end{lemma}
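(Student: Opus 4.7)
My approach is a Nash--Kuiper style corrugation adapted to the Minkowski setting. Since $f$ maps $e_{01}$ to a spacelike vector of Minkowski squared length $\b^2 > \a^2$, I use the single timelike direction of $\R^{k,1}$ to shorten this to $\a^2$ on many subsegments via a high-frequency, small-amplitude zigzag along $e_{01}$.

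Concretely, pick a large integer $n$ (depending on $\e$), subdivide $e_{01}$ into $n$ equal subsegments with vertices $p_0 := v_0,\, p_1, \ldots, p_n := v_1$, and extend this to a simplicial subdivision $\T'$ of $\s$ by coning from $v_2, \ldots, v_k$, so the top-dimensional simplices are $\s_i := \la p_{i-1}, p_i, v_2, \ldots, v_k\ra$. Writing $f(v_1) - f(v_0) = u + b\,e_{k+1}$ with $u \in \R^{k,0}$ and setting $A := \sqrt{b^2 + \b^2 - \a^2}$, the two values $s_\pm := -b \pm A$ are precisely the slopes of the $e_{k+1}$-component increment that turn $\bigl|\tfrac{1}{n}(u + be_{k+1}) + \tfrac{s_\pm}{n}\,e_{k+1}\bigr|^2_{\mathrm{Mink}}$ into $\a^2/n^2$. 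I define a piecewise linear zigzag $\phi : [0,1] \to \R$ with $\phi(0) = 0$, whose slope on each subinterval is drawn from $\{s_+, s_-\}$, choosing signs greedily so that $|\phi(t)| = O(1/n)$ uniformly; then set $f'(p_i) := f(p_i) + \phi(i/n)\,e_{k+1}$ and $f'(v_l) := f(v_l)$ for $l \ge 2$, extended simplicially on $\T'$. Conditions (1) and (3) then follow essentially by construction, since the slopes of $\phi$ were chosen precisely to make $G_{f'} = G_M$ on each segment of $e_{01}$, and $\|f' - f\|_\infty = O(1/n) < \e$ for $n$ large.

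The main obstacle is condition (2): the positive semidefiniteness of $G_M - G_{f'}$ on each subsimplex $\s_i$. Because condition (1) forces $(G_M - G_{f'})(\vec{w}_1) = 0$, PSD of the form requires $\vec{w}_1$ to lie in its kernel, so every cross-term $\la \vec{w}_1, \vec{w}_l\ra_{G_M - G_{f'}}$ (for $l \ge 2$) must also vanish on each $\s_i$. I expect this to fail for the naive construction above, so I would further modify $f'$ by small perturbations $\mu_l\,e_{k+1}$ with $|\mu_l| = O(1/n)$ at the vertices $v_l$ for $l \ge 2$ --- introducing, if necessary, auxiliary interior vertices in each $\s_i$ for additional degrees of freedom --- chosen to annihilate these cross-terms consistently across the zigzag sign pattern. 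Once that cancellation is achieved, the remaining transverse block of $G_M - G_{f'}$ in the basis $\{\vec{w}_l\}_{l \ge 2}$ has the form $M^2\,Q_0 + E$ with $Q_0$ a fixed positive-definite quadratic form and $\|E\|$ bounded in terms of $f, \a, \b, \e$ only; taking $M > M'(f, \a, \b, \e)$ makes this block, and hence the whole form, PSD on each subsimplex.
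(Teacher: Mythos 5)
Your construction of $f'$ and your verification of conditions (1) and (3) are essentially the paper's: the paper likewise subdivides $e_{01}$ into $N$ equal pieces, cones off from $v_2, \hdots, v_k$, and displaces alternate subdivision points by a fixed multiple of a timelike vector $\vec{v}$ Lorentz-orthogonal to $f(e_{01})$ --- a normalized version of your two-slope zigzag. The divergence is in condition (2). The paper argues softly: quadratic forms on a $k$-simplex are identified with points of $\R^{\binom{k+1}{2}}$ via their edge energies, the positive-definite forms form an open cone, the restriction of $G_M$ to each subsimplex recedes arbitrarily far from the boundary of that cone as $M \to \infty$ while the restriction of $G_{f'}$ stays bounded independently of $M$; since the two forms agree on the coordinate of the shared subsegment of $e_{01}$, that coordinate is projected away and the difference is placed in the projected cone for $M$ large. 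You instead confront directly the fact that $(G_M - G_{f'})(\vec{w}_1) = 0$ forces $\vec{w}_1$ into the kernel of $G_M - G_{f'}$, so every cross term $\la \vec{w}_1, \vec{w}_l \ra_{G_M - G_{f'}}$ must vanish; you are right that this is a constraint which no largeness of $M$ can repair, since the $M^2$ contributions cancel in those cross terms.

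However, your repair of that constraint is where the proposal has a gap. You propose to kill the cross terms by adding constants $\mu_l e_{k+1}$ at the $k-1$ vertices $v_2, \hdots, v_k$. But there are on the order of $n(k-1)$ cross-term conditions --- one for each pair consisting of a subsimplex $\s_i$ and a vertex $v_l$ --- and they genuinely depend on $i$: the target value moves with the base point $f(p_{i-1})$, and the coefficient of $\mu_l$ in the condition is $\mp \mu_l A / n$, alternating in sign with the zigzag. So $k-1$ parameters cannot satisfy them all. The fallback of ``auxiliary interior vertices in each $\s_i$'' is not developed: each new vertex creates new subsimplices still containing a segment of $e_{01}$, hence new degenerate directions and new cross-term conditions of their own, and you have not shown that this system closes up. As written, condition (2) is asserted conditionally on a cancellation you have not exhibited, so the proof of (2) is incomplete.
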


\vskip 10pt

The idea of the proof is as follows.  
We take a very fine subdivision of $f(e_{01})$ and ``wiggle" it in the one negative direction, decreasing the energy of this edge to $\a^2$.  
We then choose $M'$ large enough so that the resulting pl map is 1-Lipschitz.  

\begin{proof}

We first construct a subdivision $\T$ of $\s$ as follows.  
Choose a large even positive integer $N$ and subdivide the edge $e_{01}$ into $N$ equidistant subintervals.  
Label these new vertices $v_0 = w_0, w_1, w_2, \hdots, w_N = v_1$.  
We then extend to a subdivision over all of $\s$ by gluing in a $(k-1)$-simplex $\langle w_i, v_2, v_3, \hdots, v_k \rangle$ for each $i$, and correspondingly inserting $(N+1)$ $k$-simplices of the form $\langle w_{i-1}, w_i, v_2, \hdots, v_k \rangle$.  
Let $e_i$ denote the edge between $w_{i-1}$ and $w_i$, and note that $G_M(e_i) = \frac{\a^2}{N^2}$ for all $i$.  

Let us now construct the map $f'$, which will be simplicial over the subdivision $\T$.  
First off, define $f'(v_i) = f(v_i)$ for all $0 \leq i \leq k$.  
So let us define $f'$ over $w_1, \hdots, w_{N-1}$.  
If $i$ is even then define $f'(w_i) = f(w_i)$.  
For $i$ odd, define
	\begin{equation*}
	f'(w_i) = f(w_i) + \frac{\sqrt{\b^2 - \a^2}}{N} \vec{v}
	\end{equation*}
where $\vec{v}$ is a vector that is Lorentz orthogonal to $f(e_{01})$ and satisfying $\langle \vec{v}, \vec{v} \rangle = -1$ (see Figure \ref{Figure for technical lemma}).  
Such a vector $\vec{v}$ exists since $\langle f(e_{01}), f(e_{01}) \rangle = \b^2 > 0$.

For $i$ odd, let us compute the energy of the line segment between $f'(w_{i-1})$ and $f'(w_i)$ (which will be the same when $i$ is even, since $(i-1)$ will then be odd).  
	\begin{align*}
	&\langle f'(w_i) - f'(w_{i-1}), f'(w_i) - f'(w_{i-1}) \rangle  \\
	&= \left\langle f(w_i) + \frac{\sqrt{\b^2 - \a^2}}{N} \vec{v} - f(w_{i-1}),  f(w_i) + \frac{\sqrt{\b^2 - \a^2}}{N} \vec{v} - f(w_{i-1}) \right\rangle  \\
	&= \langle f(w_i) - f(w_{i-1}), f(w_i) - f(w_{i-1}) \rangle + \frac{\b^2 - \a^2}{N^2} \langle \vec{v}, \vec{v} \rangle  \\
	&= \frac{\b^2}{N^2} - \left( \frac{\b^2 - \a^2}{N^2} \right) = \frac{\a^2}{N^2}.
	\end{align*}
Thus $G_{f'}(e_i) = \frac{\a^2}{N^2} = G_M(e_i)$.  
Also, by choosing $N$ large we can ensure that $f'$ is as close of an approximation to $f$ as we like.  
Therefore, we may choose $N$ large enough so that $f'$ satisfies conditions (1) and (3) of the Lemma.

For condition (2), Let $W_i$ denote the $k$-simplex $\langle w_{i-1}, w_i, v_2, \hdots, v_k \rangle \in \T$.  
Let $G_i^M$ denote the quadratic form induced by $G_M$ on $W_i$, and let $G_i'$ denote the quadratic form on $W_i$ induced by the map $f'$.

Recall that the collection of (flat) Euclidean metrics on a $k$-dimensional simplex forms an open cone $\mathcal{C}$ in $\R^n$ (with $n = $ $k+1 \choose 2$) where the axes of $\R^n$ are parameterized by the energies of the edges of the simplex.  
The cone $\mathcal{C}$ is obviously symmetric about the line segment starting at the origin and passing through $(1,1,\hdots,1)$, since each edge is the same as any other.  
We know that $G_i^M(e_i) = \frac{\a^2}{N^2} = G_i'(e_i)$, and so let us project $\R^n$ onto the hyperplane spanned by the other $n-1$ edges (which we will denote $\R^{n-1}$).  
Let $g_i^M$, $g_i'$, and $\mathscr{C}$ denote the images of $G_i^M$, $G_i'$, and $\mathcal{C}$ under this projection, respectively.  
Due to the symmetry of $\mathcal{C}$, $\mathscr{C}$ is just the corresponding cone in $\R^{n-1}$.   

Now for $M > \a$ the quadratic form $G_M$ is positive definite.  
That is, $G_M \in \mathcal{C}$.  
Thus, each induced metric $G_i^M \in \mathcal{C}$ and therefore $g_i^M \in \mathscr{C}$.  
Note that for $N$ large, $g_i^M$ is close to the line segment at the center of $\mathscr{C}$.  
Let $\d_i = d(g_i^M, \partial \mathscr{C})$ denote the distance from $g_i^M$ to the boundary of the cone $\mathscr{C}$.  
Then we have that $\d_i > 0$ and $\lim_{M \to \infty} \d_i = \infty$.  
So we can find some positive $M_i$ such that $g_i^M - g_i' \in \mathscr{C}$ for all $M \geq M_i$.  
Letting $M' = \max \{ M_i \}$ completes the proof.

\end{proof}

\begin{remark}\label{technical lemma remark}
First note that this Lemma obviously holds for any map into $\R^{p,q}$ if $p \geq k$ and $q \geq 1$.
But also notice that in the assumptions of the Lemma we did not need the full strength that $G_M(e_{ij}) = M$ for $\{ i,j \} \neq \{ 0,1 \}$.  
We just needed that the form $G_M$ was positive definite, and that $d(G_M, \partial \mathcal{C}) \to \infty$ as $M \to \infty$.  

Finally, note that we could apply this Lemma to a simplicial map $f$ defined over a locally finite simplicial complex with respect to some specified edge $e$ (and where the conclusion would be that $f'$ is 1-Lipschitz on the closed star of $e$).  
The edge $e$ may be contained in more than one simplex, but since the complex is locally finite we just subdivide using the largest $N$ required by any simplex containing $e$.
\end{remark}

\begin{figure}
\begin{center}
\begin{tikzpicture}[scale=0.85]

\draw (1,0) -- (4,2) -- (7,0) -- (1,0);
\draw[fill=black!] (1,0) circle (0.3ex);
\draw[fill=black!] (4,2) circle (0.3ex);
\draw[fill=black!] (7,0) circle (0.3ex);
\draw (2.5,1.5)node[left]{$\b^2$};
\draw (4,3)node{$f(\s)$};

\draw[->] (8,1) -- (9.5,1);

\draw (13.1,2) -- (16,0) -- (10,0);
\draw (10,0) -- (10.2,0.6);
\draw[dotted] (10.2,0.6) -- (10.6,0.4);
\draw (10.6,0.4) -- (10.8,1);
\draw[dotted] (10.8,1) -- (11.2,0.8);
\draw (11.2,0.8) -- (11.4,1.4);
\draw[dotted] (11.4,1.4) -- (11.8,1.2);
\draw (11.8,1.2) -- (12,1.8);
\draw[dotted] (12,1.8) -- (12.4,1.6);
\draw (12.4,1.6) -- (12.6,2.2);
\draw[dotted] (12.6,2.2) -- (13.1,2);
\draw (10.2,0.6) -- (16,0);
\draw[dotted] (10.6,0.4) -- (16,0);
\draw (10.8,1) -- (16,0);
\draw[dotted] (11.2,0.8) -- (16,0);
\draw (11.4,1.4) -- (16,0);
\draw[dotted] (11.8,1.2) -- (16,0);
\draw (12,1.8) -- (16,0);
\draw[dotted] (12.4,1.6) -- (16,0);
\draw (12.6,2.2) -- (16,0);

\draw[fill=black!] (10,0) circle (0.3ex);
\draw[fill=black!] (13.1,2) circle (0.3ex);
\draw[fill=black!] (16,0) circle (0.3ex);
\draw[fill=black!] (10.2,0.6) circle (0.3ex);
\draw[fill=black!] (10.6,0.4) circle (0.3ex);
\draw[fill=black!] (10.8,1) circle (0.3ex);
\draw[fill=black!] (11.2,0.8) circle (0.3ex);
\draw[fill=black!] (11.4,1.4) circle (0.3ex);
\draw[fill=black!] (11.8,1.2) circle (0.3ex);
\draw[fill=black!] (12,1.8) circle (0.3ex);
\draw[fill=black!] (12.4,1.6) circle (0.3ex);
\draw[fill=black!] (12.6,2.2) circle (0.3ex);

\draw[->] (10.5,1.8) -- (11.2,1.2);
\draw (10.5,1.8)node[above]{$\frac{\a^2}{N^2}$};
\draw (13,3)node{$f'(\s)$};

\end{tikzpicture}
\end{center}
\caption{The construction of $f'$ in Lemma \ref{technical lemma} (with $N=10$)}
\label{Figure for technical lemma}
\end{figure}
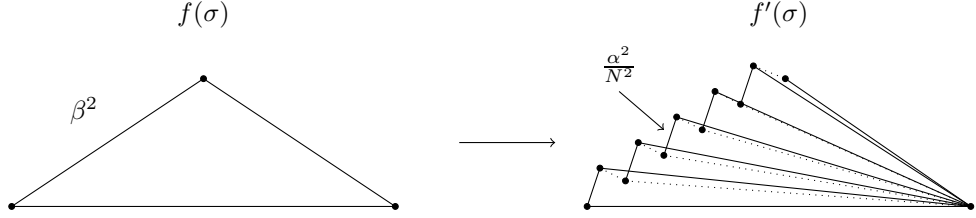

\vskip 20pt

\subsection{Isometric embeddings of polyhedra into Euclidean space}

We now want to state three results pertaining to isometries of Euclidean polyhedra into Euclidean space.  
This first Theorem was proved by Krat in \cite{Krat} for the case when $n=2$, and then for general dimensions by Akopyan in \cite{Akopyan}.

\vskip 10pt

\begin{theorem}[Krat \cite{Krat}, Akopyan \cite{Akopyan}]\label{Akopyan}
Let $\P$ be an $n$-dimensional Euclidean polyhedron and let $\e > 0$.  
Let $f:\P \rightarrow \mathbb{E}^N$ be a 1-Lipschitz map with $N \geq n$.  
Then there exists a pl isometry $h: \P \to \mathbb{E}^N$ which is an $\e$-approximation of $f$.
\end{theorem}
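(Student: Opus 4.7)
The plan is to build $h$ as the uniform limit of a sequence of PL $1$-Lipschitz maps $f_0, f_1, f_2, \ldots$, each obtained from its predecessor by a small local corrugation that reduces the total length-defect relative to the intrinsic metric $G$. First I would reduce to the case where $f$ is already PL: a standard uniform PL approximation on a fine enough subdivision, combined with a small $(1{-}\delta)$-scaling to preserve the $1$-Lipschitz property, produces a PL $1$-Lipschitz map within $\e/2$ of $f$, so it suffices to run the argument from such an $f_0$.

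The workhorse is a single-edge defect-reduction lemma in the Euclidean setting, exactly analogous to Lemma \ref{technical lemma}: given a PL $1$-Lipschitz map $g: \P \to \E^N$ simplicial on a subdivision $\T$, an edge $e \in \T$ with positive defect $\Delta_e := G(e) - G_g(e) > 0$, and any $\eta > 0$, one produces a PL $1$-Lipschitz map $g'$ that agrees with $g$ off the open star of $e$, is $\eta$-close to $g$, and has induced quadratic form matching $G$ on every subedge of $e$ in the refined subdivision. The construction is a corrugation: subdivide $e$ into a large even number $N_e$ of equal subintervals and, at the odd-indexed interior subdivision vertices, displace $g$ by a vector of length $\sqrt{\Delta_e}/N_e$ in a direction of $\E^N$ perpendicular to the image of $\text{St}(e)$. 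The Pythagorean identity makes each zigzag sub-edge attain exactly the intrinsic length, and by choosing $N_e$ large the displacement becomes smaller than the defect on every edge from a new vertex $w_i$ to a non-$e$ vertex $v_j$, preserving $1$-Lipschitz globally. The case $N = n$ leaves no ambient perpendicular direction, and there one substitutes Akopyan's folding construction, which reflects alternate subintervals across codimension-one hyperplanes in $\E^n$.

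The proof then iterates: enumerate the edges appearing across successive refinements of the starting subdivision, and apply the defect-reduction step to each in turn with tolerances $\eta_k$ satisfying $\sum_k \eta_k < \e/2$. The main obstacle, and the reason the theorem is delicate, is the bookkeeping. Each corrugation creates many new edges that inherit their own positive defects; moreover, one must verify at each stage that $N_e$ can really be chosen so that the perpendicular push does not break $1$-Lipschitz on \emph{any} edge of the star (which can fail in degenerate configurations where some edge to a non-$e$ vertex already happens to have zero defect). Organizing the order of enumeration and the choice of parameters so that the global maximum defect decays to $0$ along the sequence is the heart of Akopyan's refinement of Krat's two-dimensional argument. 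Once this is arranged, summability $\sum_k \eta_k < \e/2$ gives uniform convergence $f_k \to h$ with $\| h - f \|_\infty < \e$, and the vanishing of defects in the limit forces $h$ to be a PL isometry.
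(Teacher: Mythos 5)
This statement is not proved in the paper at all: it is quoted verbatim from Krat and Akopyan as an external result (the paper only proves the Lorentzian analogue, Lemma \ref{technical lemma}, where the ``wiggling'' happens in a negative direction and $1$-Lipschitzness is recovered by letting the intrinsic edge lengths $M$ blow up). So there is no internal proof to compare yours against, and I can only assess your sketch on its own terms.

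Your corrugation mechanism is the right basic idea, but the proposal has two genuine gaps at the final step. First, a uniform limit of PL maps is not PL: your scheme performs infinitely many corrugations, each on an ever finer subdivision, so even if $f_k \to h$ uniformly there is no reason $h$ is simplicial on any locally finite subdivision; yet the theorem asserts $h$ is a \emph{pl} isometry. Second, ``vanishing of defects in the limit forces $h$ to be an isometry'' does not follow: the length functional is only \emph{lower} semicontinuous under uniform convergence, so from $\ell(f_k \circ \g) \to \ell(\g)$ you can only conclude $\ell(h \circ \g) \leq \liminf_k \ell(f_k \circ \g) = \ell(\g)$, which is the inequality you already had from shortness. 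Ruling out loss of length in the limit requires quantitative control beyond $C^0$-closeness (this is exactly the classical difficulty in Nash--Kuiper-type arguments), and the actual Krat/Akopyan proofs are structured to avoid it, e.g.\ by arranging that the exact length-fix on each simplex is achieved in finitely many operations rather than only asymptotically. A smaller but still real issue: a single direction perpendicular to the image of the whole star of $e$ need not exist when $N$ is close to $n$ (the images of the $n$-simplices in the star can jointly span $\E^N$), so the displacement direction has to be chosen per simplex or the shortness on the new edges argued differently; your appeal to ``folding'' only when $N=n$ does not cover this.
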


\vskip 10pt

The following Theorems from \cite{Minemyer1} will be needed for the proofs of the Main Theorem and of Lemma \ref{Key Lemma}. 
Theorem \ref{Minemyer} is essentially the Krat/Akopyan Theorem associated to embeddings and is needed to prove the Main Theorem.  
Theorem \ref{Minemyer1} is very similar to Theorem \ref{Minemyer} and is used to prove Lemma \ref{Key Lemma}.  
The only difference between these two Theorems is that Theorem \ref{Minemyer1} obtains a lower dimensionality for the target Euclidean space at the sacrifice of our isometric embedding no longer being piecewise linear.

\vskip 10pt

\begin{theorem}\label{Minemyer}
Let $\P$ be an $n$-dimensional Euclidean polyhedron and let $\e > 0$.  
Let $f:\X \rightarrow \E^{N}$ be a 1-Lipschitz map with $N \geq 3n$.
Then there exists an piecewise linear isometric embedding $h: \X \rightarrow \mathbb{E}^{N}$ which is an $\e$-approximation of $f$. 
\end{theorem}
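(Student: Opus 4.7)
My plan is to derive Theorem \ref{Minemyer} from Theorem \ref{Akopyan} by first converting $f$ into a 1-Lipschitz pl embedding of $\P$, and then carrying out the Krat--Akopyan isometric correction with enough control that injectivity is preserved. The hypothesis $N \geq 3n$ is more than enough to produce pl embeddings of $n$-polyhedra by general position (the Menger--N\"{o}beling bound is $2n+1$), so the task is to upgrade an isometry to an isometric embedding using the excess dimensions.

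The first step is to build a 1-Lipschitz pl embedding $g:\P \to \E^N$ with $\|g-f\|_\infty < \e/2$. Pick a sufficiently fine triangulation $\T$ of $\P$ and a small $\eta > 0$. Replace $f$ by $(1-\eta)f$ to create slack in the short condition, perturb the values at the vertices of $\T$ into general position in $\E^N$, and extend simplicially. Since $N \geq 3n \geq 2n+1$, a generic simplicial map is an embedding, and for $\T$ fine and $\eta$ small the resulting map $g$ is strictly short and $\e/2$-close to $f$. Assuming for simplicity that $\P$ is compact, let $\rho > 0$ denote the minimum distance in $\E^N$ between $g$-images of any two disjoint closed simplices of $\T$; the locally finite case is handled analogously with a locally varying $\rho$.

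Next I apply Theorem \ref{Akopyan} to the strictly short pl map $g$ with approximation parameter $\delta := \min\{\e/2,\rho/3\}$ to produce a pl isometry $h:\P \to \E^N$ with $\|h-g\|_\infty < \delta$. In particular $\|h-f\|_\infty < \e$, so only injectivity of $h$ remains to check. Because $\|h-g\|_\infty < \rho/3$, the images of disjoint closed simplices of $\T$ remain disjoint under $h$, so any failure of injectivity must occur inside a single star $\mathrm{St}(v)$ of $\T$. I expect this local injectivity check to be the main obstacle. The Krat--Akopyan construction produces $h$ by fine subdivision of $\T$ together with carefully chosen displacements of the newly introduced vertices; to keep the output an embedding, these displacements must at every stage avoid the image of previously constructed pieces. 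Here the hypothesis $N \geq 3n$ pays off: the $n$ extra dimensions beyond the $2n+1$ used for the embedding furnish a general-position pattern for the displacements on each star, which can be arranged compatibly across adjacent simplices (agreeing exactly on common faces, otherwise separated). The plan is therefore to rerun the Krat--Akopyan induction one skeleton level at a time, carrying along the additional invariant that at each stage the partially constructed pl isometry is still an embedding; threading this invariant through the recursion, rather than the underlying isometric correction itself, is the heart of the argument.
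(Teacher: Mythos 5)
First, a point of reference: the paper does not actually prove Theorem \ref{Minemyer} --- it is imported verbatim from \cite{Minemyer1} --- so there is no in-paper argument to compare against; your proposal has to be judged on its own terms. The reduction you set up is sound: replacing $f$ by a strictly short generic simplicial embedding $g$, and then invoking Theorem \ref{Akopyan} with approximation parameter less than $\rho/3$, does correctly reduce global injectivity of $h$ to injectivity on closed stars of the triangulation. The gap is at exactly the step you yourself flag as ``the heart of the argument'': the claim that the Krat--Akopyan isometric correction can be carried out injectively on each star once $N \geq 3n$. That claim is not a consequence of Theorem \ref{Akopyan}; it is essentially the theorem you are trying to prove. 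The corrugations that force the edge lengths to come out exactly right necessarily fold each simplex back over itself repeatedly, and the displacements of the newly introduced vertices are constrained by exact quadratic length equations, so they cannot simply be ``put in general position'': one has to show that the solution variety of those length constraints is large enough to avoid the previously constructed image at every stage of the induction, and this is precisely where the dimension hypothesis must be used quantitatively. No such count appears in your sketch. (Note also that the surplus over the embedding dimension is $3n-(2n+1)=n-1$, not $n$, and it vanishes when $n=1$, so any argument resting on ``$n$ extra dimensions'' needs repair.)

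As a structural comparison: the route suggested by the source \cite{Minemyer1} and by Theorem \ref{Minemyer1} of this paper is not to re-open Akopyan's induction and thread an injectivity invariant through it, but to secure injectivity by a splitting device --- reserve part of the available quadratic form (equivalently, a controlled summand of the metric) for a map that separates points by construction, and apply the Akopyan-type length correction only to the complementary summand, so that injectivity never has to be extracted from the corrugation process at all. Your approach could in principle be completed, but it requires rebuilding the Krat--Akopyan construction from scratch with quantitative general-position control at every subdivision stage; as written, that construction is asserted rather than carried out, so the proof is incomplete.
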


\vskip 10pt

\begin{theorem}\label{Minemyer1}
Let $\P$ be an $n$-dimensional Euclidean polyhedron and let $\e > 0$.  
Let $f:\X \rightarrow \E^{N}$ be a 1-Lipschitz map with $N \geq 2n+1$.
Then there exists an (continuous) isometric embedding $h: \X \rightarrow \mathbb{E}^{N}$ which is an $\e$-approximation of $f$. 
\end{theorem}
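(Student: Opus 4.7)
The plan is to combine Theorem~\ref{Akopyan} with a Nash--Kuiper style successive approximation that exploits the general position available when $N \geq 2n + 1$. Theorem~\ref{Akopyan} already produces a pl isometry $\e/3$-close to $f$, so the only remaining task is to upgrade ``pl isometry'' to ``isometric embedding.'' The gain over Theorem~\ref{Minemyer} (which asks $N \geq 3n$) is precisely that giving up piecewise linearity in the final map yields enough flexibility to work in codimension $n+1$ rather than $2n$.

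First I would apply Theorem~\ref{Akopyan} with tolerance $\e/3$ to obtain a pl isometry $h_0 : \P \to \E^N$ with $\|h_0 - f\|_\infty < \e/3$, simplicial on some subdivision $\T_0$. Since $\dim(\s \times \t) \leq 2n < N$ for disjoint closed simplices $\s, \t \in \T_0$, a generic pl perturbation of $(1 - \d_0)h_0$ at the vertices of $\T_0$ (for a tiny $\d_0 > 0$ chosen so that there is room to remain 1-Lipschitz) produces a pl \emph{embedding} $g_0 : \P \to \E^N$ that is still 1-Lipschitz and arbitrarily close to $h_0$. Now apply Theorem~\ref{Akopyan} again, this time to $g_0$, with a tolerance $\e_1$ much smaller than the separation produced by $g_0$; this returns a new pl isometry $h_1$ that inherits injectivity on the bulk of $\P$ but may introduce tiny new self-intersections. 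Iterate, alternating ``perturb to embedding'' with ``re-isometrize via Akopyan,'' choosing tolerances $(\e_k)$ decreasing rapidly with $\sum \e_k < \e/3$. The $h_k$ then form a Cauchy sequence whose uniform limit $h$ is continuous and within $\e$ of $f$.

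The main obstacle is showing that the limit $h$ is simultaneously isometric and injective. Isometry is the subtler of the two points; invoking only the lower semicontinuity of length under uniform convergence would merely give that $h$ is 1-Lipschitz. One must track that each $h_k$ preserves the edge lengths of its triangulation $\T_k$ exactly and that, by arranging $\T_{k+1}$ to refine $\T_k$ with mesh tending to zero, the collection of paths on which length is preserved becomes dense in the space of rectifiable paths on $\P$. Injectivity is even more delicate: later corrections risk recreating self-intersections destroyed earlier. The remedy is a priority / diagonal scheme in which, at stage $k$, the embedding $g_k$ has a definite separation radius $\eta_k > 0$, and every subsequent tolerance $\e_{k+1}, \e_{k+2}, \dots$ must be chosen smaller than a $k$-dependent fraction of $\eta_k$, guaranteeing that no pair of points separated at any finite stage is ever recollided. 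This priority bookkeeping, relying crucially on $N \geq 2n+1$ at the embedding step of every stage, is the technical heart of the proof.
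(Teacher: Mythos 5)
First, a point of orientation: this paper does not prove Theorem~\ref{Minemyer1} at all --- it is imported verbatim from \cite{Minemyer1}, where the full argument lives. So there is no internal proof to compare against; what you have written is a reconstruction of the external proof. Your overall strategy (alternate Akopyan-type re-isometrization with general-position perturbations to embeddings, using $N \geq 2n+1$ so that a generic pl map of an $n$-complex is injective, and a Nash-style priority scheme over the sets $\Delta_k = \{(x,y) : d(x,y) \geq 2^{-k}\}$ to keep injectivity in the limit) is indeed the strategy of the cited proof, and the general-position step is correct: disjoint simplices of total dimension $\leq 2n$ have generically disjoint images exactly when $N \geq 2n+1$.

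The genuine gap is in your treatment of isometry of the limit. You correctly observe that lower semicontinuity of length only yields that $h$ is $1$-Lipschitz, but the fix you propose --- that the paths whose lengths are preserved ``become dense in the space of rectifiable paths'' --- does not close the argument. Each $h_k$ preserves the edge lengths of $\T_k$, but the limit $h$ differs from $h_k$ by the entire tail of subsequent corrections, and a uniformly small perturbation of a map can decrease the length of a fixed path by a definite amount; density of the exactly-preserved paths at each \emph{finite} stage transfers nothing to $h$. What is actually needed is the quantitative one-sided perturbation estimate for the pullback length functional: for each $k$ there is a threshold $\d = \d(h_k, C, \l)$ such that any $g$ with $\|g - h_k\|_\infty < \d$ satisfies $\text{pull}_{h_k} \leq C\,\text{pull}_{g} + \l$ (Petrunin's Lemma~2.3 and Proposition~2.2 in \cite{Petrunin}; Section~2 of the present paper develops precisely the energy analogue in Theorem~\ref{perturbed energy theorem} and Corollary~\ref{perturbed energy corollary} because this is the only mechanism available). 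One must fold these thresholds into the choice of the tolerances $\e_{k+1}, \e_{k+2}, \dots$, alongside the injectivity constraints, to conclude $\ell(\g) \leq \ell(h \circ \g)$ and hence equality. Without invoking an estimate of this type, your limit map is only known to be a short embedding, not an isometric one.
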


\vskip 20pt

\section{Step 1:  Constructions of $\{ \D_i \}$, $\{ \G_i \}$, and $\G$}
Sections 4, 5, 6, and 7 are dedicated to proving the Main Theorem.  
In Section 4 we decompose $\D$ into an increasing union of finite sets $\{ \D_i \}$, and construct the set $\G = \bigcup_{i=1}^\infty \G_i$ of ``allowable" geodesics between points of $\D$.  
In this process we also construct collections $\{ \D_i' \}$ and $\{ \G_i' \}$ which will be necessary in the following Sections.

\subsection*{Constructing $\{ \D_i \}$}
In the statement of the Main Theorem, we are given an arbitrary dense countable subset $\D \subset \X$.  
The reason that $\D$ is required to be countable is so that we can write it as an increasing union of finite subsets $\D_i$.  
One simply constructs the collection $\{ \D_i \}$ recursively by defining $\D_1$ to contain a finite number of distinct points of $\D$, and then constructs $\D_{i+1}$ from $\D_i$ by adding a single new point from $\D$.

\subsection*{Simultaneously constructing $\{ \G_i \}, \{ \D_i' \}, \{ \G_i' \}, \G, \D',$ and $\G'$}
The construction is recursive.  
We begin with $i=1$, and then show how to move from stage $i-1$ to stage $i$.

Let $\{ x_i, x_j \} \in \D_1$ and fix a geodesic $\g_{ij}: [0,d(x_i,x_j)] \to \X$ joining $x_i$ to $x_j$.  
Insert the geodesic $\g_{ij}$ into $\G_1$, and repeat this process for each pair of points in $\D_1$.
So notice that, rigorously, the elements of $\G_i$ for general $i$ will be functions from intervals into $\X$. 
In the remainder of the paper, the term {\it geodesic} will refer to a function whereas the term {\it geodesic segment} will refer to the image in $\X$ of a geodesic. 
Also notice that $\g_{ij}$ will have velocity $v_{ij} = 1$ at every point in its image.  
In our construction of $\G$, {\it every} geodesic will similarly be parameterized by arc-length.

Two geodesics $\g, \l \in \G_1$ have an \emph{allowable intersection} if they either have an empty intersection, they intersect in a unique point, or they intersect on an interval (that is, if there exist intervals $I \subseteq \text{domain}(\g)$ and $J \subseteq \text{domain}(\l)$ such that $\g (I) = \l (J)$). 
The intersection of $\g$ and $\l$ is {\it not} allowable if they intersect in a discrete set of points with cardinality greater than one. 
If two geodesics $\g, \l \in \G_1$ have a non-allowable intersection, then we will replace one of them (say $\l$) with a new geodesic $\bar{\l}$ whose intersection with $\g$ is allowable. 
Then since the cardinality of $\G_1$ is finite we will obtain a new set of geodesics (still called $\G_1$) in finitely many steps where the intersection of any two geodesics in $\G_1$ is allowable.
What follows next is a quick proof that this can be done, but this fact is also mentioned on pg. 267 of \cite{BBI}.

Suppose $a, b, c, e \in \D_1$ are such that $a \neq b$ and $c \neq e$, and let $\g_{ab}, \g_{ce}$ be geodesics in $\G_1$ connecting $a$ to $b$ and $c$ to $e$, respectively.  
Suppose that $\g_{ab}$ and $\g_{ce}$ have non-allowable intersection.  
Let $x$ be the ``first" point at which these two geodesics intersect.  
That is, let 
	\[
	\overline{x} = \text{inf} \{ y \in [0, d(a,b)] \; \vline \; \g_{ab}(y) \in \text{image}(\g_{ce}) \}
	\]
and let $x = \g_{ab}(\overline{x})$.
Similarly, let $y$ be the ``last" point at which $\g_{ab}$ and $\g_{ce}$ intersect.  
Then $\g_{ab}$ and $\g_{ce}$ give two different geodesics from $x$ to $y$.  
Change one of the geodesics, say $\g_{ce}$, to obtain a new geodesic $\overline{\g_{ce}}$ as follows.  
Let $\overline{\g_{ce}}$ agree with $\g_{ce}$ from $\g_{ce}(0)$ to $x$, let $\overline{\g_{ce}}$ agree with $\g_{ab}$ from $x$ to $y$, and lastly let $\overline{\g_{ce}}$ agree with $\g_{ce}$ from $y$ to $\g_{ce}(1)$.  
It is easy to check that the path $\overline{\g_{ce}}$ is still a geodesic from $c$ to $e$ and $\overline{\g_{ce}}$ intersects $\g_{ab}$ on an interval, which is an allowable intersection.  
Continuing this way we ``fix" $\G_1$ until all intersections are allowable (see Figure \ref{thirteenthfig}).

As an intermediary step to construct $\G_1'$, we use $\D_1$ and $\G_1$ to construct a new set of points $\D_1'$ as follows.
We first start with $\D_1 \subseteq \D_1'$.  
If a pair of geodesic segments from $\G_1$ do not intersect, then that pair contributes no new points to $\D_1'$.  
If a pair of geodesic segments from $\G_1$ intersects in a unique point within their interior, then add that intersection point to $\D_1'$.  
If two geodesic segments from $\G_1$ intersect on an interval, then add the two \emph{endpoints} of that interval to $\D_1'$.  
We should note here that since $\G_1$ is finite and since any pair of geodesics from $\G_1$ contributes at most two new points to $\D_1'$, the set $\D_1'$ is finite.

The set $\D_1'$ consists of all of the points in $\X$ that ``look like" they belong to $\D_1$.  
What we mean is that $\D_1'$ consists of all of the points of $\X$ that have multiple geodesic segments from $\G_1$ emanating from them.
Now, the set $\G_1'$ is geometrically the same as $\G_1$.  
To construct the geodesics in $\G_1'$, all we do is subdivide each geodesic of $\G_1$ at every new point of $\D_1'$ which it intersects (and the domain of each geodesic in $\G_1'$ is simply the restriction of the domain in $\G_1$).  
Then we add these geodesics to $\G_1'$.  
So as sets, there is really no relationship between $\G_1$ and $\G_1'$.  
But the union of the geodesic segments of the two sets coincide.
See Figure \ref{fourteenthfig} for a picture of obtaining $\D_1'$ and $\G_1'$ from $\D_1$ and $\G_1$.

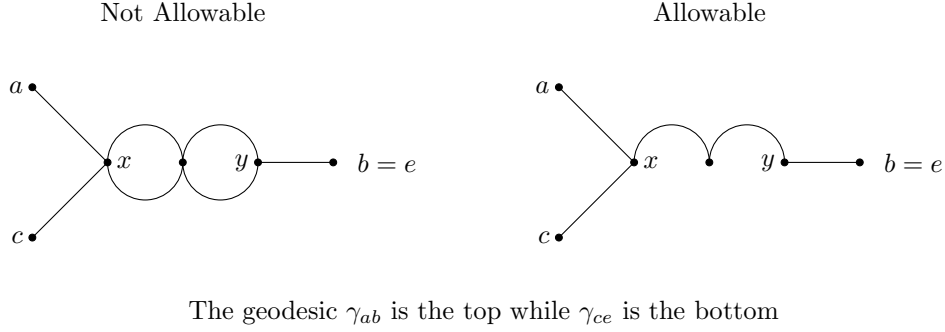
\begin{figure}
\begin{center}
\begin{tikzpicture}

\draw (-1, 1) -- (0,0);
\draw (-1, -1) -- (0,0);
\draw (1,0) arc (0:180:0.5cm);
\draw (1,0) arc (0:-180:0.5cm);
\draw (2,0) arc (0:180:0.5cm);
\draw (2,0) arc (0:-180:0.5cm);
\draw (2, 0) -- (3,0);
\draw[fill=black!] (-1,1) circle (0.3ex);
\draw[fill=black!] (-1,-1) circle (0.3ex);
\draw[fill=black!] (3,0) circle (0.3ex);
\draw[fill=black!] (0,0) circle (0.3ex);
\draw[fill=black!] (1,0) circle (0.3ex);
\draw[fill=black!] (2,0) circle (0.3ex);
\draw (0,0)node[right]{$x$};
\draw (2,0)node[left]{$y$};
\draw (-1,1)node[left]{$a$};
\draw (-1,-1)node[left]{$c$};
\draw (3.2,0)node[right]{$b=e$};
\draw (1,2)node{Not Allowable};
\draw (5,-2)node{The geodesic $\g_{ab}$ is the top while $\g_{ce}$ is the bottom};

\draw (6, 1) -- (7,0);
\draw (6, -1) -- (7,0);
\draw (8,0) arc (0:180:0.5cm);
\draw (9,0) arc (0:180:0.5cm);
\draw (9, 0) -- (10,0);
\draw[fill=black!] (6,1) circle (0.3ex);
\draw[fill=black!] (6,-1) circle (0.3ex);
\draw[fill=black!] (10,0) circle (0.3ex);
\draw[fill=black!] (7,0) circle (0.3ex);
\draw[fill=black!] (8,0) circle (0.3ex);
\draw[fill=black!] (9,0) circle (0.3ex);
\draw (7,0)node[right]{$x$};
\draw (9,0)node[left]{$y$};
\draw (6,1)node[left]{$a$};
\draw (6,-1)node[left]{$c$};
\draw (10.2,0)node[right]{$b=e$};
\draw (8,2)node{Allowable};

\end{tikzpicture}
\end{center}
\caption{An example of altering geodesics to make them ``allowable".}
\label{thirteenthfig}
\end{figure}

Now continuing recursively, suppose that we have constructed $\G_{i-1}$, $\D_{i-1}'$, and $\G_{i-1}'$.  
We construct $\G_i$ from $\G_{i-1}$ by adding in geodesics between the singleton in $\D_i \setminus \D_{i-1}$ and the points of $\D_{i-1}$. 
Now suppose that two geodesics in this set $\G_i$ have non-allowable intersection.  
Since all geodesics in $\G_{i-1}$ have allowable intersection, at least one of these geodesics must be ``new".  
We {\it always} alter new geodesics to make all of the geodesics in $\G_i$ allowable.  
In this way, we truly have $\G_{i-1} \subset \G_i$.  
From here, we construct $\D_i'$ and $\G_i'$ exactly as in the case when $i=1$.  
Notice that $\D_{i-1}' \subset \D_i'$ and, when considered as sets of geodesic segments, $\G_{i-1}' \subset \G_i'$.

Lastly, we construct $\G$, $\D'$, and $\G'$ by
	\[
	\G = \bigcup_{i=1}^\infty \G_i	\qquad	\D' = \bigcup_{i=1}^\infty \D_i'	\qquad	\G' = \bigcup_{i=1}^\infty \G_i'
	\]

\vskip 20pt

\section{Step 2:  Proof of Lemma \ref{Key Lemma}}\label{Proof of Key Lemma}
In Section 5 we prove Lemma \ref{Key Lemma}, which was stated in the Introduction.  
The proof of the Main Theorem involves a recursive construction, and the proof of Lemma \ref{Key Lemma} is essentially the base case of this recursive construction.
So to make things more clear in Section 6 (where we discuss the recursive step in this construction) we will include a subscript ``1" in all quantities which carry over to the proof of the Main Theorem (except for $\d$, which will have a subscript of ``0").

\vskip 20pt

\subsection*{Outline of the proof of Lemma \ref{Key Lemma}}
Given a finite subset $\D_1 \subset \X$, we construct $\G_1$, $F_1'$, and $\G_1'$ in exactly the same way as we did in Section 4.
We will then construct two collections of open sets $\mathscr{U}_1$ and $\mathscr{V}_1$ which will depend on positive parameters $\a_1$ and $\b_1$, respectively.  
The purpose of $\scrU_1$ is to isolate the points of $\D_1'$, and the purpose of $\scrV_1$ is to separate the geodesic segments contained in $\G_1'$.  
Lastly we construct a collection of open sets $\scrW_1$ which covers the complement of the unions of all of the sets contained in $\scrU_1$ and $\scrV_1$, and which depends on a positive constant $\e_1$.  
Then the collection $\O_1 := \scrU_1 \cup \scrV_1 \cup \scrW_1$ will be an open cover of $\X$ with mesh($\O_1) < \d_0$ and with order($\O_1) \leq n + 3$ (where $\d_0$ is the fixed positive constant in Lemma \ref{Key Lemma}).

Let $\N_1$ denote the nerve of $\O_1$.  
Since order$(\O_1) \leq n + 3$, dim$(\N_1) \leq n + 2$.  
Using a partition of unity we will construct a map $\psi: \X \to \N_1$ which satisfies that if $\psi(x) = \psi(x')$ then $\dx(x, x') < \d_0$.  
We then assign edge lengths to $\N_1$ to turn it into a Euclidean polyhedron.  
By choosing $\b_1$ sufficiently small we can use Theorem \ref{Bridson} (see Section 8) to ensure that $d_{\N_1}(\psi_1(x), \psi_1(y)) = \dx(x, y)$ for all $x, y \in \D_1$.  
Lastly we use Theorem \ref{Minemyer1} to obtain an intrinsic isometric embedding $h_1: \N_1 \rightarrow \mathbb{E}^{2(n + 2) + 1} = \mathbb{E}^{2n+5}$.  
Then the map $f_1:= h_1 \circ \psi_1$ will satisfy the conditions of Lemma \ref{Key Lemma}.

\begin{notation}
For $x \in \X$ and $r>0$ we will use $b(x,r)$ to denote the {\it open ball} about $x$ of radius $r$, and $B(x,r)$ to denote the {\it closed ball} about $x$ of radius $r$.
\end{notation}

\vskip 20pt

\subsection{Construction of the Open Covering $\O_1$}	
To construct $\O_1$ we will construct three collections of open sets $\mathscr{U}_1, \scrV_1,$ and $\scrW_1$ satisfying the following properties:
	\begin{enumerate}
		\item order($\scrU_1 \cup \scrV_1) = 3$
		\item order($\scrW_1$) $\leq n + 1$
		\item $\text{mesh}(\scrU_1) \leq \a_1, \, \text{mesh}(\scrV_1) \leq \b_1, \, \text{and } \text{mesh}(\scrW_1) \leq \e_1$ where $0 < \e_1 < \b_1 < \a_1 < \frac{\d_0}{3}$
		\item $\scrU_1 \cup \scrV_1 \cup \scrW_1$ is an open cover of $\X$.
	\end{enumerate}
Then we define $\O_1$ := $\scrU_1 \cup \scrV_1 \cup \scrW_1$.  
So we see immediately that $\O_1$ is an open cover of $\X$ with $\text{mesh}(\O_1) < \d_0$, and we will show that order($\O_1) \leq n+3$ (the fact that order($\O_1) \leq n+4$ is clear from conditions (1) and (2) above).  
We break down the construction of $\O_1$ into three parts.

\begin{figure}		
\begin{center}
\begin{tikzpicture}

\draw (0,2) -- (0,0);
\draw (0,-1) -- (0,0);
\draw (0,2) -- (3,2);
\draw (2,0) -- (3,2);
\draw (0,-1) -- (3,2);
\draw (2,0) -- (0,0);
\draw (0,2)node[left]{$a$};
\draw (0,-1)node[left]{$b$};
\draw (3,2)node[right]{$c$};
\draw (2,0)node[right]{$d$};
\draw[fill=black!] (0,2) circle (0.3ex);
\draw[fill=black!] (3,2) circle (0.3ex);
\draw[fill=black!] (0,-1) circle (0.3ex);
\draw[fill=black!] (2,0) circle (0.3ex);
\draw (2.5,1)node[right]{$\g_{cd}$};
\draw (1.5,2)node[above]{$\g_{ac}$};
\draw (0,1)node[left]{$\g_{ab} = \g_{ad}$};
\draw (0,-0.5)node[left]{$\g_{ab}$};
\draw (0.5,0)node[above]{$\g_{ad}$};
\draw (1.6,-0.1)node[below]{$\g_{ad} = \g_{bd}$};
\draw (0.2,-0.8)node[right]{$\g_{bc} = \g_{bd}$};
\draw (2,1)node[left]{$\g_{bc}$};
\draw (1,-2)node{$\D$ and $\G$};

\draw (4,0) -- (5,0);
\draw (4.9,0.1) -- (5,0);
\draw (4.9,-0.1) -- (5,0);

\draw (6,2) -- (6,0);
\draw (6,-1) -- (6,0);
\draw (6,2) -- (9,2);
\draw (8,0) -- (9,2);
\draw (6,-1) -- (9,2);
\draw (8,0) -- (6,0);
\draw (6,2)node[left]{$a$};
\draw (6,-1)node[left]{$b$};
\draw (9,2)node[right]{$c$};
\draw (8,0)node[right]{$d$};
\draw (6,0)node[left]{$e$};
\draw (7,0)node[above]{$f$};
\draw[fill=black!] (6,2) circle (0.3ex);
\draw[fill=black!] (9,2) circle (0.3ex);
\draw[fill=black!] (6,-1) circle (0.3ex);
\draw[fill=black!] (8,0) circle (0.3ex);
\draw[fill=black!] (6,0) circle (0.3ex);
\draw[fill=black!] (7,0) circle (0.3ex);
\draw (8.5,1)node[right]{$\g_{cd}$};
\draw (7.5,2)node[above]{$\g_{ac}$};
\draw (6,1)node[left]{$\g_{ae}$};
\draw (6,-0.5)node[left]{$\g_{be}$};
\draw (6.5,0)node[above]{$\g_{ef}$};
\draw (7.5,0)node[below]{$\g_{df}$};
\draw (6.2,-0.8)node[right]{$\g_{bf}$};
\draw (8,1)node[left]{$\g_{cf}$};
\draw (7,-2)node{$\D'$ and $\G'$};

\end{tikzpicture}
\end{center}
\caption{Constructing $\D'$ and $\G'$ from $\D$ and $\G$.}
\label{fourteenthfig}
\end{figure}

\vskip 20pt
	
\subsection*{The Construction of $\scrU_1$}
For each $x \in \D_1'$ define $\G_x$ to be the set of all geodesic segments from $\G_1'$ which do \emph{not} intersect the point $x$.  
So $\G_x$ does not contain any of the geodesic segments of $\G_1'$ which emanate from $x$.  

For all $x \in \D_1'$ define
	\begin{equation*}
	\eta_x = \inf_{\g \in \G_x} d (x, \text{im}(\g) )
	\end{equation*}
Note that since $\G_x$ is finite and since $d(x, \text{im}(\g) ) > 0$ for all $\g \in \G_x$, we have that $\eta_x > 0$ for all $x \in \D_1'$.  
Then define
	\begin{equation*}
	\a_1 = \frac{1}{2} \inf_{x \in \D_1'} \left\{ \eta_x, \, \frac{\d_0}{3} \right\} \qquad \text{and} \qquad \a_1' = \frac{1}{2} \a_1.
	\end{equation*}
It is clear that for all $x \in \D_1'$, $B(x, \a_1' ) \cap \g = \emptyset$ for all $\g \in \G_x$.  
We then define
	\begin{equation*}
	\scrU_1 = \left\{ b(x, \a_1' ) \right\}_{x \in \D_1'}
	\end{equation*}
Note that $\text{mesh}(\scrU_1) = \a_1 < \frac{\d_0}{3}$ and order($\scrU_1$) = 1.  

\vskip 20pt
	
\subsection*{The Construction of $\scrV_1$}
By construction, there is a geodesic in $\G_1$ connecting every pair of points in $\D_1$.  
The images of some of those geodesics may intersect various other geodesic segments of $\G_1$.  
But \emph{everywhere} that two distinct geodesic segments from $\G_1$ meet there exists a point from $\D_1'$ as their point of intersection (this is how we defined $\D_1'$).  
So if we consider the union of all of the geodesic segments from $\G_1$ and delete the parts of these geodesic segments that lie inside of some open set in $\scrU_1$, then what we have left will be a finite disjoint collection of geodesic segments in $\X$.  
That is, the subset of $\X$
	\begin{equation*}
	\Y_1 := \bigcup_{\g \in \G_1} \text{im}(\g) \; \setminus \; \bigcup_{U \in \scrU_1} U 
	\end{equation*}
consists of a finite disjoint collection of geodesic segments in $\X$ (see Figure \ref{fifteenthfig}).  

We now construct $\scrV_1$ in a very similar way as to how we constructed $\scrU_1$.  
For each geodesic segment $\g \in \Y_1$ define
	\begin{equation*}
	\mu_{\g} := \inf_{\l \in \Y_1 , \, \l \neq \g} d(\g, \l).
	\end{equation*}
Since $\Y_1$ is a disjoint finite set and since each geodesic segment is compact we see that $\mu_{\g} > 0$ for every $\g \in \Y_1$.  
So we then define
	\begin{equation*}
	\b_1 := \frac{1}{3} \inf_{\g \in \Y_1} \left\{ \mu_{\g}, \, \a_1 \right\}	\qquad	\text{and}	\qquad	 \b_1' := \frac{1}{3} \b_1.
	\end{equation*}

	\begin{figure}
\begin{center}
\begin{tikzpicture}

\draw (0,2) -- (0,0);
\draw (0,-1) -- (0,0);
\draw (0,2) -- (6,2);
\draw (4,0) -- (6,2);
\draw (0,-1) -- (6,2);
\draw (4,0) -- (0,0);
\draw (0,2)node[left]{$a$};
\draw (0,-1)node[left]{$b$};
\draw (6,2)node[right]{$c$};
\draw (4,0)node[right]{$d$};
\draw (0,0)node[left]{$e$};
\draw (2,0)node[above]{$f$};
\draw[fill=black!] (0,2) circle (0.3ex);
\draw[fill=black!] (6,2) circle (0.3ex);
\draw[fill=black!] (0,-1) circle (0.3ex);
\draw[fill=black!] (4,0) circle (0.3ex);
\draw[fill=black!] (0,0) circle (0.3ex);
\draw[fill=black!] (2,0) circle (0.3ex);

\draw[dotted] (0.25,2) arc (0:360:0.25cm);
\draw[dotted] (0.25,0) arc (0:360:0.25cm);
\draw[dotted] (0.25,-1) arc (0:360:0.25cm);
\draw[dotted] (2.25,0) arc (0:360:0.25cm);
\draw[dotted] (6.25,2) arc (0:360:0.25cm);
\draw[dotted] (4.25,0) arc (0:360:0.25cm);

\end{tikzpicture}
\end{center}
\caption{The collection $\scrU_1$.}
\label{fifteenthfig}
\end{figure}

Now let $\g \in \Y_1$ be arbitrary.  
Subdivide $\g$ into equidistant subintervals, each of which is of length less than $\b_1'$.  
Let us say that there are $m_{\g}$ subintervals of equal length $\ell_\g$.  
For each subinterval $\l_\g$ of $\g$, let $V_{\l} = b (\l_\g, \frac{1}{2} \ell_\g )$ and let $\scrV_{\g} = \{ V_{\l} \, | \, \l \text{ is a subinterval of } \g \}$.

There are three things to note.  
The first is that since we used $\frac{1}{2} \ell_\g$ as the radius, any $V_{\l}$ intersects only the neighborhoods of the subintervals adjacent to $\l$ (although the boundaries of the $V_\l$'s corresponding to subintervals with exactly one subinterval between them will intersect in that subintervals midpoint).  
So order$(\scrV_{\g}) = 3$, and the only points of order exactly $3$ (as opposed to $\leq 2$) are the midpoints of each subinterval $\l_\g$.  
Secondly, 
	\begin{equation*}
	\text{diam}(V_\l) \leq \frac{1}{2} \ell_{\g} + \ell_{\g} + \frac{1}{2} \ell_{\g} = 2 \ell_{\g} \leq 2 \b_1' = \frac{2}{3} \b_1 < \b_1.
	\end{equation*}  
Thus, mesh($\scrV_{\g}$) $< \b_1 < \a_1 < \d_0 $.  
Finally since diam($V_{\l}$) $< \b_1 \leq \frac{1}{3} \mu_{\g}$, $V_{\l}$ does not intersect any open sets of $\scrV_{\g'}$ for any different geodesic segment $\g' \in \Y_1$.

So we define 
	\begin{equation*}
	\scrV_1 = \bigcup_{V_{\l} \in \scrV_{\g}, \; \g \in \Y_1} V_{\l}. 
	\end{equation*}
By construction, mesh($\scrV_1$) $< \b_1 < \a_1$.  
Also, order($\scrV_1$) $= 3$.  
All that is left to show is that order($\scrU_1 \cup \scrV_1$) $= 3$.  
But this is pretty obvious.  
Any open set $U \in \scrU_1$ will only intersect the neighborhood of the initial segment of each geodesic emanating from the center of $U$.  
But none of these open sets associated with the initial segment of a geodesic in $\Y_1$ can intersect because they are associated with different geodesics (see Figure \ref{sixteenthfig}).  Thus no point of $\X$ can lie in an open set of $\scrU_1$ and in more than one open set of $\scrV_1$.  
And clearly the order of $\scrU_1 \cup \scrV_1$ must be at least three since order($\scrV_1) = 3$.

\vskip 20pt

\subsection*{The Construction of $\scrW_1$}

Let
	\begin{equation*}
	\mathcal{Z}_1 = \X \setminus \left( \bigcup_{U \in \scrU_1} U \cup \bigcup_{V \in \scrV_1} V   \right).
	\end{equation*}
Then for $\O_1$ to be an open cover of $\X$ we need $\scrW_1$ to cover $\mathcal{Z}_1$.  
The set $\mathcal{Z}_1$ defined above is a closed subset of $\X$.  
So when given the subspace topology, $\calZ_1$ will have covering dimension $ \leq n$.  
Choose some positive number $\e_1 < \b_1$ and let $\overline{\scrW}_1$ be a countable arbitrary (relatively) open covering of $\calZ_1$ with mesh($\overline{\scrW}_1$) $< \e_1/2$ and order $n + 1$ (which can be chosen to be countable since $\X$ is proper).  
We also require that $\overline{\scrW}_1$ be \emph{minimal}, meaning that every set in $\overline{\scrW}_1$ contains at least one point of $\X$ that is not contained in any other element of $\overline{\scrW}_1$.  

Note that while the elements of $\overline{\scrW}_1$ are open in $\calZ_1$ they are not, in general, open in $\X$.  
Members of $\overline{\scrW}_1$ which are contained entirely within the interior of $\calZ_1$ are open in $\X$, but members of $\overline{\scrW}_1$ which meet the boundary of $\calZ_1$ are not open in $\X$.

Every element of $\scrW_1$ will either be an element of $\overline{\scrW}_1$ or an ``enlarged" element of $\overline{\scrW}_1$ so that, in particular, every set in $\overline{\scrW}_1$ is contained in a member of $\scrW_1$.  
We will (possibly) modify each set of $\overline{\scrW}_1$ until we have obtained an open (in $\X$) cover of $\calZ_1$ that meets our specifications.

\begin{figure}
\begin{center}
\begin{tikzpicture}

\draw (0,0) -- (10,0);
\draw[fill=black!] (0,0) circle (0.3ex);
\draw (0,0)node[left]{$a$};
\draw[fill=black!] (10,0) circle (0.3ex);
\draw (10,0)node[right]{$b$};
\draw (1,0) arc (0:360:1cm);
\draw (11,0) arc (0:360:1cm);
\draw[dotted] (1,0.5) -- (9,0.5);
\draw[dotted] (1,-0.5) -- (9,-0.5);
\draw[dotted] (1,0.5) arc (90:270:0.5cm);
\draw[dotted] (2,0.5) arc (90:270:0.5cm);
\draw[dotted] (3,0.5) arc (90:270:0.5cm);
\draw[dotted] (4,0.5) arc (90:270:0.5cm);
\draw[dotted] (5,0.5) arc (90:270:0.5cm);
\draw[dotted] (6,0.5) arc (90:270:0.5cm);
\draw[dotted] (7,0.5) arc (90:270:0.5cm);
\draw[dotted] (8,0.5) arc (90:270:0.5cm);
\draw[dotted] (9,0.5) arc (90:270:0.5cm);
\draw[dotted] (9,-0.5) arc (-90:90:0.5cm);
\draw[dotted] (8,-0.5) arc (-90:90:0.5cm);
\draw[dotted] (6,-0.5) arc (-90:90:0.5cm);
\draw[dotted] (4,-0.5) arc (-90:90:0.5cm);
\draw[dotted] (2,-0.5) arc (-90:90:0.5cm);
\draw[dotted] (3,-0.5) arc (-90:90:0.5cm);
\draw[dotted] (5,-0.5) arc (-90:90:0.5cm);
\draw[dotted] (7,-0.5) arc (-90:90:0.5cm);
\draw[dotted] (1,-0.5) arc (-90:90:0.5cm);
\draw (1,0.1) -- (1,-0.1);
\draw (2,0.1) -- (2,-0.1);
\draw (3,0.1) -- (3,-0.1);
\draw (4,0.1) -- (4,-0.1);
\draw (5,0.1) -- (5,-0.1);
\draw (6,0.1) -- (6,-0.1);
\draw (7,0.1) -- (7,-0.1);
\draw (8,0.1) -- (8,-0.1);
\draw (9,0.1) -- (9,-0.1);
\draw (10,0.1) -- (10,-0.1);
\draw (5,-1)node{$\g$};
\draw (0,-1)node[below]{$U_a$};
\draw (10,-1)node[below]{$U_b$};

\draw (0,0) -- (0,2);
\draw (10,0) -- (10,2);

\end{tikzpicture}
\end{center}
\caption{The collection $\scrV_{\g}$.}
\label{sixteenthfig}
\end{figure}

If $\overline{W} \in \overline{\scrW}_1$ is contained entirely within the interior of $\calZ_1$ then we let $W = \overline{W}$ and we do not change the set at all.  
Otherwise $\overline{W}$ has nontrivial intersection with the boundary of $\calZ_1$.  
In this case we need to define an open set $W$ so that $\overline{W} \subset W$, $W$ does not intersect any geodesic segment from $\G_1$, and the resulting collection $\overline{\scrW}_1$, with $\overline{W}$ replaced by $W$, still has order $n + 1$ and mesh less than $\e$.  
For each point $x \in \overline{W} \cap \partial \calZ_1$ choose a small positive number $\e_x$ satisfying:
	\begin{enumerate}
	\item $\ds{b(x, \e_x )}$ does not intersect any geodesic in $\G_1$.  This is possible since $x \in \calZ_1$.
	\item $\ds{b(x, \e_x )}$ intersects at most $n + 1$ members of $\overline{\scrW}_1$.  This is possible since order$(\overline{\scrW}_1) = n + 1$.
	\item $\ds{ \e_x < \frac{1}{2} \left( \e_1 - \text{diam}(\overline{W}_1) \right) }$.  This is possible since diam($\overline{W}$) $< \e_1$.  
	\end{enumerate} 
	
Then we define
	\begin{equation*}
	W = \mathring{W} \cup \bigcup_{x \in \overline{W} \cap \partial \calZ_1} (b(x, \e_x ))
	\end{equation*}
where $\mathring{W}$ denotes the interior of $\overline{W}$.

As the union of open sets, $W$ is clearly open.  
The collection $W$ does not intersect any geodesic segment from $\G_1$ by item (1) above.  
We also know that diam($W$) $< \e_1$ by item (3) above.  
And by item (2), if we replace $\overline{W}$ with $W$ in $\overline{\scrW}_1$, then $\overline{\scrW}_1$ still has order $n + 1$.  
So we just continue this process until we have altered every set in $\overline{\scrW}_1$, leaving us with a new set $\scrW_1$ that is an open cover of $\calZ_1$, does not have any elements who intersect any geodesic segment from $\G_1$, has mesh less than $\e_1$, and has order $n + 1$.  
This process can be iterated since we began with a countable set.

\vskip 20pt
	
\subsection{Constructing and Metrizing the Nerve $\N_1$ of $\O_1$}
Let $\N_1$ denote the nerve of $\O_1$.  
We want to first argue that order($\O_1) \leq n + 3$, which then would imply that dim($\N_1) \leq n + 2$.  
Note that since order($\scrW_1) = n+1$ and order($\scrU_1 \cup \scrV_1) = 3$, we automatically have that order($\O_1) \leq n+4$.  
So we just need to trim $1$ off of this bound.

The key is to inspect $\scrU_1 \cup \scrV_1$.  
The only points of $\scrU_1 \cup \scrV_1$ which have order three are the midpoints of the subintervals $\{ \l_\g \}_{\g \in \G_1}$.
Without these points, $\scrU_1 \cup \scrV_1$ would have order two and then $\O_1 = \scrU_1 \cup \scrV_1 \cup \scrW_1$ would have order $\leq n+3$.  
But the midpoint of $\l_\g$ lies on the geodesic $\l \in \G_1$, and therefore this point does not lie in the closure of any member of $\scrW_1$ (for $\e_1$ chosen sufficiently small).  
So these points do not increase the order of $\O_1$ and therefore order($\O_1) \leq n+3$.  
Hence, dim($\N_1) \leq n+2$.  

The goal for the remainder of this Subsection is to endow $\N_1$ with a piecewise-flat Euclidean metric which satisfies the following.  
By our construction of $\scrU_1$ we know that for all $x \in \D_1$ there exists a unique 
$U_x \in \scrU_1$ such that $x \in U_x$.  
Let $u_x \in \calN_1$ denote the vertex corresponding to $U_x$.  
We metrize $\calN_1$ so that   
	\begin{equation}\label{eqn:3.3.3.1} 
	\dx(x, y) = \dn(u_x, u_y) \qquad \text{for all } \, x, y \in \D_1 .
	\end{equation}
	
Let $g$ denote the metric that we will put on $\N_1$.  
We need to define $g$ on the edges of $\N_1$, show that this leads to a Euclidean metric, and show that this metric satisfies Equation \eqref{eqn:3.3.3.1} above.

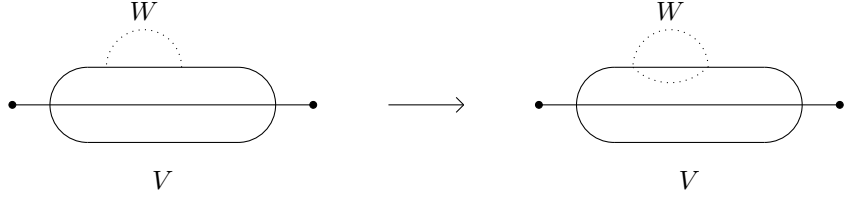
\begin{figure}
\begin{center}
\begin{tikzpicture}

\draw (0,0) -- (4,0);
\draw[fill=black!] (0,0) circle (0.3ex);
\draw[fill=black!] (4,0) circle (0.3ex);
\draw (1,0.5) -- (3,0.5);
\draw (1,-0.5) -- (3,-0.5);
\draw (1,0.5) arc (90:270:0.5cm);
\draw (3,-0.5) arc (-90:90:0.5cm);
\draw (2,-1)node{$V$};
\draw[dotted] (2.25,0.5) arc (0:180:0.5cm);
\draw (1.75,1)node[above]{$\overline{W}$};

\draw (5,0) -- (6,0);
\draw (5.9,0.1) -- (6,0);
\draw (5.9,-0.1) -- (6,0);

\draw (7,0) -- (11,0);
\draw[fill=black!] (7,0) circle (0.3ex);
\draw[fill=black!] (11,0) circle (0.3ex);
\draw (8,0.5) -- (10,0.5);
\draw (8,-0.5) -- (10,-0.5);
\draw (8,0.5) arc (90:270:0.5cm);
\draw (10,-0.5) arc (-90:90:0.5cm);
\draw (9,-1)node{$V$};
\draw[dotted] (9.25,0.5) arc (0:180:0.5cm);
\draw (8.75,1)node[above]{$W$};
\draw[dotted] (8.25,0.5) arc(-135:-45:0.7cm);

\end{tikzpicture}
\end{center}
\caption{Turning $\overline{W}$ into $W$.}
\label{seventeenthfig}
\end{figure}

\begin{remark}\label{remark about g}
The metric $g$ that we are about to define is convenient for proving Lemma \ref{Key Lemma}, but it is not quite what we need for proving the Main Theorem.  
That is why we name the metric ``$g$" instead of ``$g_1$".  
In Section 6 we will recursively define $g_i$ for all stages $i$, and from here it will be clear how to alter $g$ to obtain $g_1$.  
\end{remark}

Edges of $\N_1$ correspond to open sets of $\O_1$ which have nontrivial intersection.  
So to any edge of $\N_1$ there are two open sets of $\O_1$ associated to it.  
For any edge $e$ of $\N_1$ with one (or both) of its associated open sets coming from $\scrW_1$ we define $g(e) := M_1$, where $M_1$ is a large positive constant which we will define later.  

So now consider an edge $e$ of $\N_1$ which satisfies that neither of its vertices correspond to elements of $\scrW_1$, and let $a$ and $b$ denote its corresponding vertices.  
The vertices $a$ and $b$ correspond to open sets $A, B \in \scrU_1 \cup \scrV_1$.  
For any set $S \in \scrU_1 \cup \scrV_1$ let $x_S$ denote the center of $S$ if $S \in \scrU_1$ (so $x_S \in \D_1'$) or let $x_S$ denote the midpoint of the geodesic segment associated to $S$ if $S \in \scrV_1$.  
Define
	\begin{equation*}
	g(e) := \dx (x_A, x_B ).
	\end{equation*}
	
We now show that $g$ leads to a Euclidean metric if we choose $M_1$ sufficiently large and $\e_1$ sufficiently small.  
For $\g \in \G_1'$, let $\ell_\g$ denote the length of the subintervals which correspond to elements of $\scrV_1$ associated to $\g$.  
For $\e_1$ small enough any open set of $\scrW_1$ has non-trivial intersection with \emph{at most} two elements of $\scrU_1 \cup \scrV_1$. 
Also, there is no point in $\X$ which is contained in three of more members of $\scrU_1 \cup \scrV_1$.
Therefore any simplex of $\calN_1$ contains at most two vertices, and thus at most one edge, which does \emph{not} correspond to an open set in $\scrW_1$.

So now let $\s \in \calN_1$ be an arbitrary $k$-dimensional simplex.  
If zero or one of the vertices of $\s$ correspond to elements of $\scrU_1 \cup \scrV_1$ then every edge of $\s$ has length $M_1$.  
This clearly defines a Euclidean simplex. 
So now suppose that exactly two of the vertices of $\s$ correspond to elements of $\scrU_1 \cup \scrV_1$.  
Then there exists a unique edge $e'$ of $\s$ which satisfies that $g(e') \neq M_1$ (see Figure \ref{eighteenthfig}).  
But by our definition of $g$ we have that $0 < g(e') \leq \a_1$, where $\a_1$ is from our construction of $\scrU_1$.  
A straightforward calculation (which can be found in \cite{Minemyer Geoghegan}) shows that the simplex $\s$ admits an isometric embedding into $\mathbb{E}^k$ if and only if $0 < g(e') < \sqrt{\frac{2k}{k-1}} M_1$.  
Then letting $M_1 \geq \a_1$ verifies that our metric $g$ is Euclidean.

One final remark here is that, since $\scrU_1 \cup \scrV_1 $ is finite, there are only finitely many edges of $\calN_1$ which are not of length $M_1$.  
Therefore, even if $\X$ is not compact, Shapes($\calN_1$) (the isometry types of simplices of $(\N_1,g)$) is finite.

\begin{figure}
\begin{center}
\begin{tikzpicture}[scale=0.8]

\draw[fill=gray!20] (0,0) -- (3,0) -- (1.5,3) -- (0,0);
\draw (0.75,1.5)node[left]{$M$};
\draw (2.25,1.5)node[right]{$M$};
\draw (1.5,0)node[below]{$M$};

\draw[fill=gray!20] (5,0) -- (8,0) -- (6.5,3) -- (5,0);
\draw (5.75,1.5)node[left]{$M$};
\draw (7.25,1.5)node[right]{$M$};
\draw (6.5,0)node[below]{$g(e')$};

\draw (4,-1)node[below]{$g(e') \leq \a$};

\end{tikzpicture}
\end{center}
\caption{The 2 possibilities for simplices in $\calN_1$.}
\label{eighteenthfig}
\end{figure}
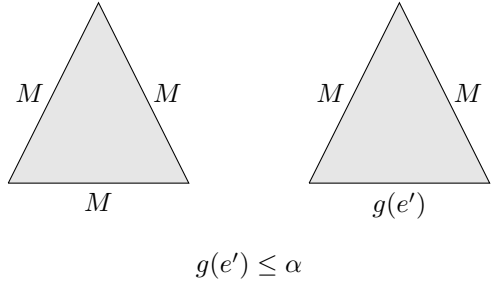

\vskip 20pt

\subsection*{Verifying $g$ Satisfies Equation \eqref{eqn:3.3.3.1}}
The key here is to choose $M_1$ large, apply Bridson's Theorem (Theorem \ref{Bridson}), and then choose $\e_1$ small.  
Recall that we want to show
	\begin{equation*}
	\dx(x, y) = \dn(u_x, u_y) \qquad \text{for all } \, x, y \in \D_1
	\end{equation*}
where for all $x \in \D_1$, $u_x$ denotes the vertex of $\calN_1$ corresponding to the unique element $ U_x \in \scrU_1$ containing $x$.

First recall that for any $x, y \in \D_1$ there exists a geodesic $\g_{x, y} \in \G_1$ joining $x$ to $y$.  
Also recall that $\g_{x, y}$ is the union of geodesic segments from $\G_1'$.  
Corresponding to each geodesic $\g$ in $\G_1'$ is a collection $\scrV_{\g}$ of ``interlinkiing" members of $\scrV_1$ (see Figure \ref{sixteenthfig}).  
These elements can be concatenated to form a chain $C = (v_{a_0}, v_{a_1}, ..., v_{a_{m-1}}, v_{a_m})$ within the 1-skeleton of $\calN_1$ connecting $v_{a_0} = u_x$ to $v_{a_m} = u_y$, where, technically, some of the $a_i$'s may come from members of $\scrU_1$ corresponding to intersection points in $\D_1'$.  
By how we defined our metric $g$ we see that 
	\begin{equation}\label{eqn:  length}
	\ell(C) = \dx (x, a_0) + \sum_{i = 1}^{m-2} \dx (a_i, a_{i+1}) + \dx (a_{m-1}, y) = \dx (x, y).
	\end{equation}  
Thus, since the distance in $\calN_1$ between any two points is defined as the infimum of the lengths of paths connecting those points, we see that
	\begin{equation*}
	\dn(u_x, u_y) \leq \dx (x, y).
	\end{equation*}
	
The reason why we need Theorem \ref{Bridson} is for the reverse inequality. 

To that end let $x, y \in \D_1$, let $\g$ be the unique geodesic in $\G_1$ joining $x$ to $y$, and let $C_{\g}$ be the chain in $\calN_1$ corresponding to $\g$ as described above.  
Consider an arbitrary taut chain $p$ in $\calN_1$ connecting $x$ to $y$.  
We first consider the case that the image of $p$, denoted im$(p)$, is contained in the open star $\text{st}(\text{im}(C_{\g}))$.  
Then we easily see that we can choose $M_1$ large enough so that $\ell(p) \geq \ell(C_{\g})$.  
We could also have that im$(p) \subset \text{st(im}(C_{\l}))$ for other geodesics $\l \in \G_1$.  
But in this case we clearly see that $\ell(p) \geq \ell(C_{\g})$.  

So now consider the case that the path $p$ leaves $\text{st(im}(C_{\g}))$ and is not contained in $\text{st(im}(C_{\l}))$ for other geodesics $\l \in \G_1$.  
We know that $p$ must correspond to a taut chain $C_p$ that leaves $\text{st(im}(C_{\g}))$.  
By choosing $\e_1$ arbitrarily small, we increase the (combinatorial) length of any chain that leaves $\bigcup_{\l \in \G_1} \text{st(im}(C_{\l}))$ while \emph{not} changing Shapes($\calN_1$).  
In essence, by taking $\e_1$ small and $M_1$ large we are ``blowing up" the metric outside of the neighborhoods of the images of the geodesic segments of $\G_1$ in $\calN_1$.  
By Bridson's Theorem \ref{Bridson} the length of the taut chain $C_p$ grows linearly with respect to the number of simplices it intersects.  
Thus if im($C_{p})$ leaves $\bigcup_{\l \in \G_1} \text{st(im}(C_{\l}))$ we can choose $\e_1$ small enough so that $\ell(p) \geq \ell(C_{\g})$.  
Then since $\D_1$ is finite, we can choose a positive $\e_1$ so that equation \eqref{eqn:3.3.3.1} is satisfied for every pair of points in $\D_1$.

\vskip 20pt
	
\subsection{The Maps $\psi$ and $h_1$ and Wrapping Up the Proof of Lemma \ref{Key Lemma}}
Fix a partition of unity $\ds{\{ \phi_i \}_{i \in I}}$ suboordinate to $\O_1$ (where $I$ is an indexing set for $\O_1$).  
Then define $\psi : \X \rightarrow \calN_1 $ by
	\begin{equation*}
	\psi(x) = \sum_{i \in I} \phi_i (x) \mathscr{O}_i
	\end{equation*}
where $\mathscr{O}_i$ is the vertex in $\calN_1$ corresponding to the open set $O_i \in \O_1$, and the sum is a point of $\calN_1$ expressed in barycentric coordinates.  
Using a partition of unity to construct continuous maps from a space to a nerve of a corresponding open cover is a standard construction.  
For more information please see \cite{Nagami} and/or \cite{Nagata}.  
A very important observation is that since every point of any geodesic in $\G_1$ is contained in at most 2 elements of $\O_1$, every geodesic of $\G_1$ is mapped by $\psi$ onto the 1-skeleton of $\calN_1$.  
Moreover, the midpoint of each geodesic segment corresponding to a member of $\scrV_1$ is necessarily mapped onto the vertex of $\N_1$ corresponding to that open set in $\scrV_1$.  
The reason that we used $\scrU_1$ and $\scrV_1$ to ``protect" the geodesics in $\G_1$ was to ensure that they mapped onto the 1-skeleton of $\calN_1$.

By the work done in Subsection 5.2 we know that $\dx (x, y) = \dn (\psi(x), \psi(y)) $ for all $x, y \in \D_1$.  
But also notice that if $\psi(a) = \psi(b)$ for some $a, b \in \X$ then we must have that $a$ and $b$ are both contained in the same members of $\O_1$.  
Otherwise $\psi(a)$ and $\psi(b)$ would reside in different simplices of $\calN_1$.  
Then since mesh($\O_1) < \d_0$ we have that $\dx (a, b) < \d_0$.  
Thus
	\begin{equation*}
	\psi(a) = \psi(b) \qquad \Longrightarrow \qquad \dx (a, b) < \d_0.
	\end{equation*}
Now by Theorem \ref{Minemyer1} there exists an isometric embedding $h_1: \calN_1 \rightarrow \mathbb{E}^{2(n + 2)+1} = \mathbb{E}^{2n+5}$.  
Letting $f := h_1 \circ \psi$ we obtain a map from $\X$ into $\mathbb{E}^{2n+5}$ which satisfies
	\begin{enumerate}
		\item If $f(x) = f(x')$, then $\dx(x, x') < \d_0 $
		
		\item The map $f$ is an isometry when restricted to $\D_1$.  That is, 
		\[
		d_{f(\X)}(f(x), f(x')) = \dx(x, x')
		\]
		for all $x, x' \in \D_1$.
	\end{enumerate}
This completes the proof of Lemma \ref{Key Lemma}.  
	\hfill $\qed$
	
\begin{remark}\label{remark about psi}
Similar to Remark \ref{remark about g} above about the metric $g$, the map $\psi$ defined above is different from what we will need to prove the Main Theorem.  
The actual map $\psi_1$ needed to prove the Main Theorem will be defined with the rest of the collection $(\psi_i)$ in Section 6 below.
\end{remark}

\vskip 20pt

\section{Step 3:  Recursively constructing the open covers $\{ \O_i \}$, Euclidean polyhedra $\{ (\N_i, g_i) \}$, and the functions $\{ \psi_i \}$, $\{ \v_{i+1,i} \}$, and $\{ h_i \}$}


\subsection{Construction of $\O_i$}
In Section 5 we have already defined $\scrU_1$, $\scrV_1$, $\scrW_1$, $\O_1$, and $\N_1$. 
We also have that mesh($\scrU_1) \leq \a_1$, mesh($\scrV_1) \leq \b_1$, and mesh($\scrW_1) \leq \e_1$ with $0 < \e_1 < \b_1 < \a_1 < \frac{\d_0}{3}$ for some fixed initial $\d_0$.

Now, let $\d_1$ denote the Lebesgue number of $\O_1$.  
Notice that $0 < \d_1 \leq \text{mesh}(\scrW_1) < \e_1$.  
For general $i$ we let $\d_{i-1}$ denote the Lebesgue number of $\O_{i-1}$, and we construct $\scrU_i$, $\scrV_i$, and $\scrW_i$ in such a way that 
	\begin{enumerate}
		\item order($\scrU_i \cup \scrV_i) = 3$
		\item order($\scrW_i$) $\leq n + 1$
		\item $\text{mesh}(\scrU_i) \leq \a_i, \, \text{mesh}(\scrV_i) \leq \b_i, \, \text{and } \text{mesh}(\scrW_i) \leq \e_i$ where $0 < \e_i < \b_i < \a_i < \frac{\d_{i-1}}{3}$
		\item $\O_i := \scrU_i \cup \scrV_i \cup \scrW_i$ is an open cover of $\X$ of order $n+3$ which is a refinement of $\O_{i-1}$.
	\end{enumerate}
The constructions of $\scrU_i$, $\scrV_i$, and $\scrW_i$ are detailed below, and in the case of $\scrU_i$ and $\scrW_i$ the constructions are nearly identical to that of Section 5.  
Also recall that in Section 4 we have already defined $\D_i$, $\G_i$, $\D_i'$, and $\G_i'$.

\vskip 20pt

\subsection*{Construction of $\scrU_i$}
This is, for the most part, identical to the construction of $\scrU_1$ from Section 5.  
At stage $i$ we will require that $\a_i$ is chosen small enough so that no member of $\scrU_i$ contains any ``midpoints" from stage $(i-1)$ (which will make sense after we construct $\scrV_i$ below).
Also, when constructing $\scrV_i$, we will make very small alterations to the sets in $\scrU_i$.  
But otherwise everything is exactly the same.

\vskip 20pt

\subsection*{Construction of $\scrV_i$}
This is the one construction that is a lot different than that in Section 5.  
It is not so much that the construction is different, but we need to ensure that we have a lot of control over certain aspects here for future considerations.

Since we have already constructed $\scrV_1 := \scrV$ in Section 5, we assume that $\scrV_{i-1}$ has already been constructed and use it to construct $\scrV_i$.  
Fix a geodesic $\g \in \G_i$.  
If $\g \nin \G_{i-1}$, then we use the exact same procedure as in Section 5 to produce the associated members of $\scrV_i$ (at least, along any geodesic segment of $\g$ which does not intersect any members of $\G_{i-1}$).

So assume that $\g \in \G_i$ and $\g \in \G_{i-1}$.  
Recall that the image of $\g$ may potentially be subdivided into several geodesic segments contained in $\G_i'$.  
Let $\g_i \in \G_i'$ denote one such geodesic segment of $\g$, and then the construction below can be carried out on each such geodesic segment.
Note that since $\g \in \G_{i-1}$, there exists a geodesic segment $\g_{i-1}$ of $\g$ contained in $\G_{i-1}'$ and containing $\g_i$.
Let 
	\[
	\overline{\g}_{i-1} = \text{im}(\g_{i-1}) \setminus \bigcup_{U \in \scrU_{i-1}} U
	\]
denote the portion of the geodesic segment $\g_{i-1}$ not contained in any open sets in $\scrU_{i-1}$ (so we just truncate off the ends of im($\g_{i-1}$)).  
Recall that, in the construction of $\scrV_{i-1}$ in Section 5, we subdivided $\overline{\g}_{i-1}$ into equidistant subintervals.  
Let us say that there are $N_{i-1}$ of these subintervals, each of length $\xi_{i-1}$.  
Let $x_0, x_1, \hdots, x_{N_{i-1}}$ denote the endpoints of these subintervals, and let $m_1, m_2, \hdots, m_{N_{i-1}}$ denote the midpoints of these subintervals (all as points in $\X$).  
Then the open sets that $\g_{i-1}$ contributed to $\scrV_{i-1}$ were the sets of the form
	\[
	b\left( [x_{k-1},x_k ], \frac{1}{2} \xi_{i-1} \right)
	\]
where $[x_{k-1},x_k ]$ denotes the geodesic segment of $\g_{i-1}$ from $x_{k-1}$ to $x_k$.  

We now construct the open sets that $\g_i$ contributes to $\scrV_i$.  
What we want to do is to subdivide the geodesic segment
	\[
	\overline{\g}_i := \text{im}(\g_i) \setminus \bigcup_{U \in \scrU_i} U
	\]
into $N_i$ equidistant subintervals of length $\xi_i < < \xi_{i-1}$.  
Let $y_0, y_1, \hdots, y_{N_i}$ denote the endpoints of these subintervals, and let $n_1, n_2, \hdots, n_{N_i}$ denote the midpoints of these subintervals.
There would be no issue with any of this if not for the following.  
We need to ensure that the midpoints $m_1, m_2, \hdots, m_{N_{i-1}}$ of the subintervals of $\overline{\g}_{i-1}$ are still midpoints of the new subintervals corresponding to $\overline{\g}_i$.
What follows is just a detailed explanation showing that this is possible, but the reader who already believes this can consult Figure \ref{subdividing gamma}.

First note that the image of $\g_{i-1}$ may be bigger than that of $\g_i$.  
In what follows we are only going to consider the portion of $\overline{\g}_{i-1}$ which corresponds to $\overline{\g}_i$.  
Also, as mentioned above in the construction of $\scrU_i$, we choose $\a_i$ small enough so that no midpoints of $\overline{\g}_{i-1}$ were lost in the construction of $\overline{\g}_i$ (unless the center of a member of $\scrU_i$ happened to be a midpoint of $\overline{\g}_{i-1}$, but that specific setup will not cause any issues going forward).

Now we define
	\begin{equation}\label{picking K_i}
	\xi_i := \frac{\xi_{i-1}}{K_i}
	\end{equation}
where $K_i$ is a sufficiently large integer to be chosen shortly.  
Then, since $\xi_{i-1}$ is an integer multiple of $\xi_i$, we can subdivide the geodesic segment
	\[
	\left[ \g \left( \g^{-1}(m_1) - \frac{1}{2} \xi_i \right), \g \left( \g^{-1}(m_{N_{i-1}}) + \frac{1}{2} \xi_i \right) \right]
	\]
into equidistant subintervals of length $\xi_i$.  
What is important is to note that each $m_k$, for $1 \leq k \leq N_{i-1}$, will now be a midpoint of one of these subintervals.  
This is due to the fact that $\xi_{i-1}$ is an integer multiple of $\xi_i$, and that we started at $\g \left( \g^{-1}(m_1) - \frac{1}{2} \xi_i \right)$.

Let $a$ and $b$ denote the endpoints of $\g_i$, and   
let $U_i^a$, and $U_i^b$ denote the neighborhoods about $a$ and $b$ in $\scrU_i$.  
Then for $K_i$ chosen sufficiently large we will have that the interval
	\[
	\left[ \g \left( \g^{-1}(m_1) - \frac{1}{2} \xi_i \right), \g \left( \g^{-1}(m_{N_{i-1}}) + \frac{1}{2} \xi_i \right) \right]
	\]
does not cover all of $\overline{\g}_i$.  
In general, there will be ``leftover segments" at each end of $\overline{\g}_i$ which have not yet been partitioned into subintervals.  
But by choosing $K_i$ large, we can make the length of each of these segments as close to an integer multiple of $\xi_i$ as we like.  
So given $\mu_i > 0$ arbitrarily small, we can choose $K_i$ large enough so that the length of each of these end regions is within $\mu_i$ of being an integer multiple of $\xi_i$.  
Consider the points
	\[
	z_a := \text{im}(\g) \cap \partial U_i^a 		\qquad	z_b := \text{im}(\g) \cap \partial U_i^b.
	\]
There exist positive numbers $\mu_a, \mu_b < \mu_i$ such that the ``enlarged $\overline{U}_i$'s" defined by
	\begin{equation}\label{enlarging U}
	\overline{U}_i^a := U_i^a \cup b(z_a,\mu_a) 	\qquad 	\overline{U}_i^b := U_i^b \cup b(z_b,\mu_b)
	\end{equation}
will create end regions whose lengths {\it are} integer multiples of $\xi_i$.  
So we replace $U_i^a$ and $U_i^b$ with $\overline{U}_i^a$ and $\overline{U}_i^b$ in $\scrU_i$.
Note that, since $\D_i'$ is finite, we can choose $\mu_a$ and $\mu_b$ small enough so that the set $\scrU_i$, with $\overline{U}_i^a$ and $\overline{U}_i^b$ inserted in place of $U_i^a$ and $U_i^b$, still satisfies the necessary properties for $\O_i$ listed at the beginning of this Section.
Then, the neighborhoods that $\g$ contributes to $\scrV_i$ are the $\frac{1}{2} \xi_i$ neighborhoods of each of these subdivisions.

\begin{figure}
\begin{center}
\begin{tikzpicture}

\draw (0,0) -- (10,0);
\draw[fill=black!] (0,0) circle (0.3ex);
\draw (0,0)node[left]{$a=x_0$};
\draw[fill=black!] (10,0) circle (0.3ex);
\draw (10,0)node[right]{$b = x_{10}$};
\draw (1,0.1) -- (1,-0.1);
\draw (2,0.1) -- (2,-0.1);
\draw (3,0.1) -- (3,-0.1);
\draw (4,0.1) -- (4,-0.1);
\draw (5,0.1) -- (5,-0.1);
\draw (6,0.1) -- (6,-0.1);
\draw (7,0.1) -- (7,-0.1);
\draw (8,0.1) -- (8,-0.1);
\draw (9,0.1) -- (9,-0.1);
\draw (5,1)node{$\g$};
\draw (1,0)node[below]{$x_1$};
\draw (2,0)node[below]{$x_2$};
\draw (3,0)node[below]{$x_3$};
\draw (4,0)node[below]{$x_4$};
\draw (5,0)node[below]{$x_5$};
\draw (6,0)node[below]{$x_6$};
\draw (7,0)node[below]{$x_7$};
\draw (8,0)node[below]{$x_8$};
\draw (9,0)node[below]{$x_9$};
\draw[fill=black!] (0.5,0) circle (0.3ex);
\draw[fill=black!] (1.5,0) circle (0.3ex);
\draw[fill=black!] (2.5,0) circle (0.3ex);
\draw[fill=black!] (3.5,0) circle (0.3ex);
\draw[fill=black!] (4.5,0) circle (0.3ex);
\draw[fill=black!] (5.5,0) circle (0.3ex);
\draw[fill=black!] (6.5,0) circle (0.3ex);
\draw[fill=black!] (7.5,0) circle (0.3ex);
\draw[fill=black!] (8.5,0) circle (0.3ex);
\draw[fill=black!] (9.5,0) circle (0.3ex);
\draw (0.5,0)node[above]{$m_1$};
\draw (1.5,0)node[above]{$m_2$};
\draw (2.5,0)node[above]{$m_3$};
\draw (3.5,0)node[above]{$m_4$};
\draw (4.5,0)node[above]{$m_5$};
\draw (5.5,0)node[above]{$m_6$};
\draw (6.5,0)node[above]{$m_7$};
\draw (7.5,0)node[above]{$m_8$};
\draw (8.5,0)node[above]{$m_9$};
\draw (9.5,0)node[above]{$m_{10}$};

\draw[|-|] (4.5,-1.2) -- (5.5,-1.2);
\draw (5,-1.5)node{$\xi_{i-1}$};

\draw (0,-3) -- (10,-3);
\draw[fill=black!] (0,-3) circle (0.2ex);
\draw (-0.1,-3)node[left]{$a$};
\draw[fill=black!] (10,-3) circle (0.2ex);
\draw (10.1,-3)node[right]{$b$};
\draw (0.2,-3) arc (0:360:0.2cm);
\draw (10.2,-3) arc (0:360:0.2cm);
\draw (0,-3.2)node[below]{$U_i^a$};
\draw (10,-3.2)node[below]{$U_i^b$};
\draw[fill=black!] (0.5,-3) circle (0.2ex);
\draw[fill=black!] (1.5,-3) circle (0.2ex);
\draw[fill=black!] (2.5,-3) circle (0.2ex);
\draw[fill=black!] (3.5,-3) circle (0.2ex);
\draw[fill=black!] (4.5,-3) circle (0.2ex);
\draw[fill=black!] (5.5,-3) circle (0.2ex);
\draw[fill=black!] (6.5,-3) circle (0.2ex);
\draw[fill=black!] (7.5,-3) circle (0.2ex);
\draw[fill=black!] (8.5,-3) circle (0.2ex);
\draw[fill=black!] (9.5,-3) circle (0.2ex);
\draw[dotted] (0.5,0) -- (0.5,-3);
\draw[dotted] (1.5,0) -- (1.5,-3);
\draw[dotted] (2.5,0) -- (2.5,-3);
\draw[dotted] (3.5,0) -- (3.5,-3);
\draw[dotted] (4.5,0) -- (4.5,-3);
\draw[dotted] (5.5,0) -- (5.5,-3);
\draw[dotted] (6.5,0) -- (6.5,-3);
\draw[dotted] (7.5,0) -- (7.5,-3);
\draw[dotted] (8.5,0) -- (8.5,-3);
\draw[dotted] (9.5,0) -- (9.5,-3);
\draw (0.4,-2.9) -- (0.4,-3.1);
\draw (0.6,-2.9) -- (0.6,-3.1);
\draw (0.8,-2.9) -- (0.8,-3.1);
\draw (1,-2.9) -- (1,-3.1);
\draw (1.2,-2.9) -- (1.2,-3.1);
\draw (1.4,-2.9) -- (1.4,-3.1);
\draw (1.6,-2.9) -- (1.6,-3.1);
\draw (1.8,-2.9) -- (1.8,-3.1);
\draw (2,-2.9) -- (2,-3.1);
\draw (2.2,-2.9) -- (2.2,-3.1);
\draw (2.4,-2.9) -- (2.4,-3.1);
\draw (2.6,-2.9) -- (2.6,-3.1);
\draw (2.8,-2.9) -- (2.8,-3.1);
\draw (3,-2.9) -- (3,-3.1);
\draw (3.2,-2.9) -- (3.2,-3.1);
\draw (3.4,-2.9) -- (3.4,-3.1);
\draw (3.6,-2.9) -- (3.6,-3.1);
\draw (3.8,-2.9) -- (3.8,-3.1);
\draw (4,-2.9) -- (4,-3.1);
\draw (4.2,-2.9) -- (4.2,-3.1);
\draw (4.4,-2.9) -- (4.4,-3.1);
\draw (4.6,-2.9) -- (4.6,-3.1);
\draw (4.8,-2.9) -- (4.8,-3.1);
\draw (5,-2.9) -- (5,-3.1);
\draw (5.2,-2.9) -- (5.2,-3.1);
\draw (5.4,-2.9) -- (5.4,-3.1);
\draw (5.6,-2.9) -- (5.6,-3.1);
\draw (5.8,-2.9) -- (5.8,-3.1);
\draw (6,-2.9) -- (6,-3.1);
\draw (6.2,-2.9) -- (6.2,-3.1);
\draw (6.4,-2.9) -- (6.4,-3.1);
\draw (6.6,-2.9) -- (6.6,-3.1);
\draw (6.8,-2.9) -- (6.8,-3.1);
\draw (7,-2.9) -- (7,-3.1);
\draw (7.2,-2.9) -- (7.2,-3.1);
\draw (7.4,-2.9) -- (7.4,-3.1);
\draw (7.6,-2.9) -- (7.6,-3.1);
\draw (7.8,-2.9) -- (7.8,-3.1);
\draw (8,-2.9) -- (8,-3.1);
\draw (8.2,-2.9) -- (8.2,-3.1);
\draw (8.4,-2.9) -- (8.4,-3.1);
\draw (8.6,-2.9) -- (8.6,-3.1);
\draw (8.8,-2.9) -- (8.8,-3.1);
\draw (9,-2.9) -- (9,-3.1);
\draw (9.2,-2.9) -- (9.2,-3.1);
\draw (9.4,-2.9) -- (9.4,-3.1);
\draw (9.6,-2.9) -- (9.6,-3.1);

\draw[|-|] (5,-3.3) -- (5.2,-3.3);
\draw (5.1,-3.3)node[below]{$\xi_i$};
\draw (2.5,-3.3)node[below]{$\g \left( \g^{-1}(m_1) - \frac{1}{2} \xi_i \right)$};
\draw[<-] (0.42,-3.1) -- (1,-3.6);
\draw (7.7,-3.3)node[below]{$\g \left( \g^{-1}(m_10) + \frac{1}{2} \xi_i \right)$};
\draw[->] (9.2,-3.6) -- (9.58,-3.1);


\end{tikzpicture}
\end{center}
\caption{Constructing $\scrV_i$ so that midpoints remain midpoints (with $N_{i-1} = 10$).  In general, it will not be the case that $a=x_0$ or $b=x_{10}$.}
\label{subdividing gamma}
\end{figure}
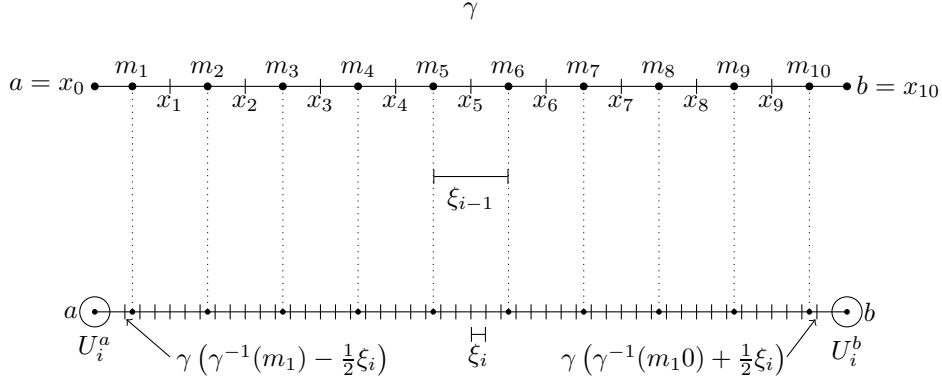

\begin{remark}\label{remark:  not intersecting geodesics in next stage}
Since $\D_{i+1}$ is obtained from $\D_i$ by adding a point, $\G_{i+1}$ is obtained from $\G_i$ by inserting ``new" geodesics from this point to the points of $\D_i$.
Then since the entire collection $\{ \G_i' \}_{i=1}^\infty$ is defined before any of the collections $\O_i$ are constructed, we require $\xi_i$ to be small enough so that none of the neighborhoods in $\scrV_i$ meet any of the ``new" geodesic segments from $\G_{i+1}$ (unless the image of that geodesic intersects a member of $\G_i$, of course).  
\end{remark}

\vskip 20pt

\subsection*{Construction of $\scrW_i$}
This is identical to the construction of $\scrW_1$ from Section 5.

\vskip 20pt

\subsection{Construction of $\N_i$, and endowing $\N_i$ with an initial metric $g_i'$}

Just as in Section 5, $\N_i$ denotes the nerve of $\O_i$.  
The same argument as in Section 5 also shows that order($\O_i) \leq n+3$, implying that dim($\N_i) \leq n+2$.  
What we do now is define a ``preliminary" metric $g_i'$ on $\N_i$.  
Later in this Section, while in the process of defining the maps $h_i$, we will (possibly) alter the metric $g_i'$ slightly to obtain a new metric $g_i$ so that the maps $\psi_i$, $\v_{i+1,i}$, and $h_i$ satisfy certain desirable geometric properties.

Let 
$( \omega_i )_{i = 1}^\infty$ be a monotone decreasing sequence of positive real numbers which converges to $0$. 
The sequence $( \omega_i )_{i = 1}^\infty$ will be required to converge to $0$ at a sufficiently fast rate, to be specified later when defining the maps $\v_{i+1,i}$.  

If any vertex of any edge $e \in \N_i$ corresponds to an open set in $\scrW_i$, define $g_i'(e) := M_i'$, where $M_i'$ is a large constant.  
Otherwise, both vertices of $e$ correspond to open sets in $\scrU_i \cup \scrV_i$.  
Let $A, B \in \scrU_i \cup \scrV_i$ denote the open sets corresponding to the vertices of $e$, respectively.
Since no two open sets of $\scrU_i$ intersect nontrivially, at least one of $A$ or $B$ is in $\scrV_i$.  
Let $\g \in \G_i'$ be the geodesic corresponding to this open set, and note that $\g$ is well-defined since if both $A, B \in \scrV_i$ and $A \cap B \neq \emptyset$ then $A$ and $B$ correspond to the same geodesic.

Recall that in the definition of $\scrV_i$ we subdivided the geodesic segment
	\[
	\overline{\g} := \text{im}(\g) \setminus \bigcup_{U \in \scrU_i} U
	\]
into equidistant subintervals of length $\xi_i$.  
Then we define
	\begin{equation}\label{definition of g'}
	g_i'(e) := (1-\omega_i) \xi_i.
	\end{equation}

The exact same argument as in Section 5 shows that, by choosing $M_i'$ large enough, we have that $g_i'$ is a Euclidean metric on $\N_i$.  
Also, by choosing $\e_i$ sufficiently small, we can ensure that the shortest path between two vertices corresponding to members of $\scrU_i$ is the edge-path corresponding to the geodesic in $\G_i$ between those points.

\vskip 20pt

\subsection{The maps $\psi_i$ and $\v_{i+1,i}$}

In this Subsection we define, for all $i \in \mathbb{N}$, maps $\psi_i : \X \to \N_i$ and $\v_{i+1,i} : \N_{i+1} \to \N_i$.  
Unfortunately, by necessity, our construction is is sort of roundabout.  
What we first do is define $\psi_i$ on the geodesic segments of $\G_i'$ for all $i$.
From there we define the map $\v_{i+1,i}$ again for all $i$, and then we use the collection $( \v_{i+1,i} )$ to finish defining $\psi_i$.  

\vskip 20pt

\subsection*{Defining $\psi_i : \text{im}(\G_i') \to \N_i$}
Let $\g \in \G_i'$, let $m_1, m_2, \hdots, m_{N_i}$ denote the midpoints of the geodesic segments of $\g$ corresponding to the construction of $\scrV_i$, and let $x$ and $y$ denote the endpoints of $\g$.  
By construction, each $m_j$ is contained in a unique open set $V_j \subset \O_i$. 
So we define $\psi_i (m_j) = v_j$, where $v_j$ is the vertex of $\N_i$ corresponding to $V_j$.   
Likewise, both $x$ and $y$ are contained in unique open sets $U_x, U_y \in \O_i$.  
So we define $\psi_i(x) = v_x$ and $\psi_i(y) = v_y$, where $v_x$ and $v_y$ are the vertices of $\N_i$ corresponding to $U_x$ and $U_y$, respectively.

We now need to extend $\psi_i$ over all of $\g$.  
For the geodesic segments $[m_1, m_2], $ $[m_2, m_3], \hdots, [m_{N_i - 1}, m_{N_i}]$ we just extend linearly.  
But on the geodesic segments $[x, m_1]$ and $[m_{N_i}, y]$ the map $\psi_i$ will {\it not} be linear.  
To define $\psi_i$ here, let $m_0$ and $m_{N_i + 1}$ be as in the construction of $\scrV_i$ (or see Figure \ref{subdividing gamma} and/or \ref{defining varphi}).  
We define $\psi_i ([x,m_0]) = v_x$ and $\psi_i ([m_{N_i + 1}, y]) = v_y$, and we define $\psi_i$ linearly on $[m_0,m_1]$ and $[m_{N_i}, m_{N_i + 1}]$.  
Lastly, note that $\psi_i$ is $(1 - \omega_i)$-Lipschitz over all of $\text{im}(\G_i)$.

\vskip 20pt

\subsection*{Defining $\v_{i+1,i}: \N_{i+1} \to \N_i$}
Recall that the mesh of $\O_{i+1}$ is strictly less than one third of the Lebesgue number of $\O_i$.  
Therefore $\O_{i+1}$ is a {\it star-refinement} of $\O_i$.  
This means that for all $U \in \O_{i+1}$, there exists $V \in \O_i$ such that $V$ contains both $U$ and every member of $\O_{i+1}$ which meets $U$.

Let us first define $\v_{i+1,i}: \N_{i+1} \to \N_i$ on the vertices of $\N_{i+1}$ which correspond to members of $\scrW_{i+1}$. 
Let $v \in \N_{i+1}$ be such a vertex, and let $W_v \in \scrW_{i+1}$ denote the open set corresponding to $v$.  
Then define $\v_{i+1,i}(v)$ to be the barycenter of the maximal simplex in $\N_i$ whose vertices all correspond to open sets in $\O_i$ which contain $W_v$, which clearly exists since $\O_{i+1}$ refines $\O_i$.  

Now suppose $v \in \N_{i+1}$ is a vertex corresponding to some set $U_v \in \scrU_{i+1}$.  
Let $x_v \in \D_{i+1}'$ denote the center of $U_v$.  
Either $x_v \in \D_i'$ or $x_v \nin \D_i'$.  
If $x_v \in \D_i'$, then $\v_{i+1,i}$ maps $v$ to the vertex in $\N_i$ corresponding to the member of $\scrU_i$ containing $x_v$.  
Otherwise $x_v \nin \D_i$ and we treat $v$ as if $U_v$ were in $\scrW_{i+1}$ and map $v$ to the barycenter of the simplex spanned by all vertices corresponding to members of $\O_i$ which contain $U_v$.  
Note that this definition is consistent, since if $x_v \in \D_i'$ then $U_v$ is contained in only one element of $\O_i$.  

Lastly, suppose that $v \in \N_{i+1}$ is a vertex corresponding to some set $V \in \scrV_{i+1}$, and let $\g \in \G_{i+1}$ be the geodesic corresponding to $V$.
If $\g$ does not correspond to a geodesic in $\G_i$ then just as above we treat $v$ as if $V$ were in $\scrW_{i+1}$ and map $v$ to the barycenter of the maximal simplex spanned by all of the vertices corresponding to members of $\O_i$ which contain $V$.

So suppose $\g$ {\it does} correspond to a geodesic in $\G_i$.  
Let $[x,y]$ denote the geodesic segment of $\g$ corresponding to $V$, and let $m$ denote the midpoint of this segment.
If $m$ is the midpoint of some subinterval corresponding to a set $V' \in \scrV_i$, then we define $\v_{i+1,i}(v)$ to be the vertex in $\N_i$ corresponding to $V'$. 
So suppose that $m$ is not the midpoint of any subinterval corresponding to a member of $\scrV_i$.  
Using the same notation as in Section 5, let $m_0, m_1, \hdots, m_{N_i + 1}$ denote the midpoints of the subintervals of $\g$ with respect to the cover $\scrV_i$ (please see figure \ref{defining varphi}).  
Let $a$ and $b$ be the endpoints of $\g$ so, in particular, $a, b \in \D_i'$.    
Let $m_j, m_{j+1}$ be the two midpoints of geodesic segments corresponding to elements of $\scrV_i$ nearest to $m$ (and where one of these midpoints {\it could} instead be either $a$ or $b$, please see Figure \ref{defining varphi}). 
Let $V_j, V_{j+1} \in \scrV_i$ be the open sets corresponding to $m_j$ and $m_{j+1}$, and let $v_j, v_{j+1} \in \N_i$ be the corresponding vertices in $\N_i$.  
Then the vertices $v_j$ and $v_{j+1}$ are adjacent in $\N_i$ via some edge $e$, and if neither $m_j = a$ nor $m_{j+1} = b$ then we have the following two equalities
	\begin{equation*}
	\dx(m_j,m) + \dx(m,m_{j+1}) = \xi_i 	\qquad	d_i' (v_j,v_{j+1}) = (1 - \omega_i) \xi_i
	\end{equation*}
where $d_i'(,)$ denotes the path metric on $\N_i$ induced by the metric $g_i'$.  
In this case, define $\v_{i+1,i}(v) = p \in e$ where $p$ is the unique point on $e$ which satisfies both
	\begin{equation}\label{defining varphi on V}
	d_i'(v_j,p) = (1-\omega_i) \dx(m_j,m) 	\qquad	d_i'(p,v_{j+1}) = (1-\omega_i) \dx(m,m_{j+1}).
	\end{equation}
	
Now, if either $m_j = a$ or $m_{j+1} = b$ then $\psi_i(m_j) = \psi_i(m_{j+1})$ and therefore $v_j = v_{j+1}$.  
So $d_i'(v_j,v_{j+1}) = 0$ and by necessity we define $\v_{i+1,i}(v) = v_j$.
For a picture, please see Figure \ref{defining varphi}

Now, to extend $\v_{i+1,i}$ to all of $\N_{i+1}$ just note that by the ``star-refinement" remark, adjacent vertices of $\N_{i+1}$ are mapped to adjacent simplices of $\N_i$.  
The metric $g_i'$ restricted to these two adjacent simplices is uniquely geodesic.  
So we extend $\v_{i+1,i}$ along the $1$-skeleton of $\N_{i+1}$ via these geodesics, and analogously we extend to all of $\N_{i+1}$.

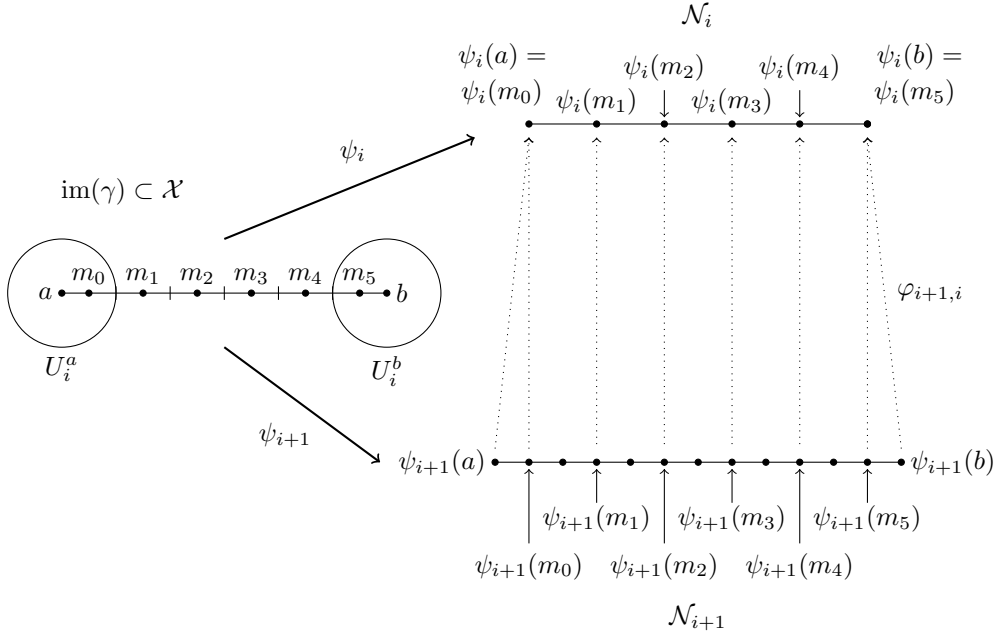
\begin{figure}
\begin{center}
\begin{tikzpicture}[scale=0.9]

\draw (-5.5,0)node{$\text{im}(\g) \subset \X$};
\draw (-6.4,-1.5) -- (-1.6,-1.5);
\draw[fill=black!] (-6.4,-1.5) circle (0.3ex);
\draw (-6.4,-1.5)node[left]{$a$};
\draw[fill=black!] (-1.6,-1.5) circle (0.3ex);
\draw (-1.6,-1.5)node[right]{$b$};
\draw (-5.6,-1.5) arc (0:360:0.8cm);
\draw (-0.8,-1.5) arc (0:360:0.8cm);
\draw (-6.4,-2.3)node[below]{$U_i^a$};
\draw (-1.6,-2.3)node[below]{$U_i^b$};

\draw (-5.6,-1.6) -- (-5.6,-1.4);
\draw (-4.8,-1.6) -- (-4.8,-1.4);
\draw (-4,-1.6) -- (-4,-1.4);
\draw (-3.2,-1.6) -- (-3.2,-1.4);
\draw (-2.4,-1.6) -- (-2.4,-1.4);

\draw[fill=black!] (-6,-1.5) circle (0.3ex);
\draw[fill=black!] (-5.2,-1.5) circle (0.3ex);
\draw[fill=black!] (-4.4,-1.5) circle (0.3ex);
\draw[fill=black!] (-3.6,-1.5) circle (0.3ex);
\draw[fill=black!] (-2.8,-1.5) circle (0.3ex);
\draw[fill=black!] (-2,-1.5) circle (0.3ex);
\draw (-6,-1.5)node[above]{$m_0$};
\draw (-5.2,-1.5)node[above]{$m_1$};
\draw (-4.4,-1.5)node[above]{$m_2$};
\draw (-3.6,-1.5)node[above]{$m_3$};
\draw (-2.8,-1.5)node[above]{$m_4$};
\draw (-2,-1.5)node[above]{$m_5$};

\draw[->][thick] (-4,-0.7) -- (-0.3,0.8);
\draw[->][thick] (-4,-2.3) -- (-1.7,-4);
\draw (-2.1,0.6)node{$\psi_i$};
\draw (-3.1,-3.6)node{$\psi_{i+1}$};

\draw (0.5,1) -- (5.5,1);
\draw[fill=black!] (0.5,1) circle (0.3ex);
\draw (0.9,2)node[left]{$\psi_i(a) = $};
\draw (0.1,1.8)node[below]{$\psi_i(m_0)$};
\draw[fill=black!] (5.5,1) circle (0.3ex);
\draw (5.5,2)node[right]{$\psi_i(b) =$};
\draw (6.2,1.8)node[below]{$\psi_i(m_5)$};

\draw (3,2.6)node{$\N_i$};

\draw[fill=black!] (1.5,1) circle (0.3ex);
\draw[fill=black!] (2.5,1) circle (0.3ex);
\draw[fill=black!] (3.5,1) circle (0.3ex);
\draw[fill=black!] (4.5,1) circle (0.3ex);
\draw[fill=black!] (5.5,1) circle (0.3ex);

\draw (1.5,1)node[above]{$\psi_i(m_1)$};
\draw (2.5,1.5)node[above]{$\psi_i(m_2)$};
\draw[->] (2.5,1.5) -- (2.5,1.1);
\draw (3.5,1)node[above]{$\psi_i(m_3)$};
\draw (4.5,1.5)node[above]{$\psi_i(m_4)$};
\draw[->] (4.5,1.5) -- (4.5,1.1);


\draw (3,-6.3)node{$\N_{i+1}$};
\draw (0,-4) -- (6,-4);
\draw[fill=black!] (0,-4) circle (0.3ex);
\draw (0,-4)node[left]{$\psi_{i+1}(a)$};
\draw[fill=black!] (6,-4) circle (0.3ex);
\draw (6,-4)node[right]{$\psi_{i+1}(b)$};

\draw[fill=black!] (0.5,-4) circle (0.3ex);
\draw[fill=black!] (1,-4) circle (0.3ex);
\draw[fill=black!] (1.5,-4) circle (0.3ex);
\draw[fill=black!] (2,-4) circle (0.3ex);
\draw[fill=black!] (2.5,-4) circle (0.3ex);
\draw[fill=black!] (3,-4) circle (0.3ex);
\draw[fill=black!] (3.5,-4) circle (0.3ex);
\draw[fill=black!] (4,-4) circle (0.3ex);
\draw[fill=black!] (4.5,-4) circle (0.3ex);
\draw[fill=black!] (5,-4) circle (0.3ex);
\draw[fill=black!] (5.5,-4) circle (0.3ex);

\draw[<-][dotted] (0.5,0.8) -- (0,-3.8);
\draw[<-][dotted] (1.5,0.8) -- (1.5,-3.8);
\draw[<-][dotted] (2.5,0.8) -- (2.5,-3.8);
\draw[<-][dotted] (3.5,0.8) -- (3.5,-3.8);
\draw[<-][dotted] (4.5,0.8) -- (4.5,-3.8);
\draw[<-][dotted] (5.5,0.8) -- (6,-3.8);
\draw[->][dotted] (0.5,-3.8) -- (0.5,0.8);
\draw[->][dotted] (5.5,-3.8) -- (5.5,0.8);

\draw (0.5,-5.2)node[below]{$\psi_{i+1}(m_0)$};
\draw[->] (0.5,-5.2) -- (0.5,-4.1);
\draw (1.5,-4.5)node[below]{$\psi_{i+1}(m_1)$};
\draw[->] (1.5,-4.6) -- (1.5,-4.1);
\draw (2.5,-5.2)node[below]{$\psi_{i+1}(m_2)$};
\draw[->] (2.5,-5.2) -- (2.5,-4.1);
\draw (3.5,-4.5)node[below]{$\psi_{i+1}(m_3)$};
\draw[->] (3.5,-4.6) -- (3.5,-4.1);
\draw (4.5,-5.2)node[below]{$\psi_{i+1}(m_4)$};
\draw[->] (4.5,-5.2) -- (4.5,-4.1);
\draw (5.5,-4.5)node[below]{$\psi_{i+1}(m_5)$};
\draw[->] (5.5,-4.6) -- (5.5,-4.1);

\draw (5.8,-1.5)node[right]{$\v_{i+1,i}$};

\end{tikzpicture}
\end{center}
\caption{The construction of $\v_{i+1,i}:\N_{i+1} \to \N_i$, where $N_i = 4$.
Note that the Figure is not to scale.  
Distances in $\N_{i+1}$ are larger than their counterparts in $\N_i$ since $\omega_{i+1} < \omega_i$.}
\label{defining varphi}
\end{figure}

\vskip 20pt

\subsection*{Defining $\psi_i$ over all of $\X$}
Note that, for $i < j$, we have a map $\v_{j,i}: \N_j \to \N_i$ defined by composition. 
In \cite{Isbell} Isbell proves that, if each map $\v_{i+1,i}$ mapped {\it every} vertex of $\N_{i+1}$ to the barycenter of the simplex spanned by all members of $\O_i$ which contained the corresponding open set, then $\X$ would be homeomorphic to the inverse limit of the system $(\N_i, \v_{j, i})$.  
Let us call such a map a {\it barycentric map}, and such a system a {\it barycentric system}.  
One remark is that, for Isbell's result, it is necessary that $\O_{i+1}$ be a star-refinement of $\O_i$ for each $i$ (as is the case here).  

The map $\v_{i+1,i}$ as we have defined it is very nearly a barycentric map.  
It is barycentric on all vertices of $\N_{i+1}$ corresponding to members of $\scrU_{i+1} \cup \scrW_{i+1}$.  
If $v \in \N_{i+1}$ is a vertex corresponding to a member $V \in \scrV_{i+1}$ corresponding to a geodesic $\g \in \G_{i+1}$, then $\v_{i+i,i}$ is barycentric with respect to $v$ if either $\g \nin \G_i$ or if $v$ corresponds to a midpoint of $\g$ at stage $i$.  
So the only case in which $\v_{i+1,i}$ is {\it not} barycentric with respect to $v$ is when the corresponding geodesic $\g$ is in $\G_i$ and $v$ does not correspond to a midpoint for $\g$ in the $i^{th}$ stage.  
But for such vertices $v$, the barycentric map would just send $v$ to the barycenter of the corresponding edge in $\N_i$, whereas our definition via equation \eqref{defining varphi on V} just shifts the image along this edge by a distance of at most $\frac{1}{2} (1 - \omega_i) \xi_i$.  

We need to analyze Isbell's argument in \cite{Isbell} to ensure that it still applies to our setting.  
So a quick outline is as follows.
For any $x \in \X$, let $\s^i(x) = \s^i$ denote the closed simplex in $\N_i$ corresponding to the set of all elements in $\O_i$ which contain $x$.  
Let $f_{j,i}: \N_j \to \N_i$ be the composition of the barycentric maps.  
We have that $f_{i+1,i}(\s^{i+1}) \subseteq \s^i$ since $\O_{i+1}$ is a star-refinement of $\O_i$.  
So $(\s^i, f_{j,i})$ is an inverse system of compact spaces, and therefore its limit is a nonempty subset of the limit space of the larger barycentric system $(\N_i, f_{j,i})$.  
Isbell endows each $\N_i$ with the metric where the distance between two points is defined as the maximum difference in their corresponding barycentric coordinates.
If $v \in \N_{i+1}$ is a vertex and $u \in \N_i$ is a vertex corresponding to an open set which contains the star of the open set corresponding to $v$, then $f_{n+1,n}$ sends the closed star of $v$ into the set of all points with $u^{th}$ barycentric coordinate at least 1/(N+1) (where $N=n+2$ denotes the dimension of $\N_i$ for all i).
Then each map $f_{i+1,i}$ is (N/(N+1))-Lipschitz.  
Therefore, the inverse limit of $(\s^i, f_{j,i})$ is a single point in the limit space of $(\N_i, f_{j,i})$, which we identify with $x$.  

First, note that the metric that we are putting on $\N_i$ in this paper is very different than what Isbell uses.  
But the conclusion of Isbell's result is purely topological, and so this does not cause any issue since both metrics induce the same topology as that inherited from the simplicial complex structure of $\N_i$.
We will show that for $\e_{i+1}$ sufficiently small the map $\v_{i+1,i}$ is still $(N/(N+1))$-Lipschitz with respect to Isbell's metric.
Then $\X$ will be naturally identified with the inverse limit of the system $(\N_i, \v_{j,i})$, and we will show that the projection maps $\psi_i$ agree with our definition above on $\G_i$.

Let $v \in \N_{i+1}$ be a vertex on which $\v_{i+1,i}$ is not barycentric.  
Let $\g \in \G_{i+1}$ be the corresponding geodesic, let $m_v \in \X$ be the midpoint corresponding to $v$ corresponding to stage $(i+1)$, and let $V \in \scrV_{i+1}$ be the corresponding open set.
Since $\v_{i+1,i}$ is not barycentric on $v$, we know that $\g \in \G_i$ and that $m_v$ is not a midpoint of $\g$ corresponding to stage $i$.  
Let $m_j, m_{j+1}$ denote the midpoints nearest $m_v$ in stage $i$ with corresponding vertices $v_j, v_{j+1} \in \N_i$.  
Lastly,  let $V_j, V_{j+1} \in \scrV_i$ denote the open sets corresponding to $v_j$ and $v_{j+1}$, respectively.

The two key observations are the following.  
The first is that since $m_v$ is not a midpoint at stage $i$, we may choose $\e_{i+1}$ small enough so that {\it both} $V_j$ and $V_{j+1}$ contain the star of $V$. 
The second is that, by the construction of $\O_{i+1}$, $V_j$ and $V_{j+1}$ are the only members of $\O_i$ which have nontrivial intersection with $V$.  
So if $W \in \scrW_{i+1}$ is in the star of $V$, then the elements of $\O_i$ which contain $W$ are precisely $V_j$ and $V_{j+1}$.  
Therefore, if $w$ is the vertex in $\N_{i+1}$ corresponding to $W$, then 
	\begin{equation*}
	\v_{i+1,i}(w) = \frac{1}{2} v_j + \frac{1}{2} v_{j+1}.
	\end{equation*}
So it is clear that the maximal difference in the barycentric coordinates of $\v_{i+1,i}(v)$ and $\v_{i+1,i}(w)$ is less than or equal to 1/2.
The vertex $v$ is also adjacent to two other vertices corresponding to members of $\scrV_{i+1}$.  
But it is clear that $\v_{i+1,i}$ reduces the difference in the barycentric coordinates of these vertices by more than N/(N+1) as well (in $\N_{i+1}$ their difference is 1, whereas in $\N_i$ it is less than $\frac{1}{2} (1-\omega_i)\xi_i << N/(N+1)$).

Therefore, the same argument as in Isbell's paper holds, and we have that $\X$ is the inverse limit of the system $(\N_i, \v_{j,i})$.  
The map $\psi_i: \X \to \N_i$ is then the natural projection map.  
If we fix $\g \in \G$, then this map $\psi_i$ will agree with our original definition above on all points in the image of $\g$ which are eventually midpoints of some subinterval in the construction of some $\scrV_j$.  
This forms a dense set in the image of $\g$, and so by continuity $\psi_i$ will agree with our original definition over all of $\g$.

\vskip 20pt

\subsection{The geometry of $\v_{i+1,i}$}
In this Subsection we will show that $\v_{i+1,i}$ is 1-Lipschitz over ``most" of $\N_{i+1}$ for $M_{i+1}'$ chosen large enough.  
As is discussed in Section 3, the piecewise flat Euclidean metric on $\N_{i+1}$ is completely determined by the lengths that $g_{i+1}'$ associates to each edge of $\N_{i+1}$.  
So for each edge $e$ of $\N_{i+1}$ we need to consider the length of the piecewise linear segments of $\v_{i+1,i}(e)$.  

Let $e \in \N_{i+1}$ be an edge.  
If one or both of the vertices of $e$ correspond to members of $\scrW_{i+1}$, then $\ell(e) = M_{i+1}'$.  
If $M_{i+1}'$ is chosen large enough then we can ensure that $\ell(\v_{i+1,i}(e)) < \ell(e)$ for all such edges $e$ (since $\N_{i+1}$ is finite).
Moreover, we can choose $M_{i+1}'$ large enough so that $\v_{i+1,i}$ is 1-Lipschitz over all equilateral simplices of $\N_{i+1}$ with edge lengths $M_{i+1}'$.  

So now suppose that neither vertex of $e$ corresponds to an open set in $\scrW_{i+1}$.  
Let $\g \in \G_{i+1}$ be the (unique) associated geodesic, and let $\xi_{i+1}$ denote the length of the subintervals associated to $\g$ in stage $i+1$.  
There are two cases:

{\it Case 1: }  $\g \in \G_i$.  
If $e$ corresponds to a geodesic segment on one of the ``ends" of $\g$, then $\v_{i+1,i}$ maps all of $e$ to a vertex in $\N_i$ corresponding to an element of $\scrU_i$ (see Figure \ref{defining varphi}).  
So, in particular, $\v_{i+1,i}$ is $1$-Lipschitz when restricted to this edge (since the image has length $0$).  
So now suppose $e$ corresponds to an ``interior" segment of $\g$.  
Then by equation \ref{defining varphi on V} we have that
	\begin{equation*}
	\ell(\v_{i+1,i}(e)) = \frac{1 - \omega_i}{1 - \omega_{i+1}} \ell(e).
	\end{equation*}
Since $\omega_{i+1} < \omega_i$, we see that, again, $\v_{i+1,i}$ is $1$-Lipschitz when restricted to this edge $e$.  
So again it is easy to see that we may choose $M_{i+1}'$ large enough so that $\v_{i+1,i}$ is 1-Lipschitz on all simplices containing such an edge $e$.

\vskip 10pt

{\it Case 2: }  $\g \nin \G_i$.
This is the one and only case when $\v_{i+1,i}$ may not be $1$-Lipschitz, but due to linearity may instead be $1$-expanding.  
To examine this case, let $a$ and $b$ denote the vertices of $e$.  
Also, let $U_a, U_b \in \O_{i+1}$ denote the open sets corresponding to $a$ and $b$.  
By Remark \ref{remark:  not intersecting geodesics in next stage}, if $\g \nin \G_i$ then the only members of $\O_i$ which contain $U_a$ and $U_b$ come from $\scrW_i$.  
Therefore, $a$ and $b$ are mapped by $\v_{i+1,i}$ to the barycenter of equilateral simplices in $\N_i$, each of whose edges have length $M_i'$.  
If $U_a$ and $U_b$ are contained in the exact same open sets of $\O_i$, then $\v_{i+1,i}(a) = \v_{i+1,i}(b)$ and $\v_{i+1,i}$ is $1$-Lipschitz on $e$ (since the length of the image is $0$).  
Otherwise, $\v_{i+1,i}(a)$ and $\v_{i+1,i}(b)$ are the barycenters of adjacent simplices in $\N_i$.  
In this case, it is clear that for a sufficiently large choice of $M_i'$ that
	\begin{equation}\label{1-expanding edges}
	\ell(\v_{i+1,i}(e)) \geq 2 d_i'(b(\s),\partial (\s)) > (1 - \omega_{i+1}) \xi_{i+1} = \ell(e)
	\end{equation}
where $\s$ denotes an equilateral $(n+2)$-dimensional simplex whose edges all have length $M_i'$, $b(\s)$ denotes the barycenter of $\s$, and $\partial (\s)$ denotes the boundary of $\s$.  

So in conclusion, $\v_{i+1,i}$ is 1-Lipschitz except on portions of the open star of the image under $\psi_{i+1}$ of geodesic segments in $\G_{i+1} \setminus \G_i$.


\vskip 20pt

\subsection{Altering the metric $g_i'$ to obtain the metric $g_i$, and the construction of the map $h_i$}
For each $i \in \mathbb{N}$ we have now defined maps $\psi_i: \X \to \N_i$ and $\v_{i+1,i}: \N_{i+1} \to \N_i$.  
Also, for $j > i$, we can also define the map $\v_{j,i}: \N_j \to \N_i$ as the composition $\v_{i+1,i} \circ \v_{i+2,i+1} \circ \hdots \circ \v_{j,j-1}$.  
Note that, since $\v_{i+1,i}$ was defined on vertices and then extended linearly, each map $\v_{j,i}$ is piecewise linear (pl).

As a preliminary step, we first (sequentially) enlarge the constants $M_i'$ to obtain new constants $M_i''$ so that inequality \eqref{1-expanding edges} is satisfied for all edges $e$ for which that setup applies.
Let us call this new metric $g_i''$.    
The order in which we define things is as follows.  
Once we have defined $g_1$ and $h_1$, we use Lemma \ref{technical lemma} to approximate $h_1 \circ \v_{2,1}$ with a map denoted $H_2$.  
The use of Lemma \ref{technical lemma} possibly requires us to enlarge the constants $M_2''$, which will lead to the metric $g_2$.  
Then from here we use Theorem \ref{Minemyer} to approximate $H_2$ with our desired map $h_2$.  
We then iterate this construction.  
So we use Lemma \ref{technical lemma} to construct $H_3$ which approximates $h_2 \circ \v_{3,2}$, and so on.


The map $h_1: \N_1 \to \E^{3n+6} \subset \R^{3n+6,1}$ is simply any pl isometric embedding, whose existence is guaranteed by Theorem \ref{Minemyer} in Section 3 (or see \cite{Minemyer1}).  
Let $( \rho_i)_{i=1}^\infty$ be a monotone decreasing sequence of positive real numbers which converges to zero.  
The map $h_i$ will be a $\rho_i$ approximation of $h_{i-1}$ for each $i$, and in Section 7 we will put restrictions on how quickly we demand the sequence $(\rho_i)$ to converge to $0$.


Let $H_2' := h_1 \circ \v_{2,1} : \N_2 \to \R^{3n+6,1}$. 
Since $H_2'$ is the composition of pl maps, it is pl.  
So let $\T_2$ be a triangulation of $\N_2$ on which $H_2'$ is simplicial.  
Define $\mathcal{S}_2 \subset \T_2$ by
	\begin{equation*}
	\mathcal{S}_2 = \{ \Delta \in \T_2 \bigr| \, \exists \, \g \in \G_2 \setminus \G_1 \text{ such that } \Delta \cap \text{im}(\psi_2 \circ \g) \neq \emptyset \}.
	\end{equation*}
So $\mathcal{S}_2$ is just the set of all simplices in $\T_2$ with an edge corresponding to a geodesic which is in $\G_2$ but not in $\G_1$.  
From Subsection 6.4 we know that $\v_{2,1}$, and thus $H_2'$, is 1-Lipschitz outside on $\T_2 \setminus \S_2$, and on $\S_2$ the map $H_2'$ is expanding along (most of) the edges which correspond to members of $\G_2'$.  

So we apply Lemma \ref{technical lemma} (and Remark \ref{technical lemma remark}) sequentially to each edge in $\S_2$ corresponding to a geodesic in $\G_2'$ on which $H_2'$ is expanding.
In applying Lemma \ref{technical lemma} to any such edge we may need to increase the constant $M_2''$ as required by the Lemma.
We then define $M_2$ to be the maximum required constant over all of the edges, and this defines the Euclidean metric $g_2$.  
The resulting map, call it $H_2$, will now be 1-Lipschitz over all of $\N_2$ and can be constructed so as to be a $\frac{\rho_2}{2}$-approximation of $H_2'$.  
Let $\T_2'$ be a triangulation of $\N_2$ on which $H_2$ is simplicial.

Let $\pi^+: \R^{3n+6,1} \to \R^{3n+6,1}$ be the projection onto the first $(3n+6)$ ``positive" coordinates, and similarly define $\pi^-: \R^{3n+6,1} \to \R^{3n+6,1}$ to be the projection onto the one ``negative" coordinate.  
Let $H_2^+ = \pi^+ \circ H_2$ and $H_2^- = \pi^- \circ H_2$.  
Let $G_2^+$, $G_2^-$, and $G_2$ denote the quadratic forms induced by $H_2^+$, $H_2^-$, and $H_2$, respectively.  
Also, by an abuse of notation, we will simply use $g_2$ to denote the quadratic form induced by $g_2$.  
It is a straightforward calculation (see Section 3 or \cite{Minemyer4}) that the induced quadratic form of $H_2$ splits as
	\begin{equation*}
	G_2 = G_2^+ + G_2^-
	\end{equation*}  
where $G_2^+$ is positive semi-definite and $G_2^-$ is negative semi-definite.  
The fact that $H_2$ is 1-Lipschitz means that
	\begin{equation*}
	g_2 \geq G_2 = G_2^+ + G_2^-
	\end{equation*}
and so
	\begin{equation*}
	G_2^+ \leq g_2 - G_2^- .
	\end{equation*}
Now we apply Theorem \ref{Minemyer} to the map $H_2^+$ to obtain a pl embedding $h_2^+: \N_2 \to \mathbb{E}^{3n+6}$ whose induced quadratic form, denoted $Q_2^+$, will satisfy 
	\begin{equation*}
	Q_2^+ = g_2 - G_2^- \qquad \Longrightarrow \qquad Q_2^+ + G_2^- = g_2
	\end{equation*}
over all simplices of some subdivision of $\T_2'$ on which $h_2^+$ is simplicial.

Thus, the pl map $h_2 : \N_2 \to \R^{3n+6,1}$ defined as the concatenation of $h_2^+$ and $H_2^-$ will be an isometric embedding.  
When applying Theorem \ref{Minemyer} we require that $h_2^+$ is a $\frac{\rho_2}{2}$-approximation of $H_2^+$, so that $h_2$ is a $\rho_2$-approximation of $h_1 \circ \v_{2,1}$.

One quick note on the construction of $g_3$ (and the construction of the subsequent metrics $g_i$).  
When defining $g_2$ we enlarged the constant $M_2''$ to obtain $M_2$.  
In doing this, we may have caused $\v_{3,2}$ to no longer satisfy the geometric properties from the previous Subsection.  
But we can fix this simply by scaling $M_3''$, and then the above procedure goes through directly.  

\begin{remark}\label{negative direction does not change}
Let $f_i = h_i \circ \psi_i$ and let $f_i^- = \pi^- \circ h_i \circ \psi_i$.  
Let $N_{i+1} = st(\psi_{i+1}(\G_{i+1} \setminus \G_i))$ denote the open star of the image under $\psi_{i+1}$ of the geodesic segments that are in $\G_{i+1}$ but are not in $\G_i$.
It is important to note that the construction of $h_{i+1}$ from $h_i$ only changes the negative coordinate of points in $N_{i+1}$.  
That is, if $p \nin N_{i+1}$ then $h_{i+1}^- (p) = h_i^- (p)$.  
In particular, if $x \in \text{im}(\g)$ for some $\g \in \G_i$, then $f_i^-(x) = f_{i+1}^-(x)$.  
\end{remark}

\vskip 20pt

\section{Step 4:  Finishing the proof of the Main Theorem}
Via the constructions in Section 6, we have that $\X$ is the inverse limit of the system $(\N_i, \v_{j,i})$ with ``projection" maps $\psi_i: \X \to \N_i$ (the quotes are because these maps are definitely not $1$-Lipschitz).  
We have constructed isometric embeddings $h_i : \N_i \to \R^{3n+6,1}$ in such a way that $h_i$ is a $\rho_i$-approximation of $h_{i-1} \circ \v_{i,i-1}$ for each $i$.  
So we then define $f_i: \X \to \R^{3n+6,1}$ by $f_i := h_i \circ \psi_i$, and let $\ds{f = \lim_{i \to \infty} f_i}$.  

Notice that for all $x \in \X$ and for all $i$:
	\begin{align*}
	|f_{i+1}(x) - f_i(x)| &= | (h_{i+1} \circ \psi_{i+1})(x) - (h_i \circ \psi_i)(x)|  \\
	&\leq |(h_i \circ \v_{i+1,i} \circ \psi_{i+1})(x) -  (h_i \circ \psi_i)(x)|  + \rho_i  \\
	&= |(h_i \circ \psi_i)(x) - (h_i \circ \psi_i)(x)| + \rho_i  \\
	&= \rho_i
	\end{align*}
where $| \cdot |$ denotes the Euclidean norm on $\E^{3n+7}$.
Thus, for $\rho_i$ chosen sufficiently small, the maps $(f_i)$ converge uniformly to $f$.  
Hence $f$ is continuous.

In the next three Subsections we show that $f$ is injective, that $f$ preserves the energy of paths in $\G$, and that this construction can be done so that $\pi^+ \circ f$ is not locally Lipschitz (all for $(\rho_i)$ chosen sufficiently small).  
In Subsection 7.4 we will then show how to extend this proof to spaces which are proper instead of compact.
One final remark is that, throughout this Section, $| \cdot |$ will always denote the Euclidean norm on $\E^{3n+7}$ and $\langle , \rangle$ will denote the Lorentzian quadratic form on $\R^{3n+6,1}$.

\vskip 20pt

\subsection{Verifying that $f$ is injective}

This is a pretty standard trick due to Nash in \cite{Nash1}.  
Let 
	\begin{equation*}
	\Delta_k = \left\{ (x,y) \in \X \times \X \bigr| d_{\X}(x,y) \geq 2^{-k} \right\} .
	\end{equation*}
Being a closed subset of the compact space $\X \times \X$, the set $\Delta_k$ is compact.  

Recall that $\a_i$ equals the mesh of the covering $\O_i$.  
For each $k \in \mathbb{N}$, let $k'$ be the smallest positive integer so that $\a_{k'} < 2^{-k}$.  
Of course, this property will also be satisfied for all $\ell \geq k'$.  
If a point pair $(x,y) \in \Delta_k$ then the points $x$ and $y$ are necessarily contained in different members of $\O_{k'}$.  
Therefore, $\psi_{k'} (x) \neq \psi_{k'} (y)$.  

For each $\ell \geq k'$ define a function $\zeta_\ell : \Delta_k \to \R$ defined by
	\begin{equation*}
	\zeta_\ell (x,y) = |f_\ell(x) - f_\ell(y)|.
	\end{equation*}
Since $\psi_\ell$ separates points in $\Delta_k$ and $h_\ell$ is an embedding, $\zeta_\ell > 0$ over all of $\Delta_k$.  
Then since $\Delta_k$ is compact, there exists some $\mu_\ell > 0$ so that $\zeta_\ell (x,y) \geq \mu_\ell$ for all $(x,y) \in \Delta_k$.  
Therefore, at each stage $\ell \geq k'$, if we choose $\rho_\ell < \frac{\mu_\ell}{2^\ell}$ then no pair of points in $\Delta_k$ can come together in the limit $\lim_{i \to \infty} f_i$.  
Observe that at any stage $i$, there are only finitely many $k$ so that $k' < i$.  
So this only ever results in a finite set of choices for $\rho_i$.  
Eventually any two distinct points are contained in $\Delta_k$ for some $k$, completing the proof that $f$ is injective for $(\rho_i)$ sufficiently small.

\vskip 20pt 

\subsection{Verifying that $f$ preserves the energy of any path contained in $\G$}
We begin with the following Lemma.

\begin{lemma}
Let $\g \in \G$.  
Then
	\begin{equation*}
	E(\g) = \lim_{k \to \infty} f_k^* E(\g).
	\end{equation*}
\end{lemma}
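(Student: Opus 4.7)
The plan is to use that $\g\in\G_i$ for some $i$ and, for $k\geq i$, reduce $f_k^*E(\g)=E(h_k\circ\psi_k\circ\g)$ to an explicit computation on $(\N_k,g_k)$ that tends to $E(\g)$.

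First I would observe that $h_k$, being a piecewise linear isometric embedding of $(\N_k,g_k)$ into $\R^{3n+6,1}$, has the property that on each simplex its linear part carries $g_k$ to the restriction of the Lorentzian form. Thus for any piecewise linear path $\b$ in $\N_k$ the Lorentzian energy $E(h_k\circ\b)$ equals the intrinsic energy $E_{g_k}(\b)$, since the squared velocity of $h_k\circ\b$ on each linear piece under $\la\,,\,\ra$ agrees with that of $\b$ under $g_k$, and energy is the integral of squared velocity. Since $\g\in\G_k$, the construction in Section 6 sends $\psi_k\circ\g$ piecewise linearly into the $1$-skeleton of $\N_k$, so $f_k^*E(\g)=E_{g_k}(\psi_k\circ\g)$.

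Next I would compute $E_{g_k}(\psi_k\circ\g)$ explicitly. Decomposing $\g$ into its constituent geodesic segments in $\G_k'$ (and using additivity of $E$ over such a decomposition), I would recall that on each such segment the portion lying outside $\bigcup_{U\in\scrU_k}U$ is cut into subintervals of arc length $\xi_k$, each mapped linearly by $\psi_k$ onto an edge of $g_k$-length $(1-\omega_k)\xi_k$, while the portions inside sets of $\scrU_k$ are collapsed to vertices. Therefore the velocity of $\psi_k\circ\g$ equals $(1-\omega_k)$ on the interior and $0$ on the collapsed pieces, giving
\[
f_k^*E(\g) \;=\; (1-\omega_k)^2\,\ell_k^\g,
\]
where $\ell_k^\g$ denotes the total length of $\g$ lying outside $\bigcup_{U\in\scrU_k}U$.

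To pass to the limit, observe that $\omega_k\to 0$ by choice of the sequence $(\omega_i)$, so it remains to show $\ell_k^\g\to\ell(\g)=E(\g)$ (the last equality holds because $\g$ is parameterized by arc length). The collapsed length is bounded by the number of points of $\D_k'$ through which $\g$ passes, times $\a_k$, and since each $\a_k$ was introduced with freedom to shrink after $\D_k'$ has been fixed, I may at stage $k$ impose additionally that this product be less than $2^{-k}$, consistently with the earlier requirement $\a_k<\d_{k-1}/3$. The main obstacle is just this parameter bookkeeping: each of $\a_k,\b_k,\e_k,\omega_k,\rho_k$ has accumulated several smallness requirements, but all are of the form ``small enough,'' and a diagonal choice at stage $k$ delivers simultaneously $\omega_k\to 0$ and $\ell_k^\g\to\ell(\g)$, whence $f_k^*E(\g)\to E(\g)$.
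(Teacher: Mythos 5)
Your proposal is correct and follows essentially the same route as the paper: compute $E_{g_k}(\psi_k\circ\g)$ from the velocity being $(1-\omega_k)v_\g$ off the collapsed $\scrU_k$-portions and $0$ on them, let $\omega_k\to 0$ and the collapsed length $\to 0$ by choosing $\a_k$ after $\D_k'$ is fixed, and invoke that the pl isometric embedding $h_k$ preserves energy. (Your factor $(1-\omega_k)^2$ is the careful version of the paper's $(1-\omega_k)$; both give the same limit.)
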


\begin{proof}
Let $\g \in \G$ and let $i$ be the smallest positive integer so that $\g \in \G_i$.  
Then certainly the endpoints of $\g$ are contained in $\D_i$, but there may be other points in the image of $\g$ that are also in $\D_i'$.
Let $u_1, u_2, \hdots, u_{k-1}$ denote the members of $\D_i'$ that lie in the image of $\g$ excluding the endpoints of $\g$ (and where it is very possible that $k - 1 = 0$).  
Then it follows directly from the construction of the map $\psi_i$ and the metric $g_i$ that
	\begin{equation*}
	\ell(\psi_i \circ \g) = (1 - \omega_i) \ell(\g) - 2k \a_i + \e
	\end{equation*}
where the $\e$ term only exists due to Subsection 6.1 when we altered the sets of $\scrU_i$ in order to force midpoints of subintervals of $\g$ at any stage to remain midpoints of subintervals at future stages.
But in this construction we could make $\e$ as small as we like by choosing $K_i$ arbitrarily large (see equations \eqref{picking K_i} and \eqref{enlarging U}, and the discussion therein).  
So in what follows we will ignore this term and write
	\begin{equation*}
	\ell(\psi_i \circ \g) = (1 - \omega_i) \ell(\g) - 2k \a_i.
	\end{equation*}
	
Now, as we increment to stage $i+1$, we add exactly one new point to $\D_{i+1}$.  
In general this could add many new points to $\D_{i+1}'$, but at most $2|\D_i|$ of these new points can lie on the image of $\g$.  
So we have that
	\begin{equation*}
	(1 - \omega_{i+1}) \ell(\g) - 2(k+2|\D_i|) \a_{i+1} \leq \ell (\psi_{i+1} \circ \g) \leq (1 - \omega_{i+1}) \ell(\g) - 2k \a_{i+1}
	\end{equation*} 
and for stage $i + m$ for any $m$ we have that
	\begin{equation*}
	(1 - \omega_{i+m}) \ell(\g) - 2(k+2m(|\D_i|+m)) \a_{i+m} \leq \ell (\psi_{i+m} \circ \g) \leq (1 - \omega_{i+m}) \ell(\g) - 2k \a_{i+m}.
	\end{equation*} 
Recall that we construct the entire collection $(\D_i)$ before we construct any of the $(\a_i)$.  
So for appropriately small choices of $(\a_i)$ we have that
	\begin{align}
	&\lim_{m \to \infty} \ell (\psi_{i+m} \circ \g) = \lim_{m \to \infty} (1-\omega_{i+m}) \ell(\g) = \ell (\g)  \nonumber \\
	\Longrightarrow 	\quad	&\lim_{k \to \infty} \ell(\psi_k \circ \g) = \ell (\g).  	\label{preserving length in the nerve}
	\end{align} 

By the construction of the metric $g_k$, the path $\psi_k \circ \g$ is a shortest path in $\N_k$.  
If $v_\g$ denotes the (constant) velocity of the path $\g$, then the velocity $v_k(t)$ of $\psi_k \circ \g$ at $t \in \text{domain}(\g)$ (and for $k \geq i$) is
	\begin{equation*}
	\ds{ v_k(t) = 
	\left\{ \begin{array}{cc} 
	(1-\omega_k)v_\g 	& \text{   if } 	d_{\X}(\g(t),x) > (\a_k - \xi_k) \text{ for all }x \in \D_k' \\ 
	0 				&  \text{else} 
	 \end{array} \right.  }	
	\end{equation*}
This is because all of the points in the image of $\g$ contained in an $(\a_k - \xi_k)$ ball about a point in $\D_k'$ 
 are mapped onto a vertex of $\N_k$ corresponding to a member of $\scrU_k$, while the rest of the image of $\g$ is linearly mapped onto the edges of $\N_k$ corresponding to $\g$.  

By Remark \ref{energy is a scalar multiple of length} and the above calculation we know that 
	\[
	E(\g) = v_\g \ell (\g)
	\]
and
	\[
	E(\psi_k \circ \g) = (1 - \omega_k) v_\g \left( \ell(\g) - \ell \left( \text{im}(\g) \cap \bigcup_{U \in \scrU_k} U \right) \right).
	\] 
But since both $\omega_k$ and $\ell \left( \text{im}(\g) \cap \bigcup_{U \in \scrU_k} U \right)$ approach $0$ as $k \to \infty$, we have that
	\begin{equation}\label{convergence of energies}
	\lim_{k \to \infty} E(\psi_k \circ \g) = v_\g \ell(\g) = E(\g).
	\end{equation}
	
Since $h_k$ preserves the quadratic forms on the simplices of $N_k$ (after a sufficiently fine subdivision), we have that
	\begin{align}
	&E(h_k \circ \psi_k \circ \g) = E(\psi_k \circ \g)  \nonumber \\
	\Longrightarrow  \quad &\lim_{k \to \infty} E(h_k \circ \psi_k \circ \g) = E(\g)  \nonumber \\
	\Longrightarrow  \quad &\lim_{k \to \infty} f_k^*E(\g) = E(\g).    	\label{limit of f preserving energy}
	\end{align}
\end{proof}

Of course, equation \eqref{limit of f preserving energy} is a necessary but not sufficient condition for $f$ to preserve the energy of $\g$.  
But we can combine equation \eqref{limit of f preserving energy} with the following Lemma to show that $f$ preserves the energies of the paths in $\G$.  

\begin{lemma}
Let $\g \in \G$.  
Then for the sequence $(\rho_k)$ chosen sufficiently small, we have that
	\begin{equation*}
	f^*E(\g) = \lim_{k \to \infty} f_k^*E(\g).
	\end{equation*}
\end{lemma}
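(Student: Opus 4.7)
The plan is to bracket $f^*E(\g)$ between $\lim_k f_k^*E(\g)$ and itself. The upper bound $f^*E(\g) \leq \lim_k f_k^*E(\g)$ follows from lower semicontinuity of the partition sums under uniform convergence $f_k \to f$, and the reverse inequality from applying Corollary \ref{perturbed energy corollary} to each $f_k$ with the perturbation $g = f$, exploiting the estimate $|f(x) - f_k(x)| \leq \sum_{j \geq k} \rho_j$ that holds uniformly in $x$.

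For the upper bound, I would fix any partition $P = \{a = t_0 < \cdots < t_n = b\}$ of the domain of $\g$ and note that $S_P(h) := \sum_{j=1}^n |h\g(t_j) - h\g(t_{j-1})|^2 / (t_j - t_{j-1})$ depends continuously on $h$ under uniform convergence. Hence $S_P(f) = \lim_k S_P(f_k) \leq \lim_k f_k^*E(\g) = E(\g)$, where the last equality uses the preceding Lemma. Taking the supremum over $P$ yields $f^*E(\g) \leq E(\g) < \infty$, which both delivers the desired inequality and verifies the finiteness hypothesis needed to invoke Corollary \ref{perturbed energy corollary} at $f$ should one wish.

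For the reverse direction, I would apply Corollary \ref{perturbed energy corollary} to each $f_k$ (using $f_k^*E(\g) = E(\psi_k \circ \g) < \infty$), taking $C = 1 + 1/k$, $\l = 1/k$, and the finite collection of paths $\G_k$. This produces $\d_k := \d(f_k, 1/k, 1 + 1/k, \G_k) > 0$. I would then impose on the recursive construction the additional smallness condition that $\sum_{j \geq k} \rho_j < \d_k$ for every $k$ (a self-referential condition on $(\rho_k)$, analogous to the one in the injectivity argument of Subsection 7.1: since $\d_k$ varies continuously in the $\rho_j$'s and remains positive when they are small, it can be satisfied by a recursive choice). This gives $|f(x) - f_k(x)| < \d_k$ uniformly in $x$, whence the Corollary yields $f_k^*E(\g) < (1 + 1/k)\, f^*E(\g) + 1/k$ for every $\g \in \G_k$. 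Since every $\g \in \G$ lies in $\G_k$ from some stage onwards, letting $k \to \infty$ gives $\lim_k f_k^*E(\g) \leq f^*E(\g)$; combined with the upper bound this proves the Lemma.

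The main obstacle is bookkeeping: the new tail conditions on $(\rho_k)$ must coexist with every other smallness demand on $(\rho_k)$ throughout Section 7 (injectivity in Subsection 7.1 and failure of local Lipschitzness in Subsection 7.3). This is not a genuine obstruction, however, because each stage $k$ introduces only finitely many new constraints --- one for each path in the finite set $\G_k$ --- and each such constraint restricts only the tail of $(\rho_j)$, so a single recursive specification of the full sequence accommodates them all in the same manner already used to verify injectivity.
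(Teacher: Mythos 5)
There is a genuine gap: both halves of your argument treat the target $\R^{3n+6,1}$ as if it were a metric space, but it is not, and the paper's proof hinges precisely on getting around this. For the upper bound, your partition sums $S_P(h)$ either use the Euclidean norm (in which case $\sup_P S_P(f)$ is not $f^*E(\g)$, which is computed with the Lorentzian form) or the Lorentzian form (in which case the sums are not monotone under refinement of $P$ --- Lemma \ref{triangle inequality for energy} requires a genuine triangle inequality --- so neither $S_P(f_k)\leq f_k^*E(\g)$ nor $\sup_P S_P(f)=f^*E(\g)$ is available). The paper explicitly notes that $E$ is neither upper nor lower semicontinuous in this setting. For the reverse inequality, Corollary \ref{perturbed energy corollary} is a statement about maps into a \emph{metric space} $\Y$; its proof runs through Lemma \ref{triangle inequality for energy} and cannot be invoked with $\Y=\R^{3n+6,1}$. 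Indeed, a small perturbation in the negative direction can \emph{decrease} energy by a large amount --- that is exactly the mechanism of Lemma \ref{technical lemma} --- so no perturbation estimate of this kind can hold for the full Lorentzian energy.

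The missing idea is Remark \ref{negative direction does not change}: for $k$ beyond the stage $i$ at which $\g$ enters $\G_i$, the negative coordinate $\pi^-\circ f_k$ is \emph{constant in $k$} on $\text{im}(\g)$. This lets one split $E(f_k\circ\g)=E(f_k^+\circ\g)+C_\g$ with $C_\g:=E(f_k^-\circ\g)$ a fixed nonpositive constant, reducing everything to the maps $f_k^+,f^+:\X\to\E^{3n+6}$ into an honest metric space. There your two steps do work: lower semicontinuity of the Euclidean energy gives $E(f\circ\g)\leq\lim_k E(f_k\circ\g)$, and Theorem \ref{perturbed energy theorem} / Corollary \ref{perturbed energy corollary} applied to $f_k^+$ (with the recursive tail condition on $(\rho_k)$ that you correctly describe, which is indeed how the paper handles the bookkeeping) gives the reverse inequality after adding back $C_\g$. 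Without the stabilization of the negative coordinate along $\text{im}(\g)$, the argument does not close.
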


\begin{proof}	
First note that the energy functional $E()$ is neither upper nor lower semi-continuous in our setting since the metric in $\R^{3n+6,1}$ has both positive and negative eigenvalues.  
But the key observation is due to Remark \ref{negative direction does not change}.  
For all $k, l > i$ (where $i$ is still the minimal stage at which $\g$ appears in $\G_i$) 
	\begin{equation*}
	\pi^- \circ f_k (x) = \pi^- \circ f_l (x) 	\qquad	\text{for all }x \in \text{im}(\g).
	\end{equation*}
That is, the negative coordinate for any point in $f_k(\text{im}(\g))$ never changes after the $i^{th}$ stage.  
This implies that the energy functional restricted to $\g$ is lower semicontinous, that is, 
	\[
	E(f \circ \g) \leq \lim_{k \to \infty} E(f_k \circ \g) = E(\g).
	\]
	
Showing the reverse inequality is mainly an application of Theorem \ref{perturbed energy theorem}.
To this end, first note that $f_k(\text{im}(\g))$ is a piecewise linear path in $\R^{3n+6,1}$ (this is clear since $h_k$ is piecewise linear).  
Let $y_0, y_1, \hdots, y_m$ denote the break points of $f_k(\text{im}(\g))$ in $\R^{3n+6,1}$.  
Then
	\[
	E(f_k \circ \g) = E(h_k \circ \psi_k \circ \g) = \sum_{j=1}^m \langle y_j - y_{j-1}, y_j - y_{j-1} \rangle.
	\]
But the bilinear form $\langle , \rangle$ splits over the positive and negative direction(s) of $\R^{3n+6,1}$, giving
	\begin{align}
	E(f_k \circ \g) &= \sum_{j=1}^m \langle \pi^+(y_j - y_{j-1}), \pi^+(y_j - y_{j-1}) \rangle_{\E^{3n+6}} - \sum_{j=1}^m (\pi^-(y_j - y_{j-1}))^2  \label{splitting form}  \\
	&= E(\pi^+ \circ f_k \circ \g) + E(\pi^- \circ f_k \circ \g)  \label{splitting energy}.
	\end{align}
In equation \eqref{splitting form} the notation $\langle , \rangle_{\E^{3n+6}}$ denotes the Euclidean quadratic form on $\R^{3n+6,0} \cong \E^{3n+6}$, and in equation \eqref{splitting energy} the different energy functionals are with respect to the quadratic forms on $\R^{3n+6,0}$ and $\R^{0,1}$, respectively.
Equation \eqref{splitting energy} shows that
	\begin{equation}\label{positive energy}
	E(f_k^+ \circ \g) = E(f_k \circ \g) - E(f_k^- \circ \g).
	\end{equation}
where $f_k^+ = \pi^+ \circ f_k$ and similarly for $f_k^-$.
The importance of equation \eqref{positive energy} is as follows.  
For $k > i$, the map $f_k^- \circ \g$ is fixed and so does not depend on $k$.  
Therefore $E(f_k^- \circ \g)$ is a nonpositive constant.
Naming this constant $C_\g$, we then have that
	\begin{equation*}
	E(f_k^+ \circ \g) = E(f_k \circ \g) - C_\g
	\end{equation*}
or
	\begin{equation*}
	(f_k^+)^*E(\g) = f_k^*E(\g) - C_\g.
	\end{equation*}

The function $f_k^+: \X \to \R^{3n+6,0} \cong E^{3n+6}$ is now a map between metric spaces, and so we can apply Theorem \ref{perturbed energy theorem}.  
For all $k \geq i$ choose 
	\begin{equation*}
	\rho_{k+1} < \frac{1}{2} \min \left\{ \rho_k, \d (f_k^+, \frac{1}{k}, \left( 1 + \frac{1}{k} \right), \g)  \right\}
	\end{equation*}
where $\d$ is as in Theorem \ref{perturbed energy theorem}.
We then have that
	\begin{equation*}
	| f^+(x) - f_k^+(x) | < \sum_{i = k+1}^\infty \rho_k < \d (f_k^+, \frac{1}{k}, \left( 1 + \frac{1}{k} \right), \g).
	\end{equation*}
Therefore, by Theorem \ref{perturbed energy theorem} we see that
	\begin{align*}
	&(f_k^+)^*E(\g) < \left( 1 + \frac{1}{k} \right) (f^+)^*E(\g) + \frac{1}{k}  \\
	\Longrightarrow \qquad &\lim_{k \to \infty} (f_k^+)^*E(\g) \leq (f^+)^*E(\g)  \\
	\Longrightarrow \qquad &\lim_{k \to \infty} (f_k^+)^*E(\g) + C_\g \leq (f^+)^*E(\g) + C_\g  \\
	\Longrightarrow \qquad &\lim_{k \to \infty} f_k^*E(\g) \leq f^*E(\g).
	\end{align*}
Note that at each stage $i$ we introduce several new geodesics to $\G_i$.  
So, instead of applying Theorem \ref{perturbed energy theorem}, we really apply Corollarly \ref{perturbed energy corollary}.  
\end{proof}

This completes the proof of the compact version of the Main Theorem.

\vskip 20pt

\subsection{Verifying that $f$ can be constructed so that $f^+$ is not locally Lipschitz}
Let $x \in \X$ and choose a path $\a:[0,1] \to \X$ with $\a(0) = x$ and whose image does not correspond to a geodesic in $\G$.  
Moreover, assume that the image of $\a$ intersects the image of any $\g \in \G$ in at most a finite number of points (which can always be done assuming the local covering dimension about $x$ is greater than one).
For sufficiently small choices of $\d_k$ at each stage $k$, the only portions of the image of $\a$ that will be contained in $\scrU_k \cup \scrV_k$ are portions near points of intersection with either members of $\D_k^\prime$ or geodesic segments in $\G_k$.  
If this is the situation for all $k$ then it is clear that 
	\begin{equation*}
	\lim_{k \to \infty} \psi_k^*E(\a) = \infty
	\end{equation*}
for sufficiently large choices of $M_k$, since the image of $\psi_k \circ \a$ will lie in simplices of $\N_k$ whose vertices all correspond to members of $\scrW_k$.  

If we put further restrictions on $\rho_k$ by adding $\a$ to the list of paths for which we apply Corollary \ref{perturbed energy corollary}, then we will have that
	\begin{equation*}
	\lim_{k \to \infty} f^*E(\a) = \infty.
	\end{equation*}
Even more, we will have that $f^*E(\a|_{[a,b]}) = \infty$ for any $0 < a < b < 1$.  
So we see that $f$ will not be locally Lipschitz at any point contained in the image of $\a$ and, in particular, at $x$.  

So to ensure that $f^+$ is not locally Lipschitz, at each stage $i$ we choose a point $x_i$ and a path $\a_i$ as above.  
We choose the points $(x_i)$ in such a way that the collection of all $x_i$ will be dense in $\X$.  
Then at each stage $i$ we add the requirement desired above for our choice of $\d_i$, and we add $\a_i$ to the list of geodesics for which we apply Corollary \ref{perturbed energy corollary}.

\vskip 20pt

\subsection{Adjusting the proof to deal with proper rather than compact spaces}
At each stage $i$ nearly every set in our construction is finite.  
So it should not really matter if $\X$ is compact or not.  
But we need each open cover $\O_i$ to have a positive Lebesgue number $\d_i$, and this is the main place where we use compactness.

If $\X$ is proper instead of compact then we proceed as follows.  
Fix $x \in \X$, and for each stage $i$ let $r_i$ be a positive number large enough so that im($\G_i) \subset B(x,\frac{1}{2} r_i)$.  
We also choose $r_i > r_{i-1}$.   
At stage $i$ we perform the construction with $\X = B(x,r_i)$.  
We will then have a positive Lebesgue number $\d_i$, but the map $\psi_i$ will only be defined on the region $B(x,r_i)$.  
When we extend to stage $(i+1)$ the domain of $\psi_i$ will increase, but we will have to restrict the domain of $\v_{i+1,i}$ to the image of $B(x,r_i)$ under $\psi_{i+1}$.
This restricts the domain of $f_{i+1}$ to $B(x,r_i)$.  
These restrictions do not present an issue though, since at stage $(i+2)$ we will be able to define $\v_{i+2,i+1}$ on the image of $B(x,r_{i+1})$ under $\psi_{i+2}$.  
We can therefore extend the domain of $f_{i+2}$ to $B(x,r_{i+1})$.  
So in the limit the domains of $(f_i)$ will exhaust $\X$, and the limiting map $f$ will again be continuous, injective, and will preserve the energy of the paths in $\G$.

\vskip 20pt
\section{Preliminaries and the proof of Proposition \ref{Proposition}}\label{section: preliminaries}

\subsection{The parallelogram law, Rademacher's Theorem, and the proof of Proposition \ref{Proposition}}

Given a norm $\| \cdot \|$ on a vector space $V$, a natural question is whether or not this norm is induced by some inner product $\langle , \rangle$.  
ie, does there exist an inner product $\langle , \rangle$ on $V$ such that $\| x \|^2 =\langle x, x \rangle$ for all $x \in V$?   
It is well known that the norm $\| \cdot \|$ is induced by an inner product if and only if 
	\begin{equation}\label{parallelogram law}
	2 \|x\|^2 + 2\|y\|^2 = \|x+y\|^2 + \|x-y\|^2
	\end{equation}
for all $x, y \in V$.  
Equation \eqref{parallelogram law} is called the {\it parallelogram law}.

Let $(\X, \dx)$ and $(\Y, \dy)$ be metric spaces and $f:\X \rightarrow \Y$ a continuous map.  
The map $f$ is \emph{1-Lipschitz} (or {\it short}) if for any two points $x, x' \in \X$ we have that $d_{\Y}(f(x), f(x')) \leq d_{\X}(x, x')$.  
Conversely, we say that $f$ is {\it expanding} (or 1-expanding) if $\dx(x,x') \leq \dy(f(x),f(x'))$ for all $x, x' \in \X$.  

The statement of Rademacher's Theorem, as can be found in \cite{Federer}, is as follows

\vskip 10pt

\begin{theorem}[Rademacher's Theorem]\label{Rademacher}
Suppose that $f:U \to \R^n$ is Lipschitz where $U \subseteq \R^m$ open.  
Then $f$ is differentiable at almost all points of $U$ (with respect to the Lebesgue measure), meaning that for almost all points $u \in U$, there exists a linear map $L_u: \R^m \to \R^n$ such that
	\begin{equation*}
	\lim_{x \to u} \frac{| f(x) - f(u) - L_u(x-u)|}{|x-u|} = 0.
	\end{equation*}
\end{theorem}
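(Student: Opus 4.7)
The plan is to follow the classical proof along the standard reduction: first to scalar-valued $f$ by handling components separately, then from partial/directional derivatives existing a.e.\ to full differentiability a.e. Since the conclusion is coordinate-wise on the target, I would immediately reduce to the case $f: U \to \mathbb{R}$, noting that if each component $f_i$ is differentiable at $u$, then $f$ is differentiable at $u$ with differential assembled from the $L_u^{(i)}$.

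Next I would establish directional differentiability a.e.\ using a Fubini argument. For any unit vector $v \in \mathbb{R}^m$, the one-dimensional restriction $t \mapsto f(u + tv)$ is Lipschitz, hence absolutely continuous, hence differentiable at a.e.\ $t$ by the classical Lebesgue theorem. Applying Fubini on the product decomposition $U \cong v^\perp \times \mathbb{R}$ shows that the directional derivative
\[
D_v f(u) := \lim_{t \to 0} \frac{f(u+tv) - f(u)}{t}
\]
exists for a.e.\ $u \in U$. Pick a countable dense set $\{v_k\} \subset S^{m-1}$ (augmented by the standard basis) and let $E \subseteq U$ be the full-measure set where $D_{v_k} f$ exists for every $k$ and where the coordinate partials $\partial_i f$ exist.

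The main obstacle is the third step: upgrading ``directional derivatives exist along a countable dense family'' to genuine Fr\'echet differentiability, and along the way showing linearity of $v \mapsto D_v f(u)$ for a.e.\ $u$. For linearity I would use the distributional identity
\[
\int_U D_v f(u)\, \varphi(u)\, du = -\int_U f(u)\, D_v \varphi(u)\, du \qquad \forall\, \varphi \in C_c^\infty(U),
\]
justified by dominated convergence and the uniform Lipschitz bound $|D_v f| \le L|v|$. The right-hand side is manifestly linear in $v$; combined with the $L^\infty$ bound on $D_v f$, a standard approximation argument gives a full-measure set $E' \subseteq E$ on which $v \mapsto D_v f(u)$ is linear and equal to $\nabla f(u) \cdot v$ where $\nabla f(u) = (\partial_1 f(u), \ldots, \partial_m f(u))$.

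Finally I would verify Fr\'echet differentiability on $E'$ by the classical density argument. Fix $u \in E'$ and pick finitely many $v_1, \dots, v_N$ from the countable dense set so that every unit $v$ is within $\varepsilon$ of some $v_k$. For $h$ small, write $v = h/|h|$, choose the nearest $v_k$, and decompose
\[
\frac{f(u+h) - f(u)}{|h|} - \nabla f(u) \cdot v
= \underbrace{\frac{f(u+h) - f(u + |h|v_k)}{|h|}}_{|\,\cdot\,| \le L\varepsilon} + \underbrace{\frac{f(u+|h|v_k) - f(u)}{|h|} - D_{v_k} f(u)}_{\to 0 \text{ as } h \to 0} + \underbrace{(D_{v_k} f(u) - \nabla f(u) \cdot v)}_{|\,\cdot\,| \le L\varepsilon}.
\]
Taking $\limsup_{h \to 0}$ gives a bound of $2L\varepsilon$, and since $\varepsilon$ was arbitrary the limit is $0$, yielding $L_u = \nabla f(u)^T$ as required. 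The delicate point is that the choice of $v_k$ depends on $h$, so care is needed with the Lipschitz estimate; but this is exactly what allows the finite net $\{v_k\}$ to do the job uniformly.
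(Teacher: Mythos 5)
The paper does not prove this statement; it is quoted as a classical result with a citation to Federer, so there is no internal proof to compare against. Your outline is the standard textbook proof of Rademacher's theorem (reduction to scalar targets, Fubini plus the one-dimensional Lebesgue theorem for directional derivatives, linearity of $v \mapsto D_v f$ via integration against test functions, and the finite $\varepsilon$-net argument to upgrade to Fr\'echet differentiability), and all four steps are correct as sketched. The only points that would need explicit care in a full write-up are the measurability of the set where $D_v f(u)$ exists (needed to apply Fubini) and the observation that the middle term in your final decomposition tends to $0$ uniformly over the finitely many directions in the net, which you have already flagged.
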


\vskip 10pt

We now prove Proposition \ref{Proposition} from the Introduction.


\begin{proof}[Proof of Proposition \ref{Proposition}]
It is clear that we must have that $p > 0$, and the necessity of $q > 0$ is by Le Donne's result.  
To show that both $\pi_p \circ f$ and $\pi_q \circ f$ cannot be locally Lipschitz, suppose the contrary.  
Clearly, if one of these maps is locally Lipschitz but the other is not, then the map $f$ is not an isometry.  
So they must both be locally Lipschitz.
But then $f$ is locally Lipschitz, and so by the same argument as in \cite{Le Donne} the Finsler norm on $\X$ must be induced by the map $f$.  

Let $f^+ := \pi_p \circ f$ and $f^- := \pi_q \circ f$.  
Let $Q^+$ and $Q^-$ denote the quadratic forms induced by $f^+$ and $f^-$, respectively.
If $| \cdot |$ denotes the Finsler norm on $\X$, then at any point $p \in \X$ we must have that
	\[
	| \cdot |_p = Q^+_p + Q^-_p 	\qquad	\Longrightarrow 	\qquad 	| \cdot |_p - Q^-_p = Q^+_p.
	\]
Since $Q^+_p$ is induced by an inner product, it satisfies the parallelogram law \eqref{parallelogram law}.  
Thus, $| \cdot |_p - Q^-_p$ must satisfy the parallelogram law.  
But $-Q^-_p$ is also induced by an inner product and therefore satisfies the parallelogram law as well.
Then a simple calculation shows that this implies that $| \cdot |_p$ must also satisfy the parallelogram law.  
Therefore $| \cdot |_p$ is induced by an inner product and hence $(\X, | \cdot |)$ is Riemannian, a contradiction.
\end{proof}

\vskip 15pt

\subsection{Covering Dimension and the Nerve of an Open Cover}
		
The definitions in this Subsection are from \cite{Nagami} and/or \cite{Nagata}.
			
An open covering $\mathscr{U}$ of a space $\X$ is said to be of \emph{order $n$} if for all $x \in X$ there exists a neighborhood $U$ of $x$ such that $U$ has non-trivial intersection with at most $n$ members of $\mathscr{U}$.  
The \emph{covering dimension} or \emph{topological dimension} of a metric space $\X$, denoted by dim$(\X)$, is at most $n$ if every open covering of $\X$ can be refined by an open covering whose order is at most $n + 1$.  
If dim$(\X) \leq n$ and dim$(\X) \nleq n - 1$, then we say that dim$(\X) = n$.  
Of course, if dim$(\X) \nleq n$ for any $n$, then we say that dim$(\X) = \infty$.
			
As always, in the above definition we call an open cover $\mathscr{V}$ a \emph{refinement} or another open cover $\mathscr{U}$ if for every $V \in \mathscr{V}$ there exists $U \in \mathscr{U}$ such that $V \subset U$.  
The \emph{mesh} of an open cover of a metric space $\X$ is the supremum of the diameters of the open sets contained in that cover.
	
Let $\{ U_\a \}_{\a \in I}$ be an indexed open covering of a topological space $\X$.  
Assume that $U_\a \neq \emptyset$ for all $\a$.  
The \emph{nerve} $\calN$ associated with $\{ U_\a \}$ is the abstract simplicial complex whose simplices are defined as follows:
	\begin{enumerate}
		\item $\emptyset \in \calN$
		\item a subset $J \subset I$ is contained in $\calN$ if and only if $\ds{\bigcap_{\a \in J}  U_\a  \neq \emptyset}$
	\end{enumerate}
It is not hard to see that the collection $\calN$ defined above satisfies the conditions to be an abstract simplicial compex.  An important observation is that if an open covering has order $n + 1$, then its corresponding nerve has dimension $n$.

\vskip 20pt

\subsection{Taut Chains and a Theorem due to Bridson}
	
The majority of the material in this Subsection comes from \cite{Bridson} and/or \cite{BH}.

Let $(\X, \T)$ be a Euclidean polyhedron and let $x, y \in \X$.  
An \emph{m-chain from x to y} is an $(m+1)$-tuple $C = (x_0, x_1, ..., x_m)$ of points in $\X$ such that $x = x_0$, $y = x_m$, and for each index $i > 0$ there exists a simplex $S(i) \in \T$ such that $x_{i-1}, x_i \in S(i)$.  
Every $m$-chain determines a pl path in $\X$ given by concatenation of the line segments $x_{i-1}x_i$.  
Given an $m$-chain $C= (x_0, x_1, ..., x_m)$ we can compute the length of the associated pl path as 
\[
\ell(C) := \sum_{i = 1}^{m} d_{S(i)}(x_{i-1}, x_i)
\]
where $d_{S(i)}(,)$ denotes the induced length metric on $S(i)$.

An $m$-chain $C = (x_0, x_1, ..., x_m)$ in a Eucildean polyhedron $(\X, \T)$ is \emph{taut} if it satisfies the following two conditions for all $1 \leq i \leq m-1$:
	\begin{enumerate}
	\item No simplex contains all three of the points $\{ x_{i-1}, x_i, x_{i+1} \}$.
	\item If $x_{i-1}, x_i \in S(i)$ and $x_i, x_{i+1} \in S(i+1)$ then the concatenation of the line segments $x_{i-1}x_i$ and $x_i x_{i+1}$ is a geodesic segment in $S(i) \cup S(i+1)$.
	\end{enumerate}

Notice that only the first and last points in a taut chain can be contained in the interior of a maximal simplex of $\T$.  Condition (1) is really a technical condition which allows us to disregard chains which are not, in some sense, minimal.  Intuitively, it is easy to see why we would consider condition (2) above if we are trying to identify which chains correspond to geodesics in our space $\X$.  
The next straightforward Lemma can be found in \cite{Bridson} or \cite{BH}.
		
\begin{lemma}
If, for some fixed integer $m$, $C$ is an $m$-chain from $x$ to $y$ in $(\X, \T)$ of minimal length, then there exists a taut $n$-chain $C'$, with $n \leq m$, such that the path determined by $C'$ is exactly the same as the path determined by $C$.
\end{lemma}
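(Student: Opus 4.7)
The plan is to obtain $C'$ by iteratively deleting superfluous interior points of $C$, and the whole argument hinges on a preservation property of minimality: if $C_1$ is the $(m-1)$-chain obtained from $C$ by removing one point (in such a way that the result is still a valid chain), then $C_1$ is itself minimal among $(m-1)$-chains with the same endpoints. Otherwise, a shorter $(m-1)$-chain could be padded with one redundant midpoint inserted inside any of its segments (automatically preserving the chain condition, since the midpoint lies in the same simplex as its two neighbors) to produce an $m$-chain of length strictly less than $\ell(C)$, contradicting minimality of $C$. By iteration, every chain produced by a sequence of such deletions is minimal among chains of its own cardinality.

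First I would check that whenever three consecutive points $\{x_{i-1}, x_i, x_{i+1}\}$ of a minimal chain lie in a common simplex $S$, the middle point $x_i$ must lie on the Euclidean segment from $x_{i-1}$ to $x_{i+1}$ inside $S$. Indeed, replacing $x_i$ by any point $x_i'$ on that segment yields a valid $m$-chain, both of whose new consecutive pairs still lie in $S$, whose length is $\ell(C) - d(x_{i-1},x_i) - d(x_i, x_{i+1}) + d(x_{i-1}, x_{i+1})$; by the Euclidean triangle inequality in $S$ this is at most $\ell(C)$, and minimality of $C$ forces equality, so $x_i$ lies on the segment. Deleting $x_i$ therefore does not alter the underlying piecewise linear path in $\X$. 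Iterating this deletion as long as condition $(1)$ of tautness fails, the procedure terminates after finitely many steps and produces a chain $C' = (y_0, \ldots, y_n)$ with $n \leq m$, the same underlying path and same length as $C$, and satisfying condition $(1)$.

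Condition $(2)$ for $C'$ then follows from its inherited minimality among $n$-chains. Suppose for contradiction that $y_{i-1}, y_i \in S(i)$ and $y_i, y_{i+1} \in S(i+1)$ but the concatenation of the two line segments is not a geodesic in $S(i) \cup S(i+1)$. Then the true geodesic between $y_{i-1}$ and $y_{i+1}$ in this union must cross the intersection $S(i) \cap S(i+1)$ at some point $y_i' \neq y_i$, and replacing $y_i$ with $y_i'$ yields an $n$-chain strictly shorter than $C'$, contradicting the preserved minimality. Hence both tautness conditions hold and $C'$ is the desired chain.

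The only real subtlety is the preservation of minimality under deletion, together with the padding trick that ports a would-be counterexample at cardinality $n$ back to a contradiction at cardinality $m$; once that bookkeeping is in place, both tautness conditions reduce to variants of a single perturbation argument — collinearity inside one simplex giving condition $(1)$ and perturbation across a pair of adjacent simplices giving condition $(2)$.
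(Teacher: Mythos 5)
Your argument is correct. Note that the paper itself gives no proof of this lemma --- it is quoted as a known ``straightforward'' fact from Bridson's thesis and Bridson--Haefliger --- so there is nothing to compare against line by line; but your proof is essentially the standard straightening argument used in those sources. The one step that genuinely needs the care you give it is the padding trick showing that minimality descends to the shorter chains produced by deletion (insert a midpoint of a segment of any would-be shorter $(m-1)$-chain to contradict minimality at cardinality $m$); with that in hand, condition (1) follows from collinearity forced by equality in the triangle inequality inside a single Euclidean simplex, and condition (2) follows because a failure would let you replace $y_i$ by a point where a shorter path crosses $S(i)\cap S(i+1)$, contradicting the inherited minimality. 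Two small points worth making explicit: the geodesic from $y_{i-1}$ to $y_{i+1}$ must cross $S(i)\cap S(i+1)$ only because condition (1) already holds (otherwise all three points could lie in one of the two simplices), and a path in $S(i)\cup S(i+1)$ joining two points of $S(i)$ has length at least their distance in $S(i)$, which justifies the final length comparison.
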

		
\begin{corollary}
$d_{\X}(x, y) = \text{inf} \, \{ l(C) \, | \, C $ is a taut chain from $x$ to $y$ $\} $.
\end{corollary}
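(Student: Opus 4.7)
The plan is to prove the two inequalities separately. For the easy direction $\inf\{\ell(C)\} \geq d_{\X}(x,y)$, I would simply observe that any taut chain $C$ from $x$ to $y$ determines a piecewise linear path in $\X$ joining $x$ to $y$ whose length as a path equals $\ell(C)$; since $d_{\X}(x,y)$ is defined as the infimum of lengths of all paths between $x$ and $y$, the inequality is immediate.

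For the reverse inequality $d_{\X}(x,y) \geq \inf\{\ell(C)\}$, I would fix $\epsilon > 0$ and choose a continuous path $\gamma : [0,1] \to \X$ from $x$ to $y$ with $\ell(\gamma) < d_{\X}(x,y) + \epsilon$. Using local finiteness of $\T$ and compactness of $\gamma([0,1])$, I can select a subdivision $0 = t_0 < t_1 < \cdots < t_m = 1$ fine enough that for each $i$ the sub-arc $\gamma([t_{i-1},t_i])$ lies inside a single simplex $S(i) \in \T$. Setting $x_i := \gamma(t_i)$ produces an $m$-chain $C = (x_0,\ldots,x_m)$ from $x$ to $y$, and because each $S(i)$ is affinely isometric to a Euclidean simplex in which line segments realize distance, we get $d_{S(i)}(x_{i-1},x_i) \leq \ell(\gamma|_{[t_{i-1},t_i]})$ and hence $\ell(C) \leq \ell(\gamma) < d_{\X}(x,y) + \epsilon$.

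The remaining step is to upgrade $C$ to a taut chain without increasing its length. I would do this iteratively: if any three consecutive vertices $x_{i-1},x_i,x_{i+1}$ lie in a common simplex $S$, the triangle inequality inside $S$ lets me delete $x_i$ and produce a strictly shorter $(m-1)$-chain; and if the broken line $x_{i-1}x_i \cup x_i x_{i+1}$ fails to be a geodesic inside $S(i) \cup S(i+1)$, I can replace $x_i$ by the point at which the true geodesic from $x_{i-1}$ to $x_{i+1}$ in $S(i) \cup S(i+1)$ crosses $S(i) \cap S(i+1)$, strictly decreasing length while keeping the link count fixed. The main obstacle is guaranteeing that this modification terminates in a taut chain of length at most $\ell(C)$, since a priori the replacements might continue indefinitely. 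The cleanest way around this, and the route I would take, is to invoke the preceding Lemma in combination with the Moussong-style fact referenced in the excerpt: the infimum of lengths of $m$-chains from $x$ to $y$ is attained by some minimum-length $m$-chain $C^*$, and the Lemma then supplies a taut chain $C'$ with $n \leq m$ whose associated path coincides with that of $C^*$, so $\ell(C') = \ell(C^*) \leq \ell(C) < d_{\X}(x,y) + \epsilon$. Letting $\epsilon \to 0$ gives $\inf\{\ell(C) : C \text{ taut}\} \leq d_{\X}(x,y)$, completing the proof.
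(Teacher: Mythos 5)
The first thing to note is that the paper offers no proof of this Corollary: it is quoted, together with the preceding Lemma, as a known fact from Bridson's thesis and Bridson--Haefliger, so there is no in-paper argument to compare against. Your easy direction is fine, and your use of the preceding Lemma (plus the existence of a minimal-length $m$-chain) to handle termination of the tautening process is the right move.

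The genuine gap is in the reduction from paths to chains. First, the subdivision you want need not exist: a continuous path can cross a codimension-one face infinitely often in every neighborhood of a point (e.g.\ a path tracking $t\mapsto(t,\,t^{2}\sin(1/t))$ across an edge of a triangulated plane), and then no finite subdivision puts every sub-arc inside a single closed simplex. Local finiteness and compactness give a Lebesgue number for the cover by \emph{open stars}, not for the cover by closed simplices, and two points of a common open star need not lie in a common simplex, so even the weaker requirement that consecutive $x_{i-1},x_i$ share a simplex is not automatic. Second, even where a sub-arc does lie in a single simplex $S(i)$, the inequality $d_{S(i)}(x_{i-1},x_i)\le\ell\bigl(\gamma|_{[t_{i-1},t_i]}\bigr)$ is not immediate: the length on the right is computed in the ambient length metric of $\X$, which satisfies $d_{\X}\le d_{S(i)}$ on $S(i)$ and can be strictly smaller (two points of a simplex near a cone point of total angle less than $2\pi$ admit a shortcut through neighboring simplices), so convexity of $S(i)$ alone does not give the bound. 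Ruling out such shortcuts is essentially the content of the Corollary, so assuming it here is close to circular. This is precisely why the argument in Bridson--Haefliger is more delicate: it introduces, for each $x$, a radius $\varepsilon(x)$ so that the ball of that radius lies in the star of the carrier of $x$, and compares distances with chain-lengths locally before globalizing. Your outline has the right shape, but these two steps need that machinery to close.
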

		
In \cite{Bridson} and \cite{BH} the metric simplicial complexes which are considered are \emph{not} locally finite, which is the case that we will be interested in for this paper.  
But they do satisfy a different sort of ``local compactness" property.  For $\T$ a metric simplicial complex, let \emph{Shapes($\T$)} denote the isometry types of the simplices of $\T$.  Then the condition required in \cite{Bridson} and \cite{BH} is that Shapes($\T$) be finite.
		
We are now prepared to state the Theorem due to Bridson that we will need in Section \ref{Proof of Key Lemma}.

\vskip 10pt
		
\begin{theorem}[Bridson]\label{Bridson}
Let $(\X, \T)$ be a Euclidean polyhedron with Shapes($\T$) finite.  Then for every $\l > 0$ there exists an integer $N > 0$, which depends only on Shapes($\T$), such that for every taut $m$-string in $\T$ of length at most $\l$ we have that $m \leq N$.
\end{theorem}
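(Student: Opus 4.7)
The plan is to reduce the theorem to a uniform lower bound on the length of pairs of consecutive edges in a taut chain, which then implies $m \leq 2\l/\epsilon$ for some $\epsilon = \epsilon(\text{Shapes}(\T)) > 0$. Specifically, once I establish the key lemma stated below, I can partition the edges of a taut $m$-chain $C = (x_0, x_1, \ldots, x_m)$ into $\lfloor m/2 \rfloor$ disjoint taut $2$-subchains $(x_{2k}, x_{2k+1}, x_{2k+2})$, and summing gives $\ell(C) \geq \lfloor m/2 \rfloor \epsilon$. Setting $N = \lceil 2\l/\epsilon \rceil + 1$ then completes the proof.

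Before attacking the key lemma, I would first record the structural consequences of tautness. Condition (1) forces $S(i) \neq S(i+1)$ for all $i$, since otherwise $\{x_{i-1}, x_i, x_{i+1}\} \subset S(i)$. Consequently $x_i \in S(i) \cap S(i+1) = F_i$, a proper common face. Condition (2), that the concatenation $x_{i-1}x_i \cup x_i x_{i+1}$ is a local geodesic in $S(i) \cup S(i+1)$, can be analyzed by isometrically unfolding $S(i) \cup S(i+1)$ into a single Euclidean space across $F_i$: in the developed picture, the three points $x_{i-1}, x_i, x_{i+1}$ are collinear, and the straight segment crosses $F_i$ transversally (by condition (1), which prevents the segment from being absorbed into a single simplex).

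The heart of the proof is the following key lemma: there exists $\epsilon = \epsilon(\text{Shapes}(\T)) > 0$ such that for every taut $2$-chain $(x_{i-1}, x_i, x_{i+1})$,
\[
d(x_{i-1}, x_i) + d(x_i, x_{i+1}) \geq \epsilon.
\]
I would establish this by contradiction and compactness. Assume there is a sequence of taut $2$-chains with summed segment length tending to $0$. Since Shapes($\T$) is finite, only finitely many isometry classes of adjacent-simplex pairs $(S, T)$ glued along a face $F$ can arise, so after passing to a subsequence we may work inside a single fixed developed gluing $S \cup_F T \subset \mathbb{E}^d$. The points $x_{i-1}^{(n)}, x_i^{(n)}, x_{i+1}^{(n)}$ then converge to a common point $p \in F$, and the limiting configuration would correspond to a degenerate straight segment entirely contained in $F$ (in particular, in a single simplex of $S \cup_F T$). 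Propagating this back to the original chain for large $n$ contradicts condition (1), since the configuration could be covered by a single simplex. The main obstacle I expect is making this compactness argument fully precise when $x_{i-1}$ or $x_{i+1}$ itself lies on a lower-dimensional face of $F_i$: one must carefully select, for each simplex containing $x_i$, the correct ambient pair of top-dimensional shapes in which to apply the compactness argument, and check that no degeneracy slips through the limit.

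With the key lemma in hand, the proof concludes by the disjoint-$2$-chain partition argument above, giving the explicit bound $N(\l, \text{Shapes}(\T)) = \lceil 2\l/\epsilon \rceil + 1$, which is linear in $\l$ as required for the application to blowing up the metric on $\N_1$ outside the tubular neighborhoods of the geodesics of $\G_1$ in the proof of Lemma \ref{Key Lemma}.
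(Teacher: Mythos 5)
The paper does not prove this theorem; it is quoted from Bridson's thesis and from Bridson--Haefliger, so the only question is whether your argument is sound. It is not: your key lemma is false. A single taut $2$-chain can have arbitrarily small length. Take two $2$-simplices $S, T$ glued along a common edge $F$, let $x_1$ be the midpoint of $F$, and let $x_0 \in \mathrm{int}(S)$, $x_2 \in \mathrm{int}(T)$ be the points at distance $\d$ from $x_1$ on the perpendicular to $F$ in the developed picture. Then no simplex contains all three points (condition (1) holds), the concatenation is a genuine geodesic in $S \cup T$ (condition (2) holds), yet $d(x_0,x_1)+d(x_1,x_2) = 2\d$ is as small as you like. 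Your compactness argument does not detect this because condition (1) is not a closed condition: the limit configuration being degenerate (three coincident points in one face) says nothing about the chains along the sequence, each of which honestly satisfies (1). The same phenomenon persists inside long taut chains: a local geodesic passing at distance $\d$ from a vertex $v$ crosses several faces of the star of $v$ at points all within $O(\d)$ of $v$, so every consecutive $2$-subchain in that stretch has length $O(\d)$, and your partition into $2$-subchains yields no lower bound on $\ell(C)$ linear in $m$.

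The true content of the theorem is more global, and this is why the proof in Bridson--Haefliger (I.7.28) proceeds by induction on the dimension of the complex using the geometry of links. What is bounded in terms of Shapes($\T$) is not the length of a consecutive pair but (a) the number of points of a taut string that can cluster near a given face --- controlled by a taut-string count in the link, which is an all-right spherical complex with finitely many shapes one dimension lower (in dimension $2$ this is the elementary fact that a straight segment sweeps out total angle at most $\pi$ around a vertex, so it crosses at most $\pi/\a_{\min}+1$ triangles) --- together with (b) a uniform positive lower bound, depending only on Shapes($\T$), on the distance a taut string must travel between two such clusters (essentially the distance between disjoint faces of a simplex of a given shape). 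Combining (a) and (b) gives $m \leq N(\text{Shapes}(\T),\l)$. Without an ingredient of type (a), no decomposition into short subchains can work, so the gap in your argument is structural rather than a fixable technicality.
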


\vskip 20pt

\subsection*{Acknowledgements} 

The idea of studying the isometric embedding problem for geodesic metric spaces was originally mentioned to the author by Pedro Ontaneda as an idea for the author's Ph. D. thesis.  
The author would like to thank Ontaneda, as well as Jean-Fran\c{c}ois Lafont and Anton Petrunin, for various helpful remarks and suggestions throughout the duration of this research.













\begin{thebibliography}{XXXXX}

\bibitem[Ako07]{Akopyan} A.V. Akopyan, \textit{PL-analogue of Nash-Kuiper theorem}, preliminary version (in Russian): \\ 
http://www.moebiuscontest.ru/files/2007/akopyan.pdf  \\ www.moebiuscontest.ru


\bibitem[BBI01]{BBI} D. Burago, Y. Burago, S. Ivanov, \textit{A course in metric geometry},
Graduate Studies in Mathematics {\bf 33}, American Mathematical Society, Providence, RI, 2001.



\bibitem[Bri91]{Bridson} M. Bridson, \textit{Geodesics and curvature in metric simplicial complexes}, 
Ph. D. Thesis, Cornell University (1991).

\bibitem[BH91]{BH} M. Bridson and A. Haefliger, \textit{Metric spaces of non-positive curvature},
Springer-Verlag Berlin Heidelberg, 1999.

\bibitem[BI94]{BI} D. Burago and S. Ivanov, \textit{Isometric embeddings of Finsler manifolds},
St. Petersburg Math. J., {\bf 5}(1994), no. 1, 159-169.

\bibitem[BZ96]{BZ} Y.D. Burago and V.A. Zalgaller, \textit{Isometric piecewise linear immersions of two-dimensional manifolds with polyhedral metrics into $\mathbb{R}^3$},
St. Petersburg Math. J., {\bf 7}(1996), no. 3, 369-385.


\bibitem[Fed69]{Federer} H. Federer, \textit{Geometric measure theory},
Springer-Verlag New York Inc., 1969.

\bibitem[GZ15]{GZ} P. Galashin and V. Zolotov, \textit{Extensions of isometric embeddings of pseudo-Euclidean metric polyhedra},
preprint, arXiv: 1501.05037 (2015).

\bibitem[Gre70]{Greene} R.E. Greene, \textit{Isometric embeddings of Riemannian And pseudo-Riemannian manifolds},
Mem. Amer. Math. Soc., Providence, RI, 1970.

\bibitem[Gro70]{GR} M. Gromov and V. Rokhlin, \textit{Embeddings And immersions In Riemannian geometry},
Russ. Math. Surv., {\bf 25}(1970), no. 5, 1-57.

\bibitem[Gro80]{Gromov PDR} M. Gromov, \textit{Partial differential relations},
Springer-Verlag, 1980, 213.


\bibitem[G\"{u}n89]{Gunther} M. G\"{u}nther, \textit{Isometric embeddings of Riemannian manifolds},
Proceedings of the International Congress of Mathematicians, {\bf I, II}(1990), 1137-1143, Math. Soc. Japan, Tokyo (1991).


\bibitem[Isb59]{Isbell} J.R. Isbell, \textit{Embeddings of inverse limits},
Ann. of Math. (2), {\bf 70}(1959), 73-84.

\bibitem[Kra04]{Krat} S. Krat, \textit{Approximation problems in length geometry},
Ph. D. Thesis, The Pennsylvania State University (2004).

\bibitem[Kui55]{Kuiper} N. Kuiper, \textit{On $C^1$-isometric imbeddings},
Indag. Math., {\bf 17}(1955), 545-556.

\bibitem[LeD12]{Le Donne} E. Le Donne, \textit{Lipschitz and path isometric embeddings of metric spaces},
Geom Dedicata, {\bf 166}(2013), 47-66.

\bibitem[M16]{Minemyer4} B. Minemyer, \textit{Approximating continuous maps by isometries},
preprint, arXiv:  1508.00435.

\bibitem[Min16]{Minemyer Geoghegan} B. Minemyer, \textit{Intrinsic geometry of a Euclidean simplex}, 
accepted to the conference proceedings `Topological Methods in Group Theory:  A Conference in Honor of Ross Geoghegan's 70th Birthday'.

\bibitem[Min15]{Minemyer1} B. Minemyer, \textit{Isometric embeddings of polyhedra into Euclidean space},
J. Topol. Anal., {\bf 7}(2015), no. 4, 677-692.

\bibitem[Mi16]{Minemyer2} B. Minemyer, \textit{Simplicial isometric embeddings of polyhedra},
preprint, arXiv: 1211.0584.



\bibitem[Nag70]{Nagami} K. Nagami, \textit{Dimension theory}, 
Academic Press, New York and London, 1970.

\bibitem[Nag83]{Nagata} J. Nagata, \textit{Modern dimension theory}, 
Heldermann Verlag Berlin, 1983.


\bibitem[Nas54]{Nash1} J. Nash, \textit{$C^1$ isometric imbeddings},
Ann. of Math. (2), {\bf 60}(1954), 383-396.

\bibitem[Nas56]{Nash2} J. Nash, \textit{The imbedding problem for Riemannian manifolds},
Ann. of Math. (2), {\bf 63}(1956), 20-63.

\bibitem[OS94]{OS} Y. Otsu and T. Shioya, \textit{The Riemannian structure of Alexandrov spaces},
J. Differential Geom., {\bf 39}(1994), 629-658.

\bibitem[Pet11]{Petrunin} A. Petrunin, \textit{On intrinsic isometries to Euclidean space},
St. Petersburg Math. J., {\bf 22}(2011), no. 5, 803-812




\bibitem[Riv03]{Rivin} I. Rivin, \textit{Some observations on the simplex},
preprint, arXiv: 0308239.

\bibitem[Sal12]{Salviano} C. Salviano Veiga, \textit{The index space of a geodesic},
Ph. D. Thesis, Binghamton University (2012).

\bibitem[Zal58]{Zalgaller} V.A. Zalgaller, \textit{Isometric imbedding of polyhedra},
Dokl. Akad. Nauk SSSR (in Russian), {\bf 123}(1958), no. 4, 599-601.

\end{thebibliography}
\end{document}